  \newcommand{\calC}{\mathcal{C}}
  \newcommand{\calF}{\mathcal{F}}
  \newcommand{\calG}{\mathcal{G}}
  \newcommand{\calL}{\mathcal{L}}
  \newcommand{\calM}{\mathcal{M}}
  \newcommand{\calP}{\mathcal{P}}
  \newcommand{\calQ}{\mathcal{Q}}
  \newcommand{\calT}{\mathcal{T}}
  \newcommand{\RR}{\mathbb{R}}
  \newcommand{\ZZ}{\mathbb{Z}}
  \newcommand{\gothic}{\mathfrak}
  \newcommand{\Ga}{{\gothic a}}
  \newcommand{\Gb}{{\gothic b}}
  \newtheorem{theorem}{Theorem}[section]
  \newtheorem{proposition}[theorem]{Proposition}
  \newtheorem{corollary}[theorem]{Corollary}
  \newtheorem{lemma}[theorem]{Lemma}
  \theoremstyle{definition}
  \newtheorem{definition}[theorem]{Definition}
  \newtheorem{claim}[theorem]{Claim}
  \newtheorem*{claim*}{Claim}
  \newtheorem*{question*}{Question}
  \newtheorem*{answer*}{Answer}
  \newtheorem*{application*}{Application}
  \theoremstyle{remark}
  \newtheorem{remark}[theorem]{Remark}
  \newtheorem*{remark*}{Remark}
  \newcommand{\secref}[1]{Section~\ref{Sec:#1}}
  \newcommand{\thmref}[1]{Theorem~\ref{Thm:#1}}
  \newcommand{\corref}[1]{Corollary~\ref{Cor:#1}}
  \newcommand{\lemref}[1]{Lemma~\ref{Lem:#1}}
  \newcommand{\propref}[1]{Proposition~\ref{Prop:#1}}
  \newcommand{\figref}[1]{Figure~\ref{Fig:#1}}
  \newcommand{\defref}[1]{Definition~\ref{Def:#1}}
  \newcommand{\eqnref}[1]{Equation~\eqref{Eq:#1}}
  \DeclareMathOperator{\twist}{twist}
  \DeclareMathOperator{\width}{width}
  \DeclareMathOperator{\Hyp}{Hyp}
  \DeclareMathOperator{\I}{i}
  \newcommand{\emul}{\stackrel{{}_\ast}{\asymp}}
  \newcommand{\gmul}{\stackrel{{}_\ast}{\succ}}
  \newcommand{\lmul}{\stackrel{{}_\ast}{\prec}}
  \newcommand{\eadd}{\stackrel{{}_+}{\asymp}}
  \newcommand{\gadd}{\stackrel{{}_+}{\succ}}
  \newcommand{\ladd}{\stackrel{{}_+}{\prec}}
  \newcommand{\T}{\ensuremath{\mathcal{T}}\xspace} 
  \newcommand{\ML}{\ensuremath{\mathcal{ML}}\xspace} 
  \newcommand{\EL}{\ensuremath{\mathcal{EL}}\xspace} 
  \newcommand{\PML}{\ensuremath{\mathcal{PML}}\xspace}  
  \newcommand{\Teich}{{Teichm\"uller }} 
  \newcommand{\K}{{\sf K}}
  \newcommand{\param}{{\mathchoice{\mkern1mu\mbox{\raise2.2pt\hbox{$
  \centerdot$}}
  \mkern1mu}{\mkern1mu\mbox{\raise2.2pt\hbox{$\centerdot$}}\mkern1mu}{
  \mkern1.5mu\centerdot\mkern1.5mu}{\mkern1.5mu\centerdot\mkern1.5mu}}}
  \newcommand{\from}{\colon\thinspace} 
  \newcommand{\ep}{\epsilon}
  \newcommand{\dT}{{d_T}}
  \newcommand{\bnu}{{\overline \nu}}    
  \newcommand{\ba}{{\overline a}}
  \newcommand{\bb}{{\overline b}}
  \newcommand{\ua}{{\underline a}}
  \newcommand{\ub}{{\underline b}}
\begin{document}


  \title[Limit sets of Teichm\"uller geodesics]   
  {Limit sets of Teichm\"uller geodesics with
  minimal non-uniquely ergodic vertical foliation.}
  \author{Christopher Leininger} 
  \author{Anna Lenzhen}
  \author   {Kasra Rafi}
   \thanks{The first author was partially supported by NSF grant DMS-1207183 and also acknowledges support from NSF grants DMS 1107452, 1107263, 1107367 ``RNMS: Geometric structures And Representation varieties" (the GEAR Network). The second author was partially supported by ANR grant GeoDyM}
  \date{\today}
  \subjclass[2010]{57M50 (32G15, 37D40, 37A25)}

\begin{abstract} 
  We describe a method for constructing \Teich geodesics where 
  the vertical foliation $\nu$ is minimal but is not 
  uniquely ergodic and where we have a good understanding
  of the behavior of the \Teich geodesic.  The construction depends
  on various parameters, and we show that
  one can adjust the parameters to ensure
  that the set of accumulation points of such a geodesic in the 
  Thurston boundary is exactly the projective $1$--simplex of all projective measured 
  foliations that are topologically equivalent to $\nu$. With further adjustment
  of the parameters, one can further assume that the transverse measure is an ergodic measure on the non-uniquely ergodic foliation $\nu$.
\end{abstract}
  
  \maketitle
  

\section{Introduction}

In this paper, we describe a family of geodesic laminations on a surface. These are 
limits of explicit sequences of simple closed curves on the surface that
form a quasi-geodesic in the curve complex and hence,
by a theorem of Klarreich \cite{klarreich:bc}, are minimal, filling
and measurable.  Analogous to continued fraction coefficients associated to 
an irrational real number, any lamination $\nu$ in our family has an associated infinite 
sequence of positive integers $\{r_i\}$  
that can be chosen essentially arbitrarily and encode the \emph{arithmetic properties} 
of $\nu$.  One important consequence of the explicit nature of our construction is that 
we can adjust the values of $\{r_i\}$ to produce different interesting examples. 
In particular, we show that if the sequence $r_i$ grows fast enough then
the lamination $\nu$ is not uniquely ergodic. 

Although our construction is general in spirit, we carry out detailed computations 
in the case of the five-times punctured sphere. This case is already 
rich enough for us to observe some interesting phenomena. 
Our lamination $\nu$ is a limit of a sequence of simple closed curves $\gamma_i$
defined as follows.  In \secref{Construction} we describe a finite order homeomorphism $\rho$ of a five-times punctured sphere $S$ and a particular Dehn twist $D$.  We then set $\phi_r = D^r \circ \rho$ and define
\[
\Phi_i = \phi_{r_1} \circ \ldots \circ \phi_{r_i}
\qquad\text{and}\qquad \gamma_i = \Phi_i(\gamma_0),
\]
(see \secref{Construction} for detailed description). 

\begin{theorem}\label{Thm:NUE}
There exists $R >0$ so that if the powers $r_i$ are larger than $R$, then the path
$\{\gamma_i\}$ is a quasi-geodesic in the curve complex
and hence the limiting lamination exists, is minimal and filling.
Furthermore, if $r_i$ grow fast enough---specifically if
$r_i \leq \epsilon r_{i+1}$ for some $\epsilon < \tfrac12$ and all $i \geq 0$---then the limiting lamination $\nu$ 
is not uniquely ergodic. 
\end{theorem}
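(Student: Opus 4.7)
The plan splits naturally into the two claims. For the quasi-geodesic statement I would verify a standard subsurface projection criterion. Set $\alpha_i := \Phi_{i-1}(\alpha)$, where $\alpha$ is the core of the Dehn twist $D$. Using $\Phi_i = \Phi_{i-1} \circ D^{r_i} \circ \rho$ together with the conjugation identity $\Phi_{i-1} D^{r_i} \Phi_{i-1}^{-1} = D^{r_i}_{\alpha_i}$, one rewrites $\gamma_i = D^{r_i}_{\alpha_i}(\Phi_{i-1}\rho(\gamma_0))$. Hence the annular subsurface projection distance $d_{\alpha_i}(\gamma_{i-1}, \gamma_i)$ equals $r_i$ up to an additive constant depending only on $\alpha$, $\rho$, and $\gamma_0$. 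Choosing $R$ larger than the Masur--Minsky subsurface-projection threshold $M$, the sequence $\{\gamma_i\}$ then satisfies the hypotheses of a standard Behrstock-type argument: the bounded geodesic image theorem forces any $\calC(S)$--geodesic from $\gamma_i$ to $\gamma_j$ (for $i<j$) to pass close to each intermediate $\alpha_k$, which together with the fact that consecutive $\gamma_i$'s are at bounded $\calC(S)$--distance implies $\{\gamma_i\}$ is a quasi-geodesic. Klarreich's theorem then delivers the endpoint as a minimal, filling lamination $\nu$.

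For the non-unique ergodicity statement, the plan is to exhibit two projectively distinct measures on $\nu$. For each $k \geq 1$, the shifted parameters $(r_k, r_{k+1}, \ldots)$ still satisfy $r_i > R$, so applying the quasi-geodesic argument to the tail sequence $\nu^{(k)}_N := \phi_{r_k}\circ \cdots \circ \phi_{r_N}(\gamma_0)$ produces a minimal filling lamination $\nu^{(k)}$, satisfying $\Phi_{k-1}(\nu^{(k)}) = \nu$ as topological laminations. Any $\PML$--subsequential limit of the normalized curves $\nu^{(k)}_N$ has support $\nu^{(k)}$; pushing forward by $\Phi_{k-1}$ yields a transverse measure on $\nu$. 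The goal then reduces to showing that these pushforward measures are not all projectively equal as $k$ varies.

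The distinguishing calculation rests on the elementary intersection-number identity
\[
i(D^r(\beta), \tau) = r\, i(\beta,\alpha)\, i(\alpha,\tau) + i(\beta,\tau) + O(1),
\]
iterated through the product $\Phi_N$. The condition $r_i \leq \epsilon r_{i+1}$ with $\epsilon < \tfrac12$ is designed so that the telescoping expansion of $i(\nu^{(k)}_N, \tau)$ in terms of the $r_i$ has tails controlled by a geometric series of ratio below $1/2$, and the dominant contribution at step $N$ depends on the starting index $k$; the resulting pushforward measures on $\nu$ therefore carry genuinely different weight profiles for different $k$, yielding non-unique ergodicity. The main obstacle is precisely this bookkeeping, since the error at each iteration is only a factor $\epsilon$ smaller than the leading term. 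This is most cleanly organized by carrying each $\gamma_i$ on an explicit train track whose splitting matrices record the $r_i$, with non-unique ergodicity becoming the statement that the associated infinite matrix product has a cone of invariant transverse measures of dimension at least two. The bound $\epsilon < \tfrac12$ is expected to emerge as precisely the threshold needed to keep this cone non-degenerate.
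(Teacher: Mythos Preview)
Your overall strategy for the quasi-geodesic claim is the same as the paper's---large annular projections plus the Bounded Geodesic Image Theorem---but the specific computation is broken. With $\alpha$ the core of $D$ (so $\alpha=\gamma_2$), one has $\alpha_i=\Phi_{i-1}(\gamma_2)=\gamma_{i+1}$, while $\Phi_{i-1}\rho(\gamma_0)=\Phi_{i-1}(\gamma_1)=\gamma_i$. Since $\gamma_i$ and $\gamma_{i+1}$ are disjoint, the twist $D^{r_i}_{\alpha_i}$ acts trivially on $\gamma_i$, and the projection of $\gamma_i$ to the $\alpha_i$--annulus is empty; the quantity $d_{\alpha_i}(\gamma_{i-1},\gamma_i)$ is therefore undefined. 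The paper instead proves $\twist_{\gamma_h}(\gamma_{h-2},\gamma_{h+2})=r_{h-1}$ (the curves two steps away on either side are the first ones that actually cross $\gamma_h$), and then bootstraps this by induction and the triangle inequality to control $\twist_{\gamma_h}(\gamma_j,\gamma_k)$ for all $j+2\le h\le k-2$. You would need the same index shift.

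For non-unique ergodicity, the tail-shift idea does not separate measures. Since $\Phi_{k-1}$ is a homeomorphism, push-forward by $\Phi_{k-1}$ is a bijection between measures on $\nu^{(k)}$ and measures on $\nu$; any subsequential $\PML$--limit of $\{\nu^{(k)}_N\}_N$ pushes forward to the corresponding subsequential limit of $\{\gamma_N\}_N$, so varying $k$ recovers exactly the same collection of measures you would see without the shift. What actually distinguishes two measures in the paper is parity: the train-track transition matrix $M_r$ has a block structure which, after pairing consecutive factors into $N_i=\tfrac{1}{2r_{i+1}}M_{r_i}M_{r_{i+1}}$, forces the normalized products $N_jN_{j+2}\cdots N_{j+2\ell}$ to converge, and the even subsequence $\{\gamma_{2k}\}$ and odd subsequence $\{\gamma_{2k+1}\}$ converge in $\ML(S)$ to measures $\bnu_\alpha,\bnu_\beta$ that are shown to be projectively distinct via the intersection estimates $\I(\mu,\alpha_{i+1})\I(\alpha_i,\bnu_\alpha)\emul 1$ while $\I(\mu,\alpha_{i+1})\I(\alpha_i,\bnu_\beta)\to 0$ (and symmetrically). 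The growth hypothesis $r_i\le\epsilon r_{i+1}$ with $\epsilon<\tfrac12$ enters exactly here, making $2\epsilon<1$ so that the relevant geometric series converge. Your closing remark that the train-track matrix product is the right framework is correct, but the mechanism is the even/odd split rather than the tail index $k$.
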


We are interested in understanding a  \Teich geodesic
where $\nu$ is topologically equivalent to the vertical foliation of
the associated quadratic differential. (Recall that there is a 
one-to-one correspondence between measured laminations and 
singular measured foliations; see Section \ref{S:MLMF}).  
Let $\Delta(\nu) \subset \PML(S)$ be the simplex of all possible 
projectivized measures on $\nu$. In the case of the five-times punctured 
sphere, if $\nu$ is minimal and not uniquely ergodic then $\Delta(\nu)$
is one dimensional, that is, it is homeomorphic to an interval. 
The endpoints of this interval are projective classes associated to ergodic
measures on $\nu$, denoted $\bnu_\alpha$ and
$\bnu_\beta$. Every other measure takes the form 
$\bnu = c_\alpha \bnu_\alpha + c_\beta \bnu_\beta$ for positive 
real numbers $c_\alpha$ and $c_\beta$. Note that the projective class of 
$\bnu$ depends only on the ratio of $c_\alpha$ and $c_\beta$.

Now fix any point $X$ in \Teich space. By a theorem
of Hubbard-Masur \cite{masur:QDF}, there is a unique \Teich geodesic 
\[g = g (X, \bnu) \from [0, \infty) \to \T(S)\]
starting from $X$ so that the vertical foliation associated to $g$ is 
in the projective class of $\bnu$.
We examine the limit set $\Lambda(g)$ of this geodesic in the Thurston 
boundary of \Teich space. 

By appealing to the results of the third author in \cite{rafi:SC, rafi:HT}, we 
can determine how the coefficients $\{r_i\}$ effect the behavior of this \Teich 
geodesic. In particular, at any time $t$, we can describe the geometry of the 
surface $g(t)$ using the numbers $r_i$.  As a result, we can control the limit 
set of $g$.

\begin{theorem}\label{Thm:intro}
Let $\bnu = c_\alpha \bnu_\alpha + c_\beta \bnu_\beta$ 
and $g = g(X, \bnu)$ be as above. If $\{r_i\}$ satisfies
certain growth conditions, (see \defref{Growth} where
$n_i = r_{2i-1}$ and $m_i = r_{2i}$) then 
\[\Lambda(g) = \Delta(\nu),\]
for any value of $c_\alpha$ and $c_\beta$.
\end{theorem}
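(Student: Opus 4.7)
The plan is to establish both inclusions $\Lambda(g) \subseteq \Delta(\nu)$ and $\Delta(\nu) \subseteq \Lambda(g)$, using the short-curve and thick-thin analyses of \cite{rafi:SC, rafi:HT} applied to the explicit combinatorial data $\{r_i\}$. From the construction behind \thmref{NUE}, two natural subsequences of $\{\gamma_i\}$ converge projectively to the two ergodic measures spanning $\Delta(\nu)$; for concreteness, say $\gamma_{2k-1} \to \bnu_\alpha$ and $\gamma_{2k} \to \bnu_\beta$ in $\PML$.

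For the forward inclusion, I would first invoke the results of \cite{rafi:SC} to show that along $g$ the curves that ever become short are precisely a cofinal subsequence of the $\gamma_i$, each short on a bounded time interval $I_i$, with $\bigcup_i I_i \supseteq [T_0,\infty)$ for some $T_0$. Given an accumulation point $\overline\mu = \lim_n g(t_n)$ in the Thurston boundary, after passing to a subsequence we have $t_n \in I_{j(n)}$ with $j(n) \to \infty$, and after a further subsequence we may assume $j(n)$ has constant parity, so that $\gamma_{j(n)} \to \bnu$ projectively with $\bnu \in \{\bnu_\alpha, \bnu_\beta\}$. Since $\ell_{g(t_n)}(\gamma_{j(n)})$ stays bounded above by the Margulis constant, the standard continuity-of-intersection argument for Thurston compactification yields $i(\bnu, \overline\mu) = 0$. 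Since $\nu$ is minimal and filling, any measured lamination with zero intersection with an ergodic measure on $\nu$ is itself supported on $\nu$, so $\overline\mu \in \Delta(\nu)$.

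For the reverse inclusion, fix a target $\overline\mu = c_\alpha \bnu_\alpha + c_\beta \bnu_\beta$ with $c_\alpha, c_\beta > 0$. The growth conditions of \defref{Growth} are arranged precisely so that for each large $k$ the intervals $I_{2k-1}$ and $I_{2k}$ overlap in a subinterval $J_k$ on which both $\gamma_{2k-1}$ and $\gamma_{2k}$ are short on $g(t)$. By the thick-thin description of \cite{rafi:HT}, for $t \in J_k$ the point $g(t)$ is close in $\PML$ to a weighted sum of $\gamma_{2k-1}$ and $\gamma_{2k}$, with the projective class determined up to bounded error by the ratio of the inverse extremal lengths of the two short curves. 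As $t$ sweeps $J_k$ this ratio varies continuously and monotonically from near $0$ to near $\infty$, so by the intermediate value theorem one can pick $t_k \in J_k$ at which the projective class in question approximates that of $c_\alpha \gamma_{2k-1} + c_\beta \gamma_{2k}$ to within $1/k$. Since $\gamma_{2k-1} \to \bnu_\alpha$ and $\gamma_{2k} \to \bnu_\beta$ projectively, it follows that $g(t_k) \to c_\alpha \bnu_\alpha + c_\beta \bnu_\beta = \overline\mu$ as $k \to \infty$.

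The hard part will be making the approximation in the reverse inclusion quantitatively precise, i.e.\ showing that the contributions of thick subsurfaces and of non-currently-short $\gamma_j$ to the length function of $g(t)$ are negligible compared with those of $\gamma_{2k-1}$ and $\gamma_{2k}$, uniformly in $t \in J_k$ as $k \to \infty$. The growth conditions of \defref{Growth} are designed precisely to enforce this separation. Although the intervals $I_i$ and $J_k$ themselves depend on the initial measure $\bnu$, their overlap pattern and the sweep of ratios through $(0,\infty)$ persist for every $(c_\alpha, c_\beta)$, yielding the claimed independence of $\Lambda(g)$ from the starting measure.
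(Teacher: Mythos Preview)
Your approach diverges from the paper's in a way that creates a genuine gap in the reverse inclusion.

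For the forward inclusion $\Lambda(g)\subset\Delta(\nu)$, this is simply a known fact about Teichm\"uller rays with minimal vertical foliation (the paper cites it as such in the introduction), so there is nothing to prove here.

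For the reverse inclusion, the paper does \emph{not} try to hit each interior point of $\Delta(\nu)$ directly. Instead it proves only that the two \emph{endpoints} $[\bnu_\alpha]$ and $[\bnu_\beta]$ lie in $\Lambda(g)$, by exhibiting explicit sequences of times (namely $\ba_i$ or $a_{i,i+1}$, depending on whether $c_\alpha>0$ or $c_\alpha=0$) along which $X_t\to[\bnu_\alpha]$ in $\PML(S)$, and symmetrically for $\bnu_\beta$. Since the limit set of a ray in the Thurston compactification is connected, and is contained in the one-dimensional simplex $\Delta(\nu)$, it must then be all of $\Delta(\nu)$. This connectedness argument is the organizing idea you are missing; it makes the intermediate value step entirely unnecessary.

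Your IVT argument, by contrast, has a real gap. The claim that ``for $t\in J_k$ the point $g(t)$ is close in $\PML$ to a weighted sum of $\gamma_{2k-1}$ and $\gamma_{2k}$, with the projective class determined by the ratio of the inverse extremal lengths'' is not something that follows from \cite{rafi:HT}: that paper describes which curves are short along $g$, not where $g(t)$ sits in the Thurston boundary. Passing from short-curve data to location in $\PML(S)$ requires estimating $\Hyp_t(\gamma)$ for \emph{arbitrary} curves $\gamma$, which is exactly the content of the length-contribution formula (Theorem~\ref{T:hyperbolic-contribution}) together with the twisting estimates (\lemref{twisting}) and the careful computations in the proof of Theorem~\ref{limit}. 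Even granting those, the reason the paper's estimates yield an actual limit rather than a limit up to bounded multiplicative error is that at the chosen times \emph{one} contribution dominates, so the curve-independent factor cancels in the ratio $\Hyp_t(\gamma)/\Hyp_t(\gamma')$. At a generic time in your overlap interval, where both contributions are significant, the $O(1)$ ambiguities in the twisting do not cancel, and it is not clear that you can pin down convergence to a prescribed interior point. The monotonicity assertion is also unproven.
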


Note that, in particular, even when $\bnu= \bnu_\alpha$
is an ergodic measure, the limit set still includes the other ergodic measure
$\bnu_\beta$. This contrasts with the work of Lenzhen-Masur, where they
show that the geodesics $g_\alpha = g(X, \bnu_\alpha)$ and $g_\beta = g(X, \bnu_\beta)$
diverge in \Teich space, that is the distance 
in \Teich space between $g_\alpha(t)$ and $g_\beta(t)$ goes to 
infinity as $t \to \infty$ \cite{lenzhen:CD}.


\subsection*{History and related results}
The existence of minimal, filling and non-uniquely ergodic foliations
has been known for a long time due to work of Keane \cite{keane:NI}, 
Sataev \cite{sataev:NIM}, Keynes-Newton \cite{newton:NUE},
Veech \cite{veech:SE} and Masur-Tabachnikov \cite{masur:RB}. 
These constructions generally use a  flat structure on a surface where the 
associated measured foliation in certain directions are non-uniquely ergodic. 
In fact, much in known about the prevalence of such foliations
(see for example results by Masur \cite{masur:HD}, 
Masure-Smillie \cite{masur:HDN}, Cheung \cite{cheung:HD},  
Cheung-Masur \cite{masur:MN}, Cheung-Hubert-Masur \cite{masur:TD}, and Athreya-Chaika \cite{athreya-chaika:HD} 
about the Hausdorff dimension of the set non-uniquely ergodic foliations in 
various contexts) and the divergence rate of a \Teich geodesic with
a non-uniquely egodic vertical foliation (see Masur \cite{masur:HD}, Cheung-Eskin \cite{eskin:UE}, Trevi\~no \cite{trevino:RATE} ). In contrast, our construction is topological
and is similar to Gabai's construction of non-uniquely ergodic 
arational measured foliations \cite{gabai:AF}, though our analysis is quite different from Gabai's.  A similar construction was also known
to Brock \cite{brock:CF} who studied the Weil-Petersson geodesics associated 
$\nu$. In \cite{brockmodami}, Brock and Modami show that such a geodesic provides the counter example for
a \emph{Masur criterion} in the setting of Weil-Petersson metric.  That is, even 
though the lamination is not uniquely ergodic, the Weil-Petersson geodesic
does not diverge in the moduli space. 

The limit set of a Teichm\"uller ray in the Thurston boundary has also been
studied before. The question is non trivial since a  \Teich ray is defined by 
deforming a flat structure, while Thurston compactification is defined via hyperbolic 
geometry, that is, one needs to know how the hyperbolic metric changes along
the ray to be able to say something about its limit set. 

From the work of Masur (\cite{masur:TB}, 1982), we know that in most directions
 geodesic rays converge to a point in the Thurston boundary. More precisely, if 
 the associated vertical measured foliation is uniquely ergodic, that is, has no closed 
 leaves and supports a unique up to scaling invariant measure, then
 the ray converges to the projective class of this foliation. Another case that is 
 completely solved in  \cite{masur:TB} is when the vertical measured foliation $\bnu$ determining 
 the ray is Strebel, that is when the complement of singular leaves is a union of 
 cylinders. Any such ray 
 converges to a point, which is homeomorphic to $\bnu$, but the transverse measure 
 is such that all cylinders have equal heights.
  
The first example of a ray that does not converge to a unique point was given 
by the second author in 
\cite{lenzhen:GL} for a genus $2$ surface. The associated vertical foliation 
in that example has 
a closed singular leaf and two minimal components. In particular, the vertical 
foliation is not minimal. 

If a \Teich ray is defined by a vertical measured foliation $\bnu$ which is minimal, it is known that
any point in its limit set is topologically equivalent to $\bnu$, that is
$\Lambda(g) \subset \Delta(\nu)$.  Until now,
nothing was known about how big the limit set can be in the case of minimal,
filling laminations. Chaika-Masur-Wolf have also constructed examples
where the limit set of a ray can be more than one point \cite{chaikamasurwolf:IP}.
They consider the Veech example of two tori glued along a slit with irrational length 
that gives non-ergodic examples in various directions and investigate the limit in 
$\PML(S)$. They produce both ergodic and nonergodic examples where the limit set is an interval and such examples where the limit set is a point.  Moreover, in the latter case they identify which point it is---either the ergodic measure or the barycenter.

\subsection{Notation}
Let $X$ be a Riemann surface. We denote the hyperbolic length of
a curve $\alpha$ in $X$ by $\Hyp_X(\alpha)$. A quadratic differential 
on $X$ is denoted by $(X, q)$. The flat length of $\alpha$ on $(X,q)$ is denoted
by $\ell_q(\alpha)$. A \Teich geodesic ray $g \from \RR_+ \to \T(S)$ defines 
a one-parameter family of quadratic differentials $(X_t,q_t$). To refer to the hyperbolic 
length and the flat length of a curve $\alpha$ at time $t$, we often use the notation 
$\Hyp_t(\alpha)$ and $\ell_t(\alpha)$ instead of $\Hyp_{X_t}(\alpha)$
and $\ell_{q_t}(\alpha)$ respectively. 

The notation $\emul$ means equal up to multiplicative error and $\eadd$ means 
equal up to an additive error with uniform constants. For example
\[
\Ga  \emul \Gb  \quad \Longleftrightarrow\quad
\frac{\Gb}K \leq \Ga \leq K \, \Gb, \qquad
\text{for a uniform constant $K$}. 
\]
When $\Ga \emul \Gb$, we say $\Ga$ and $\Gb$ are \emph{comparable}. 
The notations $\gmul$ and $\gadd$ are similarly defined. 

\subsection{Acknowledgements}  The authors would like to thank Howard Masur and Jeffrey Brock for useful conversations related to this work.  They would also like to thank the referee for carefully reading the paper and providing many useful suggestions.

\section{Background}

\subsection{ \Teich space}
Let $S$ be the five-times punctured sphere. 
 The \Teich space $\calT(S)$ is the space of marked conformal structures on $S$
  up to isotopy.  Via uniformization, $T(S)$ can be viewed as a space of finite area, complete, hyperbolic
  metrics up to isotopy.
  
  In this paper, we consider  \Teich metric on $\calT(S)$. 
  Given  $x, y \in \T$, the \emph{\Teich distance}
  between them is defined to be \[ \dT(x,y) = \frac{1}{2} \inf_f \log
  K(f),\] where $f \from x \to y$ is a $K(f)$--quasi-conformal homeomorphism
  preserving the marking. (See \cite{gardiner:QT} and \cite{hubbard:TT} for
  background information.) Geodesics in this metric are called \Teich geodesics.
  We will briefly describe them.
  
 Let $X\in\calT(S)$ and $q=q(z)dz^2$ be a quadratic differential on $X$. There
 exists a  \emph{natural parameter} $\zeta=\xi+i\eta$, 
which is defined away from its singularities as
$$\zeta(w)=\int_{z_0}^{w}\sqrt{q(z)}\, dz.$$
In these coordinates, we have $q=d\zeta^2$. 
The lines $\xi=const$ with transverse measure $|d\xi|$ define the 
\textit{vertical} measured foliation, associated to $q$. Similarly, 
the \textit{horizontal} measured foliation is defined by $\eta=const$  and 
$|d\eta|$.  The transverse measure of an arc $\alpha$ with respect to 
$|d\xi|$, denoted by $h_q(\alpha)$, is called the \textit{horizontal length} of 
$\alpha$. Similarly, the \textit{vertical length} $v_q(\alpha)$ is the measure 
of $\alpha$ with respect to $|d\eta|$.

A \Teich geodesic can be described as follows. Given a Riemann surface
$X_0$ and a quadratic differential $q$ on $X_0$, we can obtain a
$1$--parameter family of quadratic differentials $q_t$ from $q$ 
so that, for $t\in \mathbb R$, if $\zeta=\xi+i\eta$ are natural coordinates for $q$, 
then $\zeta_t=e^{t} \xi+i e^{-t} \eta$ are natural coordinates for $q_t$. 
Let  $X_t$ be the conformal structure associated to $q_t$. Then 
$\calG:\mathbb R\to \calT(S)$ which sends $t$ to $X_t$, is a \Teich geodesic.

As mentioned above, a quadratic differential $q$ on a Riemann surface $X$ 
defines a  pair or transverse measured foliations. In fact, by a theorem of J. Hubbard 
and H. Masur \cite{masur:QDF}, $q$ is defined uniquely by $X$ and its vertical
measured foliation. More precisely, for any $X\in \calT(S)$ and
a measured foliation $\bnu$ on $S$, there is a unique quadratic differential $q$
on $X$ whose vertical foliation is measure equivalent to (a positive real multiple of) $\bnu$.

\subsection{Curves and markings}
A simple closed curve on $S$ is \textit{essential} if it does not bound a disk 
or a punctured disk on $S$. The free homotopy class of an essential 
simple closed curve will be called a \textit{curve}. Given
two  curves $\alpha$ and $\beta$, we denote by $\I(\alpha,\beta)$ the minimal
intersection number between the representatives of $\alpha$ and $\beta$. 
Let $A$ be a collection of curves in $S$. We say that $A$ is \textit{filling} 
 if for any curve $\beta$ in $S$,  $\I(\alpha,\beta)>0$ for some $\alpha\in A$.
 
A \textit{pants decomposition} of $S$ is a set $\calP$ consisting of pairwise 
disjoint curves. A \textit{marking} of $S$ is a pants decomposition $\calP$ together 
with a set $\calQ$ of transverse curves defined in a following way: for each
$\alpha\in \calP$ there is exactly one curve $\beta \in \calQ$ that intersects  $\alpha$ minimally and is disjoint from any other curve in $\calP$.  We will refer to the marking by its underlying set of curves $\mu = \calP\cup\calQ$. The intersection number of a marking $\mu$ with a curve $\alpha$ is defined to be the sum $\I(\mu,\alpha) = \sum_{\gamma \in \mu} \I(\gamma,\alpha)$.

\subsection{Curve graph}   
The \emph{curve graph} $\calC(S)$ of $S$ is a graph whose vertex set
$\calC_0(S)$ is the set of curves on $S$, and where two vertices are connected by 
an edge if the corresponding curves are disjoint, that is, have zero intersection number. 
This graph is the $1$--skeleton of the curve complex introduced by Harvey
\cite{harvey:BS}. Let $d_S$ be the path metric on $\calC(S)$ 
with unit length edges.  
\begin{theorem}[\cite{minsky:CCI}]
There exists $\delta\geq 0$ such that $\calC(S)$ is $\delta$-- hyperbolic.
\end{theorem}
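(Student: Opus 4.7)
The plan is to follow the original Masur--Minsky strategy: between each pair of vertices in $\calC(S)$ build a distinguished path using a \Teich geodesic, prove that these paths admit a uniform ``contracting projection'' from all of $\calC_0(S)$, and then invoke a standard criterion to deduce Gromov hyperbolicity.

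\emph{Building the path.} Given $\alpha,\beta\in\calC_0(S)$, by Hubbard--Masur there is a quadratic differential $(X,q)$ whose vertical and horizontal foliations are $\alpha$ and $\beta$; the associated \Teich geodesic $g\from[a,b]\to\calT(S)$ interpolates between a surface on which $\alpha$ is short and one on which $\beta$ is short. Discretize $[a,b]$ at unit scale and, at each $t_i$, pick a curve $\sigma_i\in\calC_0(S)$ that is essentially shortest on $X_{t_i}$ (one can use either the hyperbolic or the flat metric; the Bers constant and the collar lemma force both notions to agree up to bounded combinatorial distance). Since flat lengths change by bounded multiplicative factors over bounded \Teich time, $\sigma_i$ and $\sigma_{i+1}$ can always be realized disjointly and hence are joined by an edge of $\calC(S)$. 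This produces a path $\calL(\alpha,\beta)$ in $\calC(S)$ from (a neighbor of) $\alpha$ to (a neighbor of) $\beta$.

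\emph{Contraction and projections.} The heart of the proof is the following: for any $\gamma\in\calC_0(S)$, there is an (essentially unique) time $t_\gamma\in[a,b]$ at which $\ell_{q_t}(\gamma)$ is minimized, and the assignment $\gamma\mapsto\sigma_{i(\gamma)}$, where $t_{i(\gamma)}$ is the nearest discretization time to $t_\gamma$, is both coarsely well-defined and coarsely Lipschitz on $\calC_0(S)$. This is proved through flat-geometry estimates: if $\gamma$ were disjoint from $\sigma_i$ over a long subinterval of $[a,b]$, then comparing $\I(\gamma,\sigma_i)$ with the flat lengths $h_{q_t}(\gamma)$ and $v_{q_t}(\gamma)$ (whose product is essentially constant along $g$) would force $\gamma$ itself to be a candidate shortest curve across that subinterval, contradicting the essentially unique choice of $t_\gamma$ up to bounded error. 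A formal diagram chase then shows $\calL(\alpha,\beta)$ is a quasi-geodesic with uniform constants.

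\emph{Hyperbolicity criterion.} Finally, one invokes the criterion (proved in Masur--Minsky, and later reformulated by Bowditch) that a connected graph admitting, between every pair of vertices, a path with a coarsely well-defined and coarsely Lipschitz closest-point projection from the whole vertex set is Gromov hyperbolic with $\delta$ depending only on the projection constants. Applying this to $\calC(S)$ with the family $\calL(\alpha,\beta)$ yields the desired $\delta$.

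The main obstacle is the projection/contraction step. It requires a robust comparison between the combinatorial intersection $\I(\gamma,\sigma_i)$ and the flat geometry of $(X_{t_i},q_{t_i})$, and in particular demands delicate control in thin parts of $X_t$, where a short curve has a long annular neighborhood in which other curves can twist arbitrarily many times without changing their combinatorial position in $\calC(S)$ by much, but while inflating flat lengths dramatically. Once this estimate is in hand, both the quasi-geodesic property of the $\calL(\alpha,\beta)$ and the final application of the hyperbolicity criterion are formal.
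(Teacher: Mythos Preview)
The paper does not prove this statement at all: it is quoted as a background result from \cite{minsky:CCI} and is used without argument. Your outline is essentially the original Masur--Minsky strategy (nested-interval/contracting-projection via \Teich geodesics together with a thin-triangles criterion), so there is nothing to compare against in the paper itself.

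One small technical caveat in your sketch: the sentence ``by Hubbard--Masur there is a quadratic differential $(X,q)$ whose vertical and horizontal foliations are $\alpha$ and $\beta$'' is not literally what Hubbard--Masur provides, and in any case only makes sense as stated when $\alpha$ and $\beta$ fill $S$; for nearby vertices one has to set up the family of paths a bit more carefully (as Masur--Minsky do). This does not affect the overall correctness of the plan.
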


\subsection{Relative twisting}
Let $\alpha$, $\beta$ and $\gamma$ be curves where $\alpha$ intersects both 
$\beta$ and $\gamma$. We define the relative twisting of $\beta$ and $\gamma$ 
around $\alpha$, denoted by  $\twist_\alpha(\beta, \gamma)$, as follows. 
Let $S_\alpha$ be the annular cover of $S$ associated to $\alpha$.
The $S_\alpha$ has a well defined boundary at infinity and can be considered
as a compact annulus. Let $\tilde \beta$ and $\tilde \gamma$
be lifts of $\beta$ and $\gamma$ to $S_\alpha$, respectively, that
are not boundary parallel. Assume $\beta$ and $\gamma$ intersect minimally 
in their free homotopy class. Then 
\[
\twist_\alpha(\beta, \gamma) = \I(\tilde \beta, \tilde \gamma).
\]
We can also define $\twist_\alpha(X, \beta)$ for a point $X \in \T(S)$. 
Consider the annular cover  $X_\alpha$ of $X$ and let $\tau_X$
be the arc in $X_\alpha$ that is perpendicular to $\alpha$. We define
\[
\twist_\alpha(X, \beta) = \twist_\alpha(\tau_X, \beta).
\]
 
The following is a straight-forward consequence of the Bounded Geodesic Image Theorem \cite[Theorem 3.1]{minsky:CCII}.

\begin{theorem} \label{Thm:BGIT}
Given constants $\kappa, \lambda > 0$, there exists a constant $\K=\K(S)$ such 
that the following holds. Let $\{\gamma_i\}$ be the vertex set of a $1$--Lipschitz $(\kappa,\lambda)$--quasi-geodesic
in $\calC(S)$ and let $\alpha$ be a curve on $S$. If every $\gamma_i$ intersects 
$\alpha$, then for every $i$ and $j$, 
\[
\twist_\alpha (\gamma_i, \gamma_j)\leq \K.
\]
\end{theorem}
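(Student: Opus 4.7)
The plan is to deduce the statement from the geodesic version of the Bounded Geodesic Image Theorem \cite[Theorem~3.1]{minsky:CCII} applied to the annular subsurface with core $\alpha$, by replacing the quasi-geodesic $\{\gamma_i\}$ with a fellow-traveling geodesic in $\calC(S)$. Throughout, I identify $\twist_\alpha(\gamma_i,\gamma_j)$ up to an additive error with the distance $d_\alpha(\gamma_i,\gamma_j)$ between the annular projections $\pi_\alpha(\gamma_i),\pi_\alpha(\gamma_j)$ in the annular curve complex $\calC(\alpha)$, using the usual comparison between relative twisting and the annular projection distance.

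The first step is a Morse-type stability argument. Since $\calC(S)$ is $\delta$-hyperbolic and $\{\gamma_i\}$ is a $1$-Lipschitz $(\kappa,\lambda)$-quasi-geodesic, there is a constant $D=D(\delta,\kappa,\lambda)$ and, for each $i<j$, a geodesic $\sigma_{ij}$ in $\calC(S)$ from $\gamma_i$ to $\gamma_j$ whose Hausdorff distance from the subpath $\{\gamma_i,\ldots,\gamma_j\}$ is at most $D$. The second step is to note that the annular projection $\pi_\alpha$ is coarsely Lipschitz on the set of curves intersecting $\alpha$: if two such curves are disjoint, then their lifts to the annular cover $S_\alpha$ are disjoint arcs, so their intersection number (hence $d_\alpha$) is bounded by a universal constant, and iterating this along an edge-path yields a bound on $d_\alpha$ linear in $d_S$ for curves that continue to meet $\alpha$.

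With these tools, I split into two cases. If every vertex of $\sigma_{ij}$ intersects $\alpha$, the Bounded Geodesic Image Theorem gives $\diam_{\calC(\alpha)}\pi_\alpha(\sigma_{ij})\le M$, and the coarse-Lipschitz transfer from vertices of $\sigma_{ij}$ to nearby $\gamma_k$ (at $\calC(S)$-distance $\le D$, chosen to continue meeting $\alpha$) gives $d_\alpha(\gamma_i,\gamma_j)\le M+O(D)$. If some vertex of $\sigma_{ij}$ fails to intersect $\alpha$, then $\alpha$ is within $\calC(S)$-distance $1+D$ of some $\gamma_k$; I would split $\sigma_{ij}$ into the maximal subarcs whose vertices all intersect $\alpha$, apply BGIT to each, and bound the contribution of the short ``excursions'' into the $1$-neighborhood of $\alpha$ by the same coarse-Lipschitz estimate, again using that every $\gamma_k$ does intersect $\alpha$ so the quasi-geodesic itself never makes such an excursion.

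The main obstacle is the second case: handling excursions of the comparison geodesic $\sigma_{ij}$ into the $1$-neighborhood of $\alpha$ without losing control of $\pi_\alpha$. The key point is that, although $\sigma_{ij}$ may enter this neighborhood, the quasi-geodesic $\{\gamma_i\}$ stays outside it by hypothesis, so by stability the total length of $\sigma_{ij}$ spent near $\alpha$ is bounded by a function of $D$, which translates (via the coarse-Lipschitz property of $\pi_\alpha$) into a uniform bound on the additional projection displacement. Combining the two cases gives a uniform $\K=\K(S,\kappa,\lambda)$ bounding $\twist_\alpha(\gamma_i,\gamma_j)$.
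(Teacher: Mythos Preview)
The paper gives no proof of this statement; it simply records it as ``a straight-forward consequence of the Bounded Geodesic Image Theorem \cite[Theorem~3.1]{minsky:CCII}'' and moves on. Your proposal to deduce the quasi-geodesic version from the geodesic version via the Morse lemma is exactly the standard route, and your Case~1 is fine.

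Your Case~2, however, is imprecise in a way that matters. First, the reason the comparison geodesic $\sigma_{ij}$ spends bounded time in the $1$--neighborhood of $\alpha$ has nothing to do with stability: $\sigma_{ij}$ is a \emph{geodesic}, and the $1$--ball about any vertex has diameter~$2$, so the set of vertices of $\sigma_{ij}$ with $d_S(\cdot,\alpha)\le 1$ spans an interval of length at most~$2$. Second---and this is the real issue---you cannot bound $d_\alpha$ across that excursion ``by the same coarse-Lipschitz estimate'': the coarse-Lipschitz property of $\pi_\alpha$ only applies to pairs of adjacent curves that \emph{both} intersect $\alpha$, and by definition the vertices of the excursion do not. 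In fact, there exist curves $u,w$ both intersecting $\alpha$ with $d_S(u,w)=2$ (via a curve disjoint from $\alpha$) and $d_\alpha(u,w)$ arbitrarily large, so no Lipschitz bound along $\sigma_{ij}$ is available.

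The fix is to transfer the bound to the quasi-geodesic itself, where every vertex \emph{does} intersect $\alpha$. Let $A=\{k\in[i,j]: d_S(\gamma_k,\alpha)\le D+1\}$. Since $\{\gamma_k\}$ is a $(\kappa,\lambda)$--quasi-geodesic, $A$ is contained in an interval of length $L=L(\kappa,\lambda,D)$. For $k\notin A$ the Morse comparison geodesic over $[i,k_--1]$ and over $[k_++1,j]$ (with $k_\pm$ the extremes of $A$) stays outside the $1$--ball about $\alpha$, so BGIT bounds $d_\alpha$ there by $M$. Over $[k_-,k_+]$ one uses the coarse-Lipschitz estimate along the \emph{quasi-geodesic}, which is legitimate since every $\gamma_k$ intersects $\alpha$, giving a contribution $\le 4L$. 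Summing yields $\K=2M+4L+O(1)$. Your final paragraph gestures at this (``the quasi-geodesic itself never makes such an excursion''), but the written chain of implications runs through $\sigma_{ij}$ rather than through $\{\gamma_k\}$, and that chain is broken.
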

\subsection{Geodesic laminations and foliations}
Let $X$ be a hyperbolic metric on $S$. A \emph{geodesic lamination} $\lambda$ is a
  closed subset of  $S$ which is a union of disjoint simple complete
  geodesics in the metric of $X$. Given another hyperbolic metric on $S$, 
  there is a canonical one-to-one
  correspondence between the two spaces of geodesic laminations.

A \emph{transverse measure} on a geodesic lamination $\lambda$  is an assignment of a  
Radon measure on each arc transverse to the leaves of the lamination, supported on the intersection, which is invariant under isotopies preserving the transverse intersections with the leaves of $\lambda$.  A {\em measured lamination} $\bar\lambda$ is a geodesic lamination $\lambda$ together with a transverse measure (which by an abuse of notation we also denote $\bar \lambda$).
The space of measured geodesic laminations on $S$ admits a natural topology, 
and the resulting space is denoted by $\calM\calL(S)$.  The set of non-zero 
laminations up to scale, the projective measured laminations,
is denoted $\calP\calM\calL(S)$.
The set $\calC_0(S)$ can be identified with a subset of $\calM\calL(S)$ by taking
geodesic representatives with transverse counting measure. The
intersection number $\I(\cdot\,,\cdot)\from\calC_0(S)\times\calC_0(S)\to \RR_+$
extends naturally to a continuous function 
$$
\I(\cdot\,,\cdot)\from\calM\calL\times\calM\calL\to \RR_+.
$$ 
When $\bar\lambda$ is a measured lamination and $\alpha$ a curve, 
$\I(\bar\lambda,\alpha)$ is the total mass of transverse measure on the geodesic representative of $\alpha$
assigned by $\bar\lambda$; see  \cite{thurston:GT} and \cite{penner:TT} for details.
 
A measured lamination $\bar\lambda$ is \emph{filling} if  for any curve
$\alpha$, $\I(\bar\lambda,\alpha)>0$; it is called \emph{minimal} if 
each of its leaves is a dense subset of $\lambda$.
  
There is a forgetful map from $\PML(S)$ to the set of \emph{unmeasured lamination}, where the points are {\em measurable geodesic laminations}---laminations admitting some transverse measure---and the map $\bar\lambda \mapsto \lambda$ simply forgets the transverse measure.
We consider the subset of minimal, filling, unmeasured laminations, and give it the quotient topology of the subspace topology on $\PML(S)$.  The resulting space is denoted $\EL(S)$ and is called the space of {\em ending laminations}.
  
\begin{theorem}[\cite{klarreich:bc}, Theorem 1.3; \cite{hamenstad:GB}]\label{KL} 
The Gromov boundary at infinity of the curve complex $\calC(S)$ is the 
space of minimal filling laminations on $S$. If a sequence of curve $\{\alpha_i\}$ 
is a quasi-geodesic in $\calC(S)$ then any limit point of $\alpha_i$ in $\PML(S)$ projects to $\nu$ under the forgetful map.  
\end{theorem}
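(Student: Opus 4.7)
The plan is to establish a natural bijection between $\partial_\infty \calC(S)$ and $\EL(S)$ that intertwines convergence of quasi--geodesic rays $\{\alpha_i\}\subset\calC(S)$ with the forgetful projection $\PML(S)\to\EL(S)$. I would carry this out in three steps: (i) extract a $\PML$--subsequential limit of $\{\alpha_i\}$ and show its support is minimal and filling; (ii) show this support is independent of the subsequence chosen; and (iii) show every lamination in $\EL(S)$ arises from a quasi--geodesic ray, and that the resulting map is a homeomorphism of the Gromov boundary onto $\EL(S)$.

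For step (i), fix a marking $\mu$ and, by compactness of $\PML(S)$, extract a subsequence with $\alpha_{i_k}/\I(\alpha_{i_k},\mu)\to\bnu$ in $\ML(S)$. Suppose the support $\nu$ fails to fill, so there is an essential curve $\beta$ with $\I(\bnu,\beta)=0$. Then $\I(\alpha_{i_k},\beta)/\I(\alpha_{i_k},\mu)\to 0$, and the key is to convert this quantitative collapse into a curve--complex statement. Using the logarithmic bound relating intersection number to $\calC(S)$--distance, together with the Bounded Geodesic Image Theorem cited as \thmref{BGIT} applied to the annular projection $\pi_\beta$, one argues that infinitely many $\alpha_{i_k}$ must lie in a uniformly bounded $\calC(S)$--neighborhood of $\beta$, contradicting the unboundedness of the quasi--geodesic $\{\alpha_i\}$ in the hyperbolic graph $\calC(S)$. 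The same strategy rules out proper sublaminations, since any such sublamination admits a disjoint essential curve $\beta$ that recovers the previous scenario; hence $\nu$ is both minimal and filling.

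Step (ii) is then quick: if two subsequential limits $\bnu_1,\bnu_2$ had distinct minimal filling supports $\nu_1\ne\nu_2$, then any curve $\beta$ disjoint from $\nu_1$ would yield $\I(\bnu_1,\beta)=0$, forcing $\alpha_{i_k}$ to cluster near $\beta$ along the relevant subsequence and, by the argument of step (i) applied in reverse, forcing $\I(\bnu_2,\beta)=0$ as well; but two distinct minimal filling laminations must intersect essentially, a contradiction. Hence every subsequential $\PML$--limit projects to a single $\nu\in\EL(S)$, which is the second assertion of the theorem, and we obtain a well--defined map from quasi--geodesic rays in $\calC(S)$ to $\EL(S)$ that factors through $\partial_\infty \calC(S)$.

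The main obstacle lives in step (iii): producing, for each $\nu\in\EL(S)$, an honest quasi--geodesic ray converging to it, and upgrading the resulting bijection to a homeomorphism. Klarreich's route uses Teichm\"uller theory: realize $\nu$ as the vertical foliation of a quadratic differential $(X,q)$, consider the Teichm\"uller geodesic from $X$ in the direction of $\nu$, and apply Masur--Minsky to certify that its systole curves form an unparameterized quasi--geodesic in $\calC(S)$; one then uses extremal length estimates to check that these systoles converge in $\PML(S)$ to a measure supported on $\nu$. Hamenst\"adt's alternative builds such quasi--geodesics combinatorially via train track splitting sequences, bypassing Teichm\"uller geometry. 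The delicate quantitative point in either approach is reconciling the Gromov topology on $\partial_\infty \calC(S)$ with the quotient topology $\EL(S)$ inherits from $\PML(S)$: these topologies a priori differ, and matching them---showing that $\calC(S)$--proximity of curves translates to $\PML(S)$--proximity of projective classes modulo the forgetful quotient---is where the bulk of the technical work lies.
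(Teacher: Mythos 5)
The paper does not prove this statement at all: it is imported wholesale from Klarreich \cite{klarreich:bc} (with Hamenst\"adt's \cite{hamenstad:GB} as an alternative), so there is no in-paper argument to compare yours against. Judged on its own terms, your outline correctly identifies what must be shown, but it has a genuine gap exactly at the step you treat as routine. In step (i) you pass from $\I(\alpha_{i_k},\beta)/\I(\alpha_{i_k},\mu)\to 0$ to the claim that infinitely many $\alpha_{i_k}$ lie in a bounded $\calC(S)$--neighborhood of $\beta$, invoking the logarithmic distance bound and \thmref{BGIT}. That inference does not work: the logarithmic bound $d_S(\alpha,\beta)\lmul \log \I(\alpha,\beta)$ requires smallness of the \emph{absolute} intersection number, not of the projectively normalized one, and projective convergence to a lamination disjoint from $\beta$ is perfectly compatible with $d_S(\alpha_{i_k},\beta)\to\infty$ (take $\alpha_k=T_\beta^{N_k}(\delta_k)$ with $\delta_k$ leaving every bounded set of $\calC(S)$ and $N_k$ growing fast; these converge in $\PML(S)$ to $\beta$ itself). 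So the collapse of relative intersection numbers by itself yields no curve-complex proximity, and your appeal to \thmref{BGIT} is not anchored to any geodesic or projection estimate that would produce the contradiction. The same unproved bridge is what step (ii) leans on. Translating measured-lamination convergence into curve-complex statements for an arbitrary quasi-geodesic is precisely the hard content of Klarreich's theorem; she does it through Teichm\"uller geodesics and Masur-type criteria for minimal foliations, and Hamenst\"adt through train-track splitting sequences, neither of which is replaced by the sketch.

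Step (iii) you explicitly defer to Klarreich and Hamenst\"adt, which is legitimate as a citation but means the proposal, taken as a whole, is an outline of the theorem's architecture rather than a proof: the surjectivity/homeomorphism part is quoted, and the subconvergence part rests on an implication that is false without substantially more input. If your goal was only to justify the use made of the theorem in this paper (Corollary~\ref{C:GromovBoundaryLimit}), the honest move is the one the authors make: cite the result. If you want an actual argument, you need to either reproduce Klarreich's analytic route or Hamenst\"adt's train-track route at the point where projective limits are matched with boundary points; there is no known shortcut through intersection-number ratios alone.
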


\subsection{Train-tracks}
We refer the reader to \cite{thurston:GT} and \cite{penner:TT}
for a detailed discussion of train tracks and simply recall the relevant notions for our purposes.   A {\em train-track} is a smooth embedding 
of a graph with some additional structure.  Edges of the graph are called
{\em branches} and the vertices are called {\em switches}. An {\em admissible weight} on a 
train-track is an assignment of positive real numbers to the branches that satisfy
the {\em switch conditions}, that is, at each switch the sum of incoming weights is equal to the sum of outgoing weights.

A measured lamination $\bar\lambda \in \ML(S)$ is {\em carried} by a train track $\tau$ if there exists a map $f \colon S \to S$ homotopic to the identity whose restriction to $\lambda$ is an immersion, and such that $f(\lambda) \subset \tau$.  In this case $\bar\lambda$ determines an admissible weight on $S$, and this weight uniquely determines $\bar\lambda \in \ML(S)$.
A simple closed curve $\gamma$ meets $\tau$ {\em efficiently} if it intersects $\tau$ transversely and if there are no {\em bigons} formed by $\gamma$ and $\tau$.  If $\bar\lambda$ is carried by $\tau$ and $\gamma$ meets $\tau$ efficiently, then $i(\bar\lambda,\gamma)$ is calculated as the dot product of the weight vector of $\bar\lambda$ and the vector recording the number of intersection points of $\gamma$ with each of the branches.


\subsection{$\calM\calL(S)$ versus $\calM\calF(S)$} \label{S:MLMF}
There is a natural homeomorphism between the space of measured geodesic laminations and singular measured foliations on $S$ that is the
``identity'' on the respective inclusions of the sets of weighted simple closed curves.
More precisely, given a  measured foliation $F$ on a hyperbolic surface $X$, there is a measured 
geodesic lamination $\bar\lambda_F$, obtained from $F$ by straightening the leaves 
of $F$. This assigment is 1-1, onto and for every curve $\gamma$,
we have  $\I(F,\gamma)=\I(\bar\lambda_F,\gamma)$.  See for \cite{levitt:FL} for details.

For a quadratic differential $(X,q)$, denote the horizontal and the vertical 
measured foliations by $F_-$ and $F_+$. Then, for every curve $\gamma$, 
we define the \emph{horizontal length} of $\gamma$ at $(X,q)$ to be
\[
h_q(\gamma)= \I(\gamma, F_+)
\]
and the \emph{vertical length} of $\gamma$ at $(X,q)$ to be
\[
v_q(\gamma)= \I(\gamma, F_-).
\]

\section{Minimal lamination} \label{Sec:Construction}

\subsection{Construction of curves}
Here and throughout, we consider the five-times punctured sphere $S$ as being obtained from a pentagon in the plane, doubled along its boundary.  Our pictures illustrate $S$ by showing one of these pentagons.
Given this, we consider the four curves $\gamma_0,\gamma_1,\gamma_2,\gamma_3$ on 
$S$ shown in \figref{fivecurves}.  Let $\mu$ 
be the marking $\mu = \{ \gamma_0, \gamma_1, \gamma_2, \gamma_3\}$.

\begin{figure}[htb]
\labellist
\small\hair 2pt
 \pinlabel {$\gamma_0$} [ ] at 55  32
 \pinlabel {$\gamma_1$} [ ] at 185 60
 \pinlabel {$\gamma_2$} [ ] at 300 32
 \pinlabel {$\gamma_3$} [ ] at 440 32
 \pinlabel {$\gamma_4$} [ ] at 560 32 
 \pinlabel {$2r_1$} [ ] at 520 90
\endlabellist
\begin{center}
\includegraphics[width=5in,height=.9in]{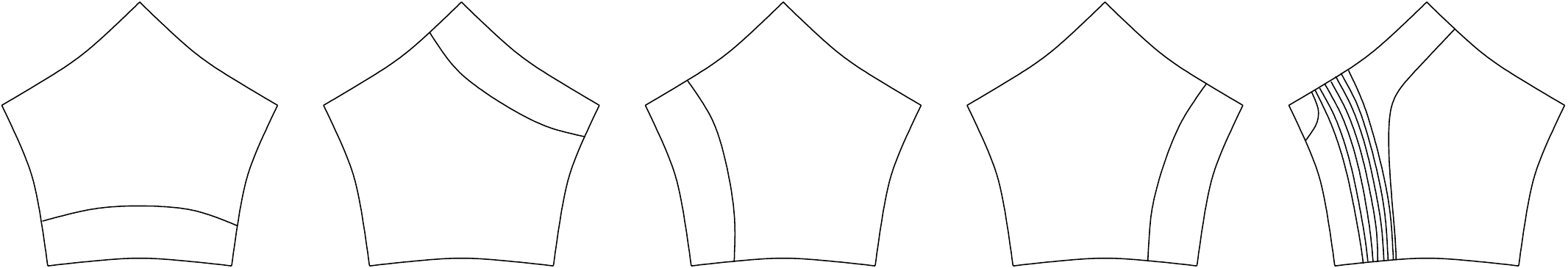}
\caption{The curves $\gamma_0,\ldots,\gamma_4$ in $S$.  By construction $\twist_{\gamma_2}(\gamma_0,\gamma_4) = r_1$.  The curves 
$\gamma_{i-2},\gamma_{i-1},\gamma_{i},\gamma_{i+1},\gamma_{i+2}$ differ from 
those shown here by replacing $r_1$ with $r_{i-1}$ and applying $\Phi_{i-2}$ to 
all five curves simultaneously (see \lemref{twist}).}
\label{Fig:fivecurves}
\end{center}
\end{figure}

Let $\rho$ denote the finite order symmetry of $S$ which is realized as the obvious 
counter-clockwise rotation by an angle $4\pi/5$ and let 
$D = D_{\gamma_2}$ be the left Dehn twist about $\gamma_2$.   
For any $r \in \ZZ$, let $\phi_r = D^r \circ \rho$.

Fix a sequence $\{r_i\}_{i = 1}^\infty \subset \ZZ_+$ of positive integers. Set
\[ \Phi_i = \phi_{r_1} \phi_{r_2} \cdots \phi_{r_{i-1}} \phi_{r_i} \]
and set $\Phi_0 = id$. Observe that, for $i = 1,2,3$ we have 
$\Phi_i(\gamma_0)  = \gamma_i$.  In fact, for any $r,s,t$, we have
\[ 
\gamma_1 = \phi_r(\gamma_0), \quad \gamma_2 = 
\phi_s \phi_r(\gamma_0), \quad \gamma_3 = \phi_t \phi_s \phi_r(\gamma_0).
\]

Now define $\gamma_i = \Phi_i(\gamma_0)$, for every $i \in \ZZ_+$.  
It follows that
\begin{equation} \label{Eq:altdefs}
\gamma_i = \Phi_i(\gamma_0) = \Phi_{i-1}(\gamma_1) = 
\Phi_{i-2}(\gamma_2) = \Phi_{i-3}(\gamma_3)
\qquad \forall i \in \ZZ. 
\end{equation}

\begin{lemma} \label{Lem:twist} 
For all $i \geq 2$, we have
\[ \twist_{\gamma_i}(\gamma_{i-2},\gamma_{i+2}) = r_{i-1}.\]
\end{lemma}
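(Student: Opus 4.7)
The plan is to reduce the general case to the base case $i=2$ shown in \figref{fivecurves} using invariance of relative twisting under mapping classes, and then verify the base case from the construction. Combining \eqref{Eq:altdefs} with the identity $\gamma_2 = \phi_s\phi_r(\gamma_0)$ valid for every $r, s$, I would rewrite the three curves as
\[
\gamma_{i-2} = \Phi_{i-2}(\gamma_0), \qquad \gamma_i = \Phi_{i-2}(\gamma_2), \qquad \gamma_{i+2} = \Phi_{i-2}\bigl(\phi_{r_{i-1}}\phi_{r_i}(\gamma_2)\bigr).
\]
The first two equalities are immediate, while the third comes from $\Phi_{i+2} = \Phi_{i-2}\circ \phi_{r_{i-1}}\phi_{r_i}\phi_{r_{i+1}}\phi_{r_{i+2}}$ together with $\phi_{r_{i+1}}\phi_{r_{i+2}}(\gamma_0) = \gamma_2$. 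Since relative twisting is invariant under homeomorphisms, the claim reduces to
\[
\twist_{\gamma_2}\bigl(\gamma_0,\, \phi_{r_{i-1}}\phi_{r_i}(\gamma_2)\bigr) \;=\; r_{i-1}.
\]

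Next I would simplify the inner curve. The parameter-independence of $\gamma_3 = \phi_t\phi_s\phi_r(\gamma_0)$ forces $\rho(\gamma_2) = \gamma_3$, and $\gamma_3$ is disjoint from $\gamma_2$, so $D = D_{\gamma_2}$ fixes it. Consequently $\phi_{r_i}(\gamma_2) = D^{r_i}\rho(\gamma_2) = \gamma_3$ independently of $r_i$, and
\[
\phi_{r_{i-1}}\phi_{r_i}(\gamma_2) \;=\; \phi_{r_{i-1}}(\gamma_3) \;=\; D^{r_{i-1}}\bigl(\rho(\gamma_3)\bigr).
\]
Applying the standard additivity of relative twisting under powers of the Dehn twist about the middle curve then yields
\[
\twist_{\gamma_2}\bigl(\gamma_0,\, D^{r_{i-1}}\rho(\gamma_3)\bigr) \;=\; \twist_{\gamma_2}\bigl(\gamma_0, \rho(\gamma_3)\bigr) \;+\; r_{i-1}.
\]

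Finally, I would verify the base identity $\twist_{\gamma_2}(\gamma_0, \rho(\gamma_3)) = 0$. Since $\rho$ has order five and $\rho(\gamma_3) = \rho^{-1}(\gamma_0)$, this is a symmetry statement: the pentagon admits a reflection fixing $\gamma_2$ that swaps $\gamma_0$ with $\rho^{-1}(\gamma_0)$, so by direct inspection of \figref{fivecurves} their lifts to the annular cover of $\gamma_2$ cross once transversely without winding. I expect this last verification to be the main obstacle, in the sense that every preceding step is a formal consequence of the group action and the twist-additivity formula, while the base case is the only part requiring direct inspection of the explicit pentagon configuration.
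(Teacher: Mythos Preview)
Your approach is essentially the same as the paper's: apply $\Phi_{i-2}^{-1}$ to reduce to a fixed configuration about $\gamma_2$, then read off the base case from \figref{fivecurves}. The paper is slightly more direct, using $\gamma_{i+2}=\Phi_{i-1}(\gamma_3)=\Phi_{i-2}\phi_{r_{i-1}}(\gamma_3)$ straight from \eqref{Eq:altdefs} rather than passing through $\phi_{r_{i-1}}\phi_{r_i}(\gamma_2)$ and then simplifying $\phi_{r_i}(\gamma_2)=\gamma_3$; and it asserts $\twist_{\gamma_2}(\gamma_0,\phi_{r_{i-1}}(\gamma_3))=r_{i-1}$ directly from the figure rather than via the additivity formula.

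One small correction to your base-case verification: you write that the lifts of $\gamma_0$ and $\rho(\gamma_3)=\rho^{-1}(\gamma_0)$ to the annular cover of $\gamma_2$ ``cross once transversely,'' which would give $\twist_{\gamma_2}=1$, not $0$. In fact $\gamma_0$ and $\rho^{-1}(\gamma_0)$ are \emph{disjoint} in $S$ (they bound disjoint pairs of punctures), so their lifts are disjoint and $\twist_{\gamma_2}(\gamma_0,\rho(\gamma_3))=0$ is immediate without any symmetry argument. With that fix your additivity step yields the desired $r_{i-1}$.
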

\begin{proof}
This follows from \eqref{Eq:altdefs}:
\begin{align*}
	\twist_{\gamma_i}(\gamma_{i-2},\gamma_{i+2}) & = \twist_{\Phi_{i-2}(\gamma_2)}(\Phi_{i-2}(\gamma_0),\Phi_{i-2} \phi_{r_{i-1}}(\gamma_3))\\
	& = \twist_{\gamma_2}(\gamma_0,\phi_{r_{i-1}}(\gamma_3))\\
	& = r_{i-1}. \qedhere
\end{align*} 
\end{proof}

\subsection{Curve complex quasi-geodesic}
In the construction above any two consecutive curves $\gamma_i$ and $\gamma_{i+1}$ 
are disjoint, and we can 
naturally view these as vertices of an infinite edge path in the curve complex 
$\calC(S)$, which we simply denote by $\{\gamma_i\}$.

We next show that the path $\{\gamma_i\}_i$ is a quasi-geodesic in
the curve complex and therefore  limits to a minimal filling lamination 
in the boundary. 

\begin{lemma} \label{Lem:Quasi-Geodesic} There exists $R>0$ such that,
if for some $i_0\geq 1$, 
\[
r_i>R  \quad \forall i \geq i_0,
\]
then the path $\{\gamma_i\}$ is an (infinite diameter) quasi-geodesic in $\calC(S)$. 
\end{lemma}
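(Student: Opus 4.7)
The plan is to prove both the trivial upper bound $d_S(\gamma_i,\gamma_j) \leq |j-i|$ (immediate because consecutive curves are disjoint) and a matching linear lower bound. The lower bound is obtained by combining the twist identity of \lemref{twist} with \thmref{BGIT} in a bootstrapping argument: large twists around each intermediate $\gamma_k$ force any geodesic in $\calC(S)$ from $\gamma_i$ to $\gamma_j$ to pass near $\gamma_k$, and hence to have length at least proportional to $|j-i|$.

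The central quantitative estimate to establish is that, for every $i, j, k$ with $i_0 + 1 \leq i+2 \leq k \leq j-2$,
\[
\twist_{\gamma_k}(\gamma_i,\gamma_j) \geq r_{k-1} - C
\]
for a uniform constant $C$. Granted this, fix quasi-geodesic constants $(\kappa,\lambda)$ and let $\K$ be the associated constant from \thmref{BGIT}. Choose $R > 2\K + C$. Then each such $\gamma_k$ satisfies $\twist_{\gamma_k}(\gamma_i,\gamma_j) > \K$, so the contrapositive of \thmref{BGIT} (applied to a geodesic in $\calC(S)$, which is trivially a quasi-geodesic) implies that any geodesic from $\gamma_i$ to $\gamma_j$ passes through a vertex disjoint from $\gamma_k$. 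A short pigeonhole argument, using that distinct $\gamma_k$ have pairwise disjoint $1$-neighborhoods once $|k-k'|$ exceeds some bounded constant, then yields $d_S(\gamma_i,\gamma_j) \geq c|j-i|$ for an explicit $c > 0$, and in particular the path has infinite diameter.

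The annular estimate itself is proved by induction on $|j-i|$, run simultaneously with the quasi-geodesic property. The base case is precisely \lemref{twist}. For the inductive step, the sub-paths $\{\gamma_i,\ldots,\gamma_{k-2}\}$ and $\{\gamma_{k+2},\ldots,\gamma_j\}$ are $(\kappa,\lambda)$--quasi-geodesics by the inductive hypothesis. Every curve in these sub-paths intersects $\gamma_k$, again by induction: a curve disjoint from $\gamma_k$ would have $\calC(S)$--distance at most $1$ from $\gamma_k$, contradicting the linear lower bound on distances from the inductive step. Hence \thmref{BGIT} bounds both $\twist_{\gamma_k}(\gamma_i,\gamma_{k-2})$ and $\twist_{\gamma_k}(\gamma_{k+2},\gamma_j)$ by $\K$; combining these bounds with \lemref{twist} and the triangle inequality for annular projections yields the desired inequality with $C = O(\K)$.

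The main obstacle is managing the interdependence of the two inductive statements. One must choose $R$, $C$, and $(\kappa,\lambda)$ in a compatible way so that the step which uses \thmref{BGIT} on the two shorter sub-paths produces a twist bound that, together with \lemref{twist}, strictly improves the hypothesized quasi-geodesic constants for the longer path. This is a standard bookkeeping exercise once the constants are fixed in the right order: first $(\kappa,\lambda)$, then $\K = \K(\kappa,\lambda)$ from \thmref{BGIT}, then $C$, and finally $R$ large enough to dominate all additive errors.
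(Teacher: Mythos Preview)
Your plan shares the essential ingredients with the paper's proof---\lemref{twist}, \thmref{BGIT}, and a pigeonhole count---but the overall architecture differs. The paper proves only a \emph{degrading} twist estimate $\twist_{\gamma_h}(\gamma_j,\gamma_k)\ge R-2(k-j)$, obtained by a direct induction that appends one curve at a time and loses $2$ at each step; this is useful only for $k-j\le J=(R-\K)/2$, so the conclusion is that $\{\gamma_i\}$ is a $J$--\emph{local} $(3,0)$--quasi-geodesic, and the global statement then comes from the local-to-global principle for $\delta$--hyperbolic spaces after taking $R$ (hence $J$) large.

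You instead aim for a \emph{uniform} twist bound $\twist_{\gamma_k}(\gamma_i,\gamma_j)\ge r_{k-1}-C$ by applying \thmref{BGIT} to the sub-paths on either side of $\gamma_k$, which must already be quasi-geodesics with the fixed constants. This avoids the local-to-global black box and yields a stronger conclusion, but the simultaneous induction is not quite ``standard bookkeeping''. Two points need attention. First, your argument that every vertex of a sub-path intersects $\gamma_k$ invokes the linear distance lower bound, but with any workable $(\kappa,\lambda)$ that bound only forces $d_S(\gamma_\ell,\gamma_k)>1$ once $|\ell-k|$ is moderately large; for the remaining gaps you must carry the \emph{filling} of $\gamma_\ell,\gamma_k$ (for $|\ell-k|\ge4$) as an explicit inductive hypothesis---the paper does exactly this in its first Claim. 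Second, the pigeonhole step also needs filling (not merely a distance bound) to ensure no geodesic vertex is disjoint from two $\gamma_k$'s that are four or more apart. Once filling is part of the package, the pigeonhole constant becomes $3$ independently of $(\kappa,\lambda)$, and the apparent circularity dissolves: one may take $(\kappa,\lambda)=(3,1)$, then $\K=\K(3,1)$, $C=2\K+O(1)$, and $R>\K+C$. So your plan is sound, but the bookkeeping is where the actual content lies.
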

\begin{proof}
Let  $\K$ be the constant associated to the bounded geodesic 
image theorem (\thmref{BGIT}).  Pick $R > \K$ (the exact value of $R$
to be determined below). Set $J = J(R) = (R-\K)/2$ and choose $i_0 = i_0(R) \geq 0$ 
so that for all $i \geq i_0$ we have $r_i \geq R$. 

\begin{claim}
For every $i_0 \leq j < h < k$ with 
\begin{equation} \label{Eq:qgeodesic1} 
k-j \leq J, \quad k-h \geq 2 \quad\text{and}\quad h-j \geq 2,
\end{equation}
the curves $\gamma_j,\gamma_k$ fill $S$ and
\begin{equation} \label{Eq:qgeodesic2}
\twist_{\gamma_h}(\gamma_j,\gamma_k) \geq R - 2(k-j).
\end{equation}
\end{claim}
\begin{proof}[Proof of Claim]
The proof is by induction on $n=k-j$.  The case $n=4$ follows from \lemref{twist} 
and the fact that $r_{h-1} \geq R \geq R - 2(k-j)$ for all $h \geq i_0$.
We assume that the conclusion of the claim holds for all $j < h < k$ 
satisfying \eqref{Eq:qgeodesic1} and $k-j \leq n < J$, where $n \geq 4$.  

Now suppose $j \leq h \leq k$ satisfy \eqref{Eq:qgeodesic1} and $k-j = n+1 \leq J$.
Since $n + 1 > 4$, either $k-h \geq 3$ or $h-j \geq 3$.  Without loss of 
generality, assume $k-h \geq 3$ (a similar argument handles the other case). 
By the induction hypothesis, \eqref{Eq:qgeodesic2} holds for $j < h < k-1$:
\[ \twist_{\gamma_h}(\gamma_j,\gamma_{k-1}) \geq R - 2(k-1-j).\]
Note that since $2 \leq k-h \leq n$ we have $\I(\gamma_h,\gamma_k) \neq 0$: this follows from \figref{fivecurves} when $k-h=2$ or $3$, and is a consequence of the inductive assumption when $k-h \geq 4$ (to see this, replace $h$ with $j$).
Consequently, the projection of $\gamma_k$ to the arc complex $\calC(\gamma_h)$ of the annular neighborhood of $\gamma_h$ is nonempty.  In particular, the triangle inequality in $\calC(\gamma_h)$, together with the fact that the projection to $\calC(\gamma_h)$ is $2$--Lipschitz, implies
\begin{align*}
	\twist_{\gamma_h}(\gamma_j,\gamma_k) & \geq \twist_{\gamma_h}(\gamma_j,\gamma_{k-1}) - \twist_{\gamma_h}(\gamma_{k-1},\gamma_k)\\
	& \geq R - 2(k-1-j) - 2 = R - 2(k-j).
\end{align*}
Therefore \eqref{Eq:qgeodesic2} holds for $j < h < k$.

To prove that the curves $\gamma_j,\gamma_k$ fill $S$, suppose this is not the case.  Since $\gamma_j$ and $\gamma_{k-1}$ do fill, their distance $d_S(\gamma_j,\gamma_{k-1})$ in $\calC(S)$ is at least $3$.  
By the triangle inequality $d_S(\gamma_j,\gamma_k) = 2$.  

Choose any $h$ with $j < h < h+1 < k$ so that $k-h-1 \geq 2$ and $h-j \geq 2$.
Since
\[ 
\twist_{\gamma_h}(\gamma_j,\gamma_k),
\twist_{\gamma_{h+1}}(\gamma_j,\gamma_k) 
\geq R - 2(k-j) \geq R - 2J = \K 
\]
it follows from \thmref{BGIT} that a geodesic $\gamma_j,\gamma',\gamma_k$ in the curve complex must pass through a curve $\gamma'$ having zero intersection number with both $\gamma_h$ and $\gamma_{h+1}$.    The only such curves are $\gamma_h$ and $\gamma_{h+1}$.   This is impossible since $\gamma_j$ and $\gamma_k$ both intersect $\gamma_h$ and $\gamma_{h+1}$.  This contradiction completes the proof of the claim.
\end{proof}

Next we prove
\begin{claim}
For every $J_0 \leq j \leq k$ so that $k-j \leq J$, we have
\[ d_S(\gamma_j,\gamma_k) \geq (k-j)/3. \]
\end{claim}
\begin{proof}[Proof of Claim]
By \thmref{BGIT}, we know that for any $h$ with $j < h < k$ satisfying 
\eqref{Eq:qgeodesic1}, any geodesic $\calG$ from $\gamma_j$ to $\gamma_k$ 
must contain a vertex $\gamma_h'$ with $i(\gamma_h,\gamma_h') = 0$.   By the 
first claim, if $j < h < f < k$ with $f-h \geq 4$, then $\gamma_h$ and $\gamma_f$ fill 
$S$, so there is no curve disjoint from both of them.  It follows that any vertex in 
$\calG$ has zero intersection number with at most three vertices of $\{ \gamma_i \}$, 
and these vertices are consecutive.  Furthermore, the only curve disjoint from three 
consecutive vertices of $\{\gamma_i\}$ is the middle curve of the three curves.

Consider the $k-j-3$ vertices $\gamma_{j+2},\ldots,\gamma_{k-2}$ of $\{ \gamma_i\}$.  
For each $h \in \{j+2,j+3,\ldots,k-2\}$, let $\gamma_h'$ denote (one of the) vertices of 
$\calG$ having zero intersection number with $\gamma_h$.  By the previous paragraph, 
this is at most three-to-one, hence the number of vertices in $\calG$ is at least 
$2 + (k-j-3)/3$, and so the distance between $\gamma_j$ and $\gamma_k$ is bounded by
\[
d_S(\gamma_j,\gamma_k) \geq 1 + (k-j)/3  - 1 = (k-j)/3
\]
as required.
\end{proof}
\begin{remark} With a little more work, the $1/3$ can be replaced by $2/3$.
\end{remark}

We have shown that $\{\gamma_i\}_{i \geq i_0}$ is a $J$--local $(3,0)$--quasi-geodesic.  
Since $\calC(S)$ is $\delta$--hyperbolic, by taking $R$, and hence $J = J(R)$, sufficiently 
large, it follows that this path is a quasi-geodesic (see for example \cite{bridson:NPC}, 
Theorem III.1.13).
\end{proof}

Combining this with Theorem~\ref{KL} immediately implies
\begin{corollary} \label{C:GromovBoundaryLimit}
With the assumptions as in Lemma~\ref{Lem:Quasi-Geodesic} there is an ending lamination $\nu$ on $S$ representing a point on the Gromov boundary of $\calC(S)$ such that
\[ \lim_{k \to \infty} \gamma_k = \nu.\]
Consequently every limit point in $\PML(S)$ of $\{\gamma_k\}$ defines a projective class of transverse measure on $\nu$.
\end{corollary}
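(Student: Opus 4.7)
The plan is to combine Lemma~\ref{Lem:Quasi-Geodesic} with Theorem~\ref{KL} (Klarreich--Hamenst\"adt); the corollary is really just a repackaging of these two results.

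First, Lemma~\ref{Lem:Quasi-Geodesic} (applied under its hypotheses on the $r_i$) guarantees that $\{\gamma_i\}$ is an infinite diameter quasi-geodesic in the $\delta$--hyperbolic curve complex $\calC(S)$. A standard fact in $\delta$--hyperbolic geometry is that every infinite diameter quasi-geodesic ray in such a space converges to a unique point on the Gromov boundary (see e.g.\ \cite{bridson:NPC}, Chapter III.H). I would apply this to $\{\gamma_i\}_{i \geq i_0}$ to produce a well-defined boundary point $\nu \in \partial \calC(S)$ with $\gamma_k \to \nu$ in the Gromov compactification. Theorem~\ref{KL} then identifies $\partial \calC(S)$ with the space $\EL(S)$ of minimal filling laminations on $S$, so $\nu$ is indeed an ending lamination.

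For the second statement, I would use compactness of $\PML(S)$: viewing each $\gamma_k$ as an element of $\PML(S)$ (via the projective class of its transverse counting measure), any subsequence has a further subsequence converging to some $\bar\lambda \in \PML(S)$. The second half of Theorem~\ref{KL} is tailored to exactly this situation: since $\{\gamma_k\}$ is a quasi-geodesic in $\calC(S)$, every such limit point $\bar\lambda$ projects to $\nu$ under the forgetful map $\PML(S) \to \EL(S)$. Unwinding the definition of this map, this means that the underlying unmeasured lamination of $\bar\lambda$ is exactly $\nu$, so $\bar\lambda$ is a projective class of transverse measure on $\nu$, as required.

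There is no real obstacle: the substantive content (quasi-geodesicity, hyperbolicity of $\calC(S)$, and Klarreich's identification of the Gromov boundary together with its control on limits in $\PML(S)$) has all been cited or established above. The only minor point worth being careful about is distinguishing between convergence in $\partial \calC(S)$ (which is automatic from $\delta$--hyperbolicity and the quasi-geodesic property) and convergence in $\PML(S)$ (which need not be unique a priori---indeed, one of the main themes of the paper is that the set of such $\PML$ limit points can be a whole simplex); Theorem~\ref{KL} is precisely what reconciles the two by ensuring all $\PML$--limits lie over the single Gromov boundary point $\nu$.
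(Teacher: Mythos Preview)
Your proposal is correct and follows exactly the paper's approach: the paper simply says ``Combining this with Theorem~\ref{KL} immediately implies'' before stating the corollary, and your argument spells out precisely that combination (quasi-geodesicity from Lemma~\ref{Lem:Quasi-Geodesic}, convergence to a unique Gromov boundary point by $\delta$--hyperbolicity, and identification with $\EL(S)$ plus control of $\PML$--limits via Theorem~\ref{KL}).
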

We emphasize that $\nu$ is unmeasured geodesic lamination, that is, it admits a transverse 
measure but it is not equipped with one.

\section{Computing intersection numbers and convergence}\label{Sec:Intersections}

For the remainder of the paper, we will assume $0 < \epsilon < \frac{1}{2}$ and that the sequence of integers $\{r_i\}$ satisfies 
\begin{equation} \label{E:assumptions-on-r_i}
\frac{1}{\epsilon} \leq r_1  \text{  and  } r_i \leq \epsilon r_{i+1}.
\end{equation}
Consequently, $\frac{1}{r_i} \leq \epsilon^i < \frac{1}{2^i}$.
By \lemref{Quasi-Geodesic}, this assumption guarantees that 
 the sequence $\{\gamma_i\}$ is a quasi-geodesic.
In this section we will provide estimates on the intersection numbers 
between pairs of curves in  $\{\gamma_i\}$, up to uniform multiplicative errors.  
Specifically, we prove the following.

\begin{proposition} \label{Prop:intersectiongamma}
For all $j < k$ with $j \equiv k \mod 2$ we have
\begin{equation} \label{Eq:k}
\I(\gamma_{j-1},\gamma_k) \emul \I(\gamma_j,\gamma_k) \emul  
  \prod_{\tiny \begin{array}{c}i = j \\ i \equiv j \mod 2 \end{array} }^{k-4} 2 r_{i+1}.
\end{equation}
\end{proposition}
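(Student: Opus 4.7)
The strategy is a joint induction on $k-j$ for the two asserted estimates, driven by Lemma~\ref{Lem:twist}, which supplies the relative twisting $\twist_{\gamma_{k-2}}(\gamma_{k-4},\gamma_k)=r_{k-3}$. The expectation is that extending $k$ by $2$ multiplies the intersection number by roughly $2r_{k-3}$, yielding the product in \eqref{Eq:k}.

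For the base case $k=j+2$ the product is empty and we need only show $\I(\gamma_j,\gamma_{j+2})$ and $\I(\gamma_{j-1},\gamma_{j+2})$ are uniformly bounded. Using the identities $\gamma_2=\phi_s\phi_r(\gamma_0)$ and $\gamma_3=\phi_t\phi_s\phi_r(\gamma_0)$ for arbitrary $r,s,t$ (stated just before Lemma~\ref{Lem:twist}), one rewrites $\gamma_{j+2}=\Phi_j(\gamma_2)=\Phi_{j-1}(\gamma_3)$, and hence $\I(\gamma_j,\gamma_{j+2})=\I(\gamma_0,\gamma_2)$ and $\I(\gamma_{j-1},\gamma_{j+2})=\I(\gamma_0,\gamma_3)$ are fixed constants.

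For the inductive step, fix $\alpha\in\{\gamma_j,\gamma_{j-1}\}$; both meet $\gamma_{k-2}$ by the filling property established in the proof of Lemma~\ref{Lem:Quasi-Geodesic}. The key input is the standard twist--intersection estimate, applied to $\gamma_{k-4},\gamma_{k-2},\gamma_k$ via Lemma~\ref{Lem:twist}:
\[
\bigl|\I(\alpha,\gamma_k)-r_{k-3}\,\I(\alpha,\gamma_{k-2})\,\I(\gamma_{k-2},\gamma_{k-4})\bigr|\lmul \I(\alpha,\gamma_{k-4}).
\]
Since $\I(\gamma_{k-2},\gamma_{k-4})=\I(\gamma_0,\gamma_2)$ is a fixed constant and, by the inductive hypothesis, $\I(\alpha,\gamma_{k-4})/\I(\alpha,\gamma_{k-2})\emul 1/(2r_{k-5})$, the growth condition \eqref{E:assumptions-on-r_i} (giving $r_{k-5}\leq\epsilon^2 r_{k-3}$ with $\epsilon<\tfrac12$) forces the main term to dominate, yielding $\I(\alpha,\gamma_k)\emul 2r_{k-3}\,\I(\alpha,\gamma_{k-2})$. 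Iterating produces the required product, and $\I(\gamma_{j-1},\gamma_k)\emul \I(\gamma_j,\gamma_k)$ follows since both recursions share the same multiplicative factor at each step and have comparable base values.

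The substantive technical point is the displayed twist--intersection estimate: Lemma~\ref{Lem:twist} only records the annular subsurface projection distance, not a literal identification of $\gamma_k$ with $D_{\gamma_{k-2}}^{r_{k-3}}(\gamma_{k-4})$, and $\gamma_k$ and $\gamma_{k-4}$ may differ globally. To justify the estimate I would work in a train-track model: build a train track carrying $\gamma_{k-4}$ and $\gamma_{k-2}$ from the marking structure near $\gamma_{k-2}$, observe that $\gamma_k$ is carried by the splitting of this track along $\gamma_{k-2}$ performed $r_{k-3}$ times, and compute $\I(\alpha,\gamma_k)$ as a dot product of weight vectors, whereupon the main term $r_{k-3}\,\I(\alpha,\gamma_{k-2})\,\I(\gamma_{k-2},\gamma_{k-4})$ emerges explicitly and the error from the unsplit portion is $\lmul \I(\alpha,\gamma_{k-4})$. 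Equivalently, pushing forward by $\Phi_{k-4}^{-1}$ reduces the estimate to a uniformly-controlled combinatorial fact about the fixed configuration $\{\gamma_0,\gamma_2,\gamma_4\}$ once one observes, via $\phi_{r_{k-3}}=D^{r_{k-3}}\rho$, that $\gamma_4$ is obtained from the $\rho$-orbit of $\gamma_0$ by a single Dehn-twist power around $\gamma_2$.
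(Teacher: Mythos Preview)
Your approach is genuinely different from the paper's. The paper does \emph{not} induct on $k-j$. Instead it writes down an explicit $\phi_r$--invariant train track $\tau$, computes the transition matrix $M_r$ of $\phi_r$ on the weight space, forms normalized products $N_jN_{j+2}\cdots N_{k-4}$ with $N_i=\tfrac{1}{2r_{i+1}}M_{r_i}M_{r_{i+1}}$, and proves uniform two-sided entrywise bounds on these products (its Proposition~4.2 and Corollary~4.4). Since $\I(\gamma_{j-1},\gamma_k)$ and $\I(\gamma_j,\gamma_k)$ are expressed as sums of specific entries, the proposition follows in one stroke with no compounding of constants.

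Your inductive route is plausible in spirit but has two concrete gaps.

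\textbf{The error term is not what you claim.} From $\phi_{r_{k-3}}=D^{r_{k-3}}\rho$ one has $\gamma_k=D_{\gamma_{k-2}}^{\,r_{k-3}}(\eta)$ with $\eta=\Phi_{k-4}(\rho(\gamma_3))$, and the standard Dehn-twist formula gives
\[
\bigl|\I(\alpha,\gamma_k)-2r_{k-3}\,\I(\alpha,\gamma_{k-2})\bigr|\leq \I(\alpha,\eta).
\]
You assert the right side is $\lmul \I(\alpha,\gamma_{k-4})$, but $\eta=\Phi_{k-4}(\rho(\gamma_3))$ while $\gamma_{k-4}=\Phi_{k-4}(\gamma_0)$, and $\rho(\gamma_3)\neq\gamma_0$. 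Your proposed fix of ``pushing forward by $\Phi_{k-4}^{-1}$ to a fixed configuration'' does not work: after that pushforward the test curve $\alpha$ becomes $\Phi_{k-4}^{-1}(\alpha)$, an \emph{arbitrary} curve depending on $r_{j+1},\dots,r_{k-4}$, and the inequality $\I(\delta,\rho(\gamma_3))\lmul\I(\delta,\gamma_0)$ is false for general $\delta$. To control $\I(\alpha,\eta)$ you would need to simultaneously track intersections with $\Phi_{k-4}(\gamma_s)$ for several $s$, which amounts to reintroducing the matrix bookkeeping the paper uses.

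\textbf{Multiplicative constants compound.} Even granting $\I(\alpha,\gamma_k)\emul 2r_{k-3}\I(\alpha,\gamma_{k-2})$ at each step with a fixed constant $K$, iterating $(k-j)/2$ times yields $K^{(k-j)/2}$, not a uniform bound. This is repairable: the Dehn-twist formula actually gives a ratio $1+O\bigl(1/(r_{k-3}r_{k-5})\bigr)$, and these corrections are summable by \eqref{E:assumptions-on-r_i}. But you must say this; ``iterating produces the required product'' hides the issue.

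The paper's matrix argument handles both issues simultaneously: the uniform entrywise bounds on $N_jN_{j+2}\cdots N_{k-4}$ are proved once, with constants independent of $j,k$, and the intersection numbers are then just finite linear combinations of those entries.
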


To prove this proposition we will analyze a certain train track that carries all but the first two curves in our sequence.  Specifically,
\figref{traintrack} shows a train track $\tau$ which is $\phi_r$--invariant, for all $r \in \ZZ_+$.  Any nonnegative vector
\[ {\bf s} = \left( \begin{array}{c}  s_1\\ s_2 \\ s_3 \\ s_4 \\ s_5 \end{array} \right)  \]
with $s_1 + s_4 + s_5 = s_2 + s_3$ and $s_1+s_2+s_3+s_4+s_5 >0$ determines a measure on $\tau$ by assigning weights to the branches as shown in \figref{traintrack}.
Setting
\[ 
{\bf v}_2 = \left( \begin{array}{c} 1 \\ 1 \\ 0 \\ 0 \\ 0 \end{array} \right) 
\qquad \text{and}\qquad
{\bf v}_3 = \left( \begin{array}{c} 0 \\ 0 \\ 1 \\ 1 \\ 0 \end{array} \right) 
\]
we can check that ${\bf v}_2$ defines the curve $\gamma_2$ and ${\bf v}_3$ 
defines the curve $\gamma_3$.

\begin{figure}[htb]
\labellist
\small\hair 2pt
 \pinlabel {$s_1$} [ ] at 17 48
 \pinlabel {$s_2$} [ ] at 25 22
 \pinlabel {$s_3$} [ ] at 85 21
 \pinlabel {$s_4$} [ ] at 90 48
 \pinlabel {$s_5$} [ ] at 65 80 
\endlabellist
\begin{center}
\includegraphics[width=2in,height=2in]{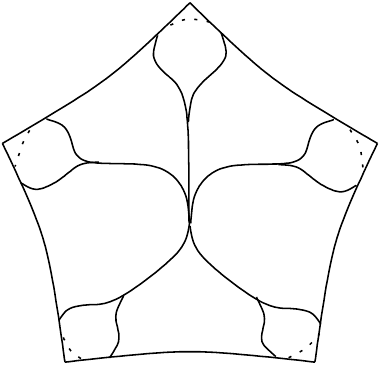}
\caption{The train track $\tau$.}
\label{Fig:traintrack}
\end{center}
\end{figure}

By a direct calculation we see that the action of $\phi_r$ on the weights on the train track (recorded as a column vector) is given by left multiplication by the matrix
\[ M_r = \left( 
	\begin{array}{ccccc} 
	0 & 0 & 0 & 2r -1 & 2r\\
	0 & 0 & 0 & 2r & 2r+1\\
	1 & 0 & 0 & 0 & 0\\
	0 & 1 & 0 & 0 & 0\\
	0 & 0 & 1 & 0 & 0
	\end{array}\right).
\]
In particular, by \eqref{Eq:altdefs} we have
\[ M_{r_1} \cdots M_{r_{i-2}} {\bf v}_2 = M_{r_1}M_{r_2} \cdots M_{r_{i-3}} {\bf v_3} \]
defines the curve $\gamma_i$.

The entries of the products $M_{r_1} \cdots M_{r_j}$ blow up as $j \to \infty$, but we can calculate the rate at which they blow up.  Using this we will scale so that certain subsequences of these scaled matrices converge.  As we explain below, the estimates we obtain can also be used to appropriately estimate intersection numbers.  

We now define
\[ N_i = \frac{1}{2r_{i+1}} M_{r_i}M_{r_{i+1}} 
	= \left( 
	\begin{array}{ccccc} 
	0 & \frac{2r_i - 1}{2r_{i+1}} & \frac{r_i}{r_{i+1}} & 0 & 0\\
	0 & \frac{r_i}{r_{i+1}} & \frac{2r_i + 1}{2r_{i+1}} & 0& 0\\
	0 & 0 & 0 & \frac{2r_{i+1}-1}{2r_{i+1}} & 1\\
	0 & 0 & 0 & 1 & \frac{2r_{i+1}+1}{2r_{i+1}}\\
	\frac{1}{2r_{i+1}} & 0 & 0 & 0 & 0
	\end{array}\right)
\]
We will analyze the matrices one row at a time, and will apply an inductive argument.  To facilitate this, let ${\bf x} = (x_1 , x_2 , x_3 , x_4 , x_5 )$ be a nonnegative row vector and $\|{\bf x}\|_\infty = \max\{ x_1,x_2,x_3,x_4,x_5 \}$.  For any $i > 0$ we can calculate 
\begin{equation} \label{Eqn:xformula} {\bf x}N_i \!=\! \left(  \! \tfrac{x_5}{2 r_{i+1}} ,
\tfrac{(2r_i-1)x_1+2r_ix_2}{2r_{i+1}} ,
\tfrac{2r_i x_1 + (2r_i+1)x_2}{2r_{i+1}} , 
\tfrac{(2 r_{i+1} - 1)x_3}{2 r_{i+1}} \!+\! x_4 , x_3 \!+\! \tfrac{(2 r_{i+1} + 1)x_4}{2 r_{i+1}}   \! \right) \end{equation}
From this and the assumption \eqref{E:assumptions-on-r_i} on $\{r_i\}$, we easily obtain the following
\begin{equation} \label{Eqn:basicxbound} {\bf x} N_i \leq \left(  \epsilon^{i+1} x_5 , \epsilon(x_1 + x_2)  , \epsilon(x_1 + x_2) + \epsilon^{i+1}  x_2  , x_3+ x_4 , (x_3+ x_4)(1+\epsilon^{i+1}) \right). \end{equation}
Here and in all that follows, an inequality of matrices or vectors means that this inequality holds for every entry.

The most general estimates we will need are then provided by the following.

\begin{proposition} \label{P:matrixproductbounds}
Fix any integer $J >0$ such that
\[ \delta = 2 \epsilon + \epsilon^J < 1.\]
Then for any nonnegative ${\bf x} = (x_1 , x_2 , x_3 , x_4 , x_5 )$ and all $j \geq J$ and $\ell \geq 0$, we have
\[ \left( 0, 0, 0, x_4, x_4 \right) \leq {\bf x} N_j N_{j+2} \ldots N_{j+ 2\ell} \hspace{16cm} \]
\[ \hspace{1cm} \leq \|{\bf x}\|_\infty \left( (2 \epsilon)^{\ell+1} , (2 \epsilon)^{\ell+1} , \delta (2 \epsilon)^\ell , 2 + \delta \sum_{i=0}^{\ell-1} (2 \epsilon)^i , \left( 2 + \delta \sum_{i=0}^{\ell-1} (2 \epsilon)^i  \right) (1 + \epsilon^j) \right). \]
Furthermore, the first inequality is valid for all $j \geq 0$ and $\ell \geq 0$.
\end{proposition}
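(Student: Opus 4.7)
The plan is to prove both inequalities by induction on $\ell$, treating the lower bound first since it is much simpler, and then the upper bound, which requires more care.

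For the lower bound, direct inspection of \eqref{Eqn:xformula} shows that for any nonnegative vector ${\bf y}$ and any index $i \geq 0$, the fourth entry of ${\bf y}N_i$ equals $y_4 + \tfrac{2r_{i+1}-1}{2r_{i+1}}y_3 \geq y_4$ and the fifth equals $y_3 + \tfrac{2r_{i+1}+1}{2r_{i+1}}y_4 \geq y_4$. Applied to ${\bf x}$ with $i=j$, the fourth and fifth entries of ${\bf x}N_j$ already dominate $x_4$. A simple induction whose hypothesis is \emph{``the fourth and fifth entries are at least $x_4$''} then shows that this property persists under each further multiplication by $N_{j+2k}$ (the new fourth entry dominates the old one, and the new fifth entry dominates the old fourth, which is at least $x_4$ by induction). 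No condition on $j$ is used, so the lower bound holds for all $j \geq 0$ and $\ell \geq 0$.

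The base case $\ell = 0$ of the upper bound is obtained by substituting $i = j \geq J$ into \eqref{Eqn:basicxbound}, using $\|{\bf x}\|_\infty$ as a uniform bound on each coordinate of ${\bf x}$ and invoking $\epsilon^{j+1} \leq \epsilon^J$ to see that the third entry is bounded by $\|{\bf x}\|_\infty(2\epsilon+\epsilon^J) = \|{\bf x}\|_\infty \delta$. For the inductive step, let ${\bf y}$ denote the vector at level $\ell$ and ${\bf z} = {\bf y}N_{j+2\ell+2}$ the vector at level $\ell+1$. Applying \eqref{Eqn:basicxbound} with $i = j+2\ell+2$ and substituting the inductive bounds, four of the five coordinates of ${\bf z}$ fall out cleanly: the bound $z_4 \leq y_3+y_4$ telescopes via the identity $\delta(2\epsilon)^\ell + \delta\sum_{i=0}^{\ell-1}(2\epsilon)^i = \delta\sum_{i=0}^{\ell}(2\epsilon)^i$; the bound on $z_5$ follows from the bound on $z_4$ together with $\epsilon^{j+2\ell+3} \leq \epsilon^j$; the inequality $z_2 \leq \epsilon(y_1+y_2) \leq 2\epsilon \cdot \|{\bf x}\|_\infty(2\epsilon)^{\ell+1} = \|{\bf x}\|_\infty(2\epsilon)^{\ell+2}$ is immediate; and $z_3 \leq \epsilon(y_1+y_2) + \epsilon^{j+2\ell+3}y_2$ gives exactly $\|{\bf x}\|_\infty(2\epsilon)^{\ell+1}(2\epsilon+\epsilon^J) = \|{\bf x}\|_\infty\delta(2\epsilon)^{\ell+1}$, where once more $j+2\ell+3 \geq J$ is used to absorb the additive error into $\delta$.

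The main obstacle is the bound $z_1 \leq \epsilon^{j+2\ell+3}y_5 \leq \|{\bf x}\|_\infty(2\epsilon)^{\ell+2}$. The inductive hypothesis bounds $y_5/\|{\bf x}\|_\infty$ by $(2+\delta\sum_{i=0}^{\ell-1}(2\epsilon)^i)(1+\epsilon^j)$, a quantity that is bounded in $\ell$ but does not shrink, whereas the target decays like $(2\epsilon)^{\ell+2}$. The rescue is that the factor $\epsilon^{j+2\ell+3}$ decays \emph{faster} than $(2\epsilon)^{\ell+2}$, so the required inequality reduces to $(2+\delta\sum_{i=0}^{\ell-1}(2\epsilon)^i)(1+\epsilon^j) \leq 2^{\ell+2}/\epsilon^{j+\ell+1}$. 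Since $2\epsilon < 1$ (a consequence of $\delta < 1$), the geometric series satisfies $\sum_{i=0}^{\ell-1}(2\epsilon)^i \leq 1/(1-2\epsilon) < 1/\epsilon^J$, where the last inequality is the essential algebraic content of $\delta<1$. An elementary case split on $\ell=0$ versus $\ell \geq 1$, together with $\epsilon<1/2$, then completes the verification and the induction.
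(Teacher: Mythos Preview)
Your proof is correct and follows essentially the same inductive strategy as the paper: induct on $\ell$ using the entrywise bound \eqref{Eqn:basicxbound}, and verify the five coordinates one at a time. The paper first reduces to ${\bf x}={\bf 1}$ and carries a slightly sharper fifth-entry bound $(1+\epsilon^{j+2\ell+1})$ through the induction, but your choice to keep $(1+\epsilon^j)$ and a general ${\bf x}$ works just as well.

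The only substantive difference is in the first-entry estimate. The paper bounds $\delta\sum_{i=0}^{\ell-1}(2\epsilon)^i$ crudely by $\ell$ (each term being at most $1$) and then uses $2+\ell\leq 2^{\ell+1}$ together with $\epsilon^{j+\ell+1}(1+\epsilon^{j+2\ell+1})<1$ to conclude in one line. Your route through the geometric-series bound $\sum<1/(1-2\epsilon)<1/\epsilon^J$ is valid, but the final ``elementary case split'' you invoke is not written out; carrying it through requires comparing a constant of the form $4+2/\epsilon^J$ against $8/\epsilon^{J+2}$, which does hold since $\epsilon<1/2$, but is slightly more work than the paper's estimate. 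If you want the cleanest finish, just adopt the paper's bound $\delta\sum<\ell$ at that one spot.
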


\begin{proof}  For every $j,\ell$, the lower bound on ${\bf x} N_j N_{j+2} \ldots N_{j+ 2\ell}$ claimed in the statement is immediate from Equation (\ref{Eqn:xformula}), so we concentrate on the upper bound.

Fix $j \geq J$, set ${\bf 1} = (1 , 1 , 1 , 1 , 1)$.  Define ${\bf y}^{(0)} = {\bf 1}N_j$ and for $\ell \geq 1$ set
\[ {\bf y}^{(\ell)} = {\bf y}^{(\ell-1)} N_j N_{j+2} \ldots N_{j+ 2\ell}.\]
Since ${\bf x} \leq \|{\bf x}\|_\infty {\bf 1}$, the upper bound will follow if we can show that for all $\ell \geq 0$
\begin{equation} \label{Eqn:xrecursive} {\bf y}^{(\ell)} \leq \left( (2 \epsilon)^{\ell+1} , (2 \epsilon)^{\ell+1} , \delta (2 \epsilon)^\ell , 2 + \delta \sum_{i=0}^{\ell-1} (2 \epsilon)^i , \left( 2 + \delta \sum_{i=0}^{\ell-1} (2 \epsilon)^i  \right) (1 + \epsilon^{j+2\ell+1}) \right)\end{equation}
We prove this by induction on $\ell$.  Appealing to (\ref{Eqn:basicxbound}), we can easily verify that the inequality is valid for $\ell = 0$.  Assuming it holds for some $\ell \geq 0$, we prove it for $\ell+1$.  

To prove this we multiply by $N_{j+2\ell+2}$ on both sides of the inequality (\ref{Eqn:xrecursive}):
\[  {\bf y}^{(\ell)} N_{j+2 \ell + 2} \leq \hspace{10cm} \]
\[  \left( (2 \epsilon)^{\ell+1} , (2 \epsilon)^{\ell+1} , \delta (2 \epsilon)^\ell , 2 + \delta \sum_{i=0}^{\ell-1} (2 \epsilon)^i , \left( 2 + \delta \sum_{i=0}^{\ell-1} (2 \epsilon)^i  \right) (1 + \epsilon^{j+2\ell+1}) \right)N_{j+2\ell + 2}. \]
The left-hand side is precisely ${\bf y}^{(\ell+1)}$.  We will bound the right-hand side by appealing to (\ref{Eqn:basicxbound}) with ${\bf x}$ replaced by the row vector on the right-hand side of (\ref{Eqn:xrecursive}).  We carry this out for each entry individually.

For the first entry we note that $\delta \displaystyle{\sum_{i=0}^{\ell-1} (2 \epsilon)^i  \leq \delta \ell < \ell}$ and so we have
\[ \begin{array}{rcl} y^{(\ell + 1)}_1 & \leq & \epsilon^{j + 2(\ell+1) + 1} \left( (2+\ell)(1+\epsilon^{j + 2\ell + 1}) \right)\\\\
& = & \epsilon^{j+\ell+1}(1+\epsilon^{j+2\ell+1})(2+\ell)\epsilon^{\ell+2} < (2 \epsilon)^{\ell+2}.
\end{array} \]
In the last inequality we have used the fact that $2+\ell \leq 2^{\ell+1}$ for all $\ell \geq 0$ and that $\epsilon^{j+\ell+1}(1+\epsilon^{j+2\ell+1}) < 1$ since $\epsilon < \tfrac12$.

We similarly bound the other entries:
\[ \begin{array}{rcl} 
y^{(\ell+1)}_2 & \leq &  \epsilon((2 \epsilon)^{\ell+1} + (2 \epsilon)^{\ell+1}) \leq 2\epsilon(2 \epsilon)^{\ell+1} = (2 \epsilon)^{\ell+2}\\\\
y^{(\ell+1)}_3 & \leq & \epsilon((2 \epsilon)^{\ell+1} + (2 \epsilon)^{\ell+1}) + (2 \epsilon)^{\ell+1} \epsilon^{j + 2\ell + 3} \\\\
& = & (2 \epsilon)^{\ell+1}(2 \epsilon + \epsilon^{j+ 2\ell + 3}) < (2 \epsilon)^{\ell+1}(2 \epsilon + \epsilon^j) \leq \delta (2 \epsilon)^{\ell+1} \\\\
y^{(\ell+1)}_4 & \leq & \displaystyle{\delta (2 \epsilon)^\ell + 2 + \delta \sum_{i=0}^{\ell-1} (2 \epsilon)^i = 2 + \delta \sum_{i=0}^\ell (2 \epsilon)^i }\\\\
y^{(\ell+1)}_5 & \leq & \displaystyle{\left(\delta (2 \epsilon)^\ell + 2 + \delta \sum_{i=0}^{\ell-1} (2 \epsilon)^i)\right)\left(1 + \epsilon^{j + 2\ell + 3} \right)}\\
 & \leq & \displaystyle{\left( 2 + \delta \sum_{i=0}^\ell (2 \epsilon)^i \right)\left(1 + \epsilon^{j + 2\ell + 3} \right).}
\end{array} \]
This completes the proof of the proposition.
\end{proof}

We deduce two easy corollaries of Proposition~\ref{P:matrixproductbounds}.

\begin{corollary} \label{C:matrices converge}
For any $j \geq 0$, the sequence of matrices $\{ N_jN_{j+2} \cdots N_{j+2\ell}\}_{\ell = 0}^\infty$ converges.  
\end{corollary}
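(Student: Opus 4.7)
The plan is to show that the sequence $P_\ell := N_j N_{j+2} \cdots N_{j+2\ell}$ is Cauchy (hence convergent) by analyzing its behavior column by column. First I would reduce to the case $j \geq J$: for smaller $j$, pick the minimal $L$ with $j+2L \geq J$, and write $P_\ell = (N_j N_{j+2}\cdots N_{j+2L-2})\cdot(N_{j+2L} N_{j+2L+2}\cdots N_{j+2\ell})$. The left factor is a fixed finite matrix, so convergence of $P_\ell$ follows from convergence of the tail, whose starting index is at least $J$. Thus we may assume $j \geq J$ throughout.

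Next I would apply Proposition~\ref{P:matrixproductbounds} with $\mathbf{x} = e_k$ (each standard row basis vector) for $k = 1,\ldots,5$ to obtain uniform entrywise upper bounds on each row of $P_\ell$. Writing out the bound, the entries in columns $1,2,3$ are bounded by $(2\epsilon)^{\ell+1}$, $(2\epsilon)^{\ell+1}$, $\delta(2\epsilon)^{\ell}$ respectively, while columns $4,5$ are bounded uniformly in $\ell$ by a constant depending only on $\epsilon$ (using $\sum_{i\geq 0}(2\epsilon)^i = 1/(1-2\epsilon) < \infty$).

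The crux is to combine these bounds with the recursion $P_{\ell+1} = P_\ell N_{j+2\ell+2}$ and the explicit form of $N_i$. Columns $1,2,3$ tend to $0$ directly from the geometric bounds. For column $4$, a direct calculation gives
\[ (P_{\ell+1})_{k,4} - (P_\ell)_{k,4} = (P_\ell)_{k,3}\left(1 - \tfrac{1}{2r_{j+2\ell+3}}\right), \]
which is nonnegative and bounded above by $\delta(2\epsilon)^\ell$. Hence $(P_\ell)_{k,4}$ is monotonically nondecreasing with summable increments, so it converges. For column $5$, the analogous computation yields
\[ (P_{\ell+1})_{k,5} - (P_{\ell+1})_{k,4} = \tfrac{1}{2r_{j+2\ell+3}}\bigl[(P_\ell)_{k,3} + (P_\ell)_{k,4}\bigr], \]
which tends to $0$ as $\ell \to \infty$ since the factor $1/(2r_{j+2\ell+3})$ goes to $0$ while the bracket stays bounded. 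Combined with convergence of column $4$, this gives convergence of column $5$ to the same limit.

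The main obstacle I anticipate is column $5$: the recursion $(P_{\ell+1})_{k,5} = (P_\ell)_{k,3} + (P_\ell)_{k,4}(1 + \tfrac{1}{2r_{j+2\ell+3}})$ does not directly produce summable differences $(P_{\ell+1})_{k,5} - (P_\ell)_{k,5}$, since $(P_\ell)_{k,4}$ itself stays bounded away from $0$ in general (e.g.\ in row $k=4$, by the lower bound in Proposition~\ref{P:matrixproductbounds}). The key trick is to pivot and compare column $5$ with column $4$ rather than with consecutive terms of column $5$, after which convergence of column $4$ (by monotonicity and boundedness) transfers to column $5$.
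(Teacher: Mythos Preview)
Your proof is correct and follows essentially the same approach as the paper's. Both reduce to $j\geq J$ by factoring out a fixed initial product, use Proposition~\ref{P:matrixproductbounds} to kill the first three columns, show the fourth column is monotone increasing and bounded (hence convergent), and then transfer convergence to the fifth column by comparing it to the fourth; the only cosmetic differences are that the paper phrases things row-by-row and uses ``ratio of fourth and fifth entries tends to $1$'' where you use ``difference tends to $0$''.
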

\begin{proof}
We first claim that it suffices to prove the corollary for $j \geq J$.  To see this, suppose it is true for all integers greater than $J$ and suppose $j < J$.  Then let $j_0 \geq J$ be such that $j \equiv j_0$ (mod 2) and observe that
\[ \{N_j N_{j+2} \cdots N_{j+2 \ell}\}_{\ell = \tfrac{j_0-j}2}^\infty = N_jN_{j+2} \cdots N_{j_0-2} \{ N_{j_0} N_{j_0+2} \cdots N_{j_0 + 2 \ell}\}_{\ell = 0}^\infty. \]
By assumption $\{ N_{j_0} N_{j_0+2} \cdots N_{j_0 + 2 \ell}\}_{\ell = 0}^\infty$ converges.  Since matrix multiplication is continuous, the sequence $\{N_j N_{j+2} \cdots N_{j+2 \ell}\}_{\ell = 0}^\infty$ also converges, verifying the claim.

Now we suppose $j \geq J$.  Fix any integer $1 \leq s \leq 5$, and let $\{ {\bf x}^{(\ell)} \}_{\ell=0}^\infty$ be the sequence of $s^{th}$--row vectors of the products $\{N_j N_{j+2} \cdots N_{j+ 2 \ell}\}_{\ell=0}^\infty$.  Since
\[ {\bf x}^{(\ell+1)} = {\bf x}^{(\ell)}N_{j+2\ell + 2} = {\bf x}^{(0)} N_j N_{j+2} \cdots N_{j+2\ell+2}, \]
it follows from Equation (\ref{Eqn:xformula}) that the sequence of fourth entries $\{{\bf x}^{(\ell)}_4\}_{\ell=0}^\infty$ is increasing.  Furthermore, by Proposition~\ref{P:matrixproductbounds} this is bounded by the sum of a convergent geometric series (since $2\epsilon < 1$).  Consequently, $\{{\bf x}^{(\ell)}_4\}_{\ell=0}^\infty$ converges.  From Equation (\ref{Eqn:xformula}) we see that the ratio of the fourth and fifth entries tends to 1, and hence the sequence of fifth entries also converges.  Finally, by Proposition~\ref{P:matrixproductbounds}, the sequences of the first three entries all tend to zero.
Since $s$ was arbitrary, all rows converge, and hence the sequence of matrices converges.
\end{proof}

\begin{corollary} \label{C:matrix bounds intersection}
There exists a constant $C = C(\epsilon) > 0$ such that for any $j \geq 0$ and $\ell \geq 0$, we have
\[ \left( \begin{array}{ccccc} 0 & 0 & 0 & 0 & 0 \\ 0 & 0 & 0 & 0 & 0\\ 0 & 0 & 0 & \frac{1}{3} & 0 \\ 0 & 0 & 0 & 1 & 0 \\ 0 & 0 & 0 & 0 & 0 \\ \end{array} \right) \leq N_j N_{j+2} \cdots N_{j+2\ell} \leq \left( \begin{array}{ccccc} C & C & C & C & C \\  C & C & C & C & C \\ C & C & C & C & C \\ C & C & C & C & C \\ C & C & C & C & C \\ \end{array} \right). \]
\end{corollary}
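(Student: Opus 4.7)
The corollary is a clean-up of Proposition~\ref{P:matrixproductbounds}; my plan is to derive both bounds row-by-row by feeding each of the five standard basis row vectors $\mathbf{e}_s$ through that proposition, using the observation that the $(s,t)$-entry of $N_j N_{j+2} \cdots N_{j+2\ell}$ is the $t$-th coordinate of $\mathbf{e}_s N_j N_{j+2} \cdots N_{j+2\ell}$.

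For the lower bound, the proposition's inequality $(0,0,0,x_4,x_4) \leq \mathbf{x} N_j N_{j+2} \cdots N_{j+2\ell}$ is stated to hold for \emph{all} $j \geq 0$ and $\ell \geq 0$. Applying it with $\mathbf{x} = \mathbf{e}_4$ immediately gives the desired lower bound of $1$ in position $(4,4)$. For position $(3,4)$, taking $\mathbf{x} = \mathbf{e}_3$ yields $x_4 = 0$ and so nothing useful, so I would first multiply out one factor by hand via Equation~(\ref{Eqn:xformula}):
\[
\mathbf{e}_3 N_j \;=\; \bigl(\,0,\; 0,\; 0,\; \tfrac{2 r_{j+1} - 1}{2 r_{j+1}},\; 1 \,\bigr),
\]
whose fourth entry is at least $1/2$ because $r_{j+1} \geq 1/\epsilon \geq 2$. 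For $\ell \geq 1$ I would then feed this vector into the tail $N_{j+2} \cdots N_{j+2\ell}$ via the proposition's lower inequality, which preserves the fourth entry and gives $(3,4)$-entry $\geq 1/2 > 1/3$; for $\ell = 0$ the single multiplication already supplies the bound.

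For the upper bound I would apply the upper inequality of Proposition~\ref{P:matrixproductbounds} with $\mathbf{x} = \mathbf{e}_s$ for each $s \in \{1,\dots,5\}$. Since $\|\mathbf{e}_s\|_\infty = 1$ and $2\epsilon < 1$, the geometric sum $\sum_i (2\epsilon)^i = 1/(1-2\epsilon)$ converges and every coordinate of the bound vector is less than some constant $C'(\epsilon)$. This handles all cases with $j \geq J$. The only wrinkle is $j < J$, since the proposition requires $j \geq J$. To deal with this, I would observe that every entry of every $N_i$ is bounded by $1+\epsilon$ (immediate from the definition of $N_i$ together with $r_i/r_{i+1} \leq \epsilon$ and $r_{i+1} \geq 1/\epsilon$), and in fact each of its row sums is bounded by $3$. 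Letting $J^{\ast}$ be the smallest integer $\geq J$ with $J^{\ast} \equiv j \pmod{2}$, I would split
\[
N_j N_{j+2} \cdots N_{j + 2\ell} \;=\; \bigl( N_j N_{j+2} \cdots N_{J^{\ast} - 2} \bigr) \bigl( N_{J^{\ast}} N_{J^{\ast}+2} \cdots N_{j + 2\ell} \bigr)
\]
(with the right-hand factor interpreted as the identity if $j + 2\ell < J^{\ast}$). The first factor is a product of at most $(J+1)/2$ matrices, so its $\infty$-norm, and hence each of its entries, is bounded by $3^{(J+1)/2}$; the second factor has entries bounded by $C'(\epsilon)$ by the previous paragraph. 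Thus each entry of the full product is bounded by $5 \cdot 3^{(J+1)/2} \cdot C'(\epsilon)$, which I take to be the constant $C(\epsilon)$. I expect no real obstacle beyond this bookkeeping; the only step requiring genuine thought beyond a direct appeal to the proposition is the one explicit multiplication needed to move $\mathbf{e}_3$ into the region $x_4 > 0$ for the $(3,4)$ lower bound.
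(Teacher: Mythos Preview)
Your proposal is correct and follows essentially the same approach as the paper's proof: both apply Proposition~\ref{P:matrixproductbounds} to the standard basis row vectors for the upper bound (handling $j<J$ by splitting off a bounded-length prefix, exactly as the paper does by reference to the argument in Corollary~\ref{C:matrices converge}), and both obtain the lower bound by combining the proposition's lower inequality with one explicit multiplication via Equation~(\ref{Eqn:xformula}) for the third row. Your write-up is simply more explicit about the $(3,4)$ entry and the prefix bound; the only trivial slip is ``$1/\epsilon \geq 2$'' where the hypothesis gives a strict inequality, but this is irrelevant to the bound you actually use.
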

\begin{proof}   The lower bound on the matrix product follows easily by taking ${\bf x}$ to be the $3^{rd}$ and $4^{th}$ rows of the identity matrix and then appealing to Equation~\ref{Eqn:xformula}, the lower bound in Proposition~\ref{P:matrixproductbounds}, and the assumption \ref{E:assumptions-on-r_i} on $\{r_i\}$.

Finding a constant $C > 0$ proving the upper bound in the corollary assuming $j \geq J$ will imply an upper bound for any $j$, with a potentially larger constant $C$, since there are only finitely many integers $0 \leq j < J$ (cf.~the proof of Corollary~\ref{C:matrices converge}).
In this case, let ${\bf x}$ be the $s^{th}$ row of the identity matrix, for any $s = 1,\ldots,5$.  Then $\|{\bf x}\|_\infty = 1$ and the right-hand side of Proposition~\ref{P:matrixproductbounds} bounds the entries of the $s^{th}$ row of $N_jN_{j+2} \cdots N_{j+2 \ell}$.  The first three entries are bounded by $2 \epsilon < 1$, while the last two are bounded by the sum of a convergent geometric series (independent of $j$ and $\ell$).  This bounds the entries of the $s^{th}$ row of $N_j N_{j+2} \cdots N_{j+2 \ell}$, independent of $j,\ell$, and $s$, completing the proof of the corollary.
\end{proof}

We now proceed to the
\begin{proof}[Proof of Propostion~\ref{Prop:intersectiongamma}.]
We first observe that \eqref{Eq:k} holds trivially if $k = j+2$. Indeed,  applying 
$\Phi_j^{-1}$ and comparing with  \figref{fivecurves}, we see that both intersection 
numbers are equal to $2$, while the right hand side (the empty product) is $1$.  
Therefore, we assume in what follows that $j \leq k - 4$ and $j \equiv k$ mod $2$.

The curves $\gamma_0$ and $\gamma_1$ are {\em not} carried by $\tau$, but 
do meet $\tau$ efficiently so that $\gamma_0$ meets the two large branches of 
$\tau$ on the bottom of \figref{traintrack} exactly once, while $\gamma_1$ 
meets the top and top right large branch each once. Setting
\[ 
{\bf u}_0 = \left( \begin{array}{c} 0 \\ 1 \\ 1 \\ 0 \\ 0 \end{array} \right)
\qquad \text{and} \qquad
{\bf u}_1 = \left( \begin{array}{c} 0 \\ 0 \\ 0 \\ 1 \\ 1 \end{array} \right) 
\]
it therefore follows that for $i \geq 4$ we have
\[ 
\I(\gamma_0,\gamma_i) = 
2 {\bf u}_0 \cdot (M_{r_1}M_{r_2} \cdots M_{r_{i-3}} {\bf v_3})
\]
and
\[ 
\I(\gamma_1,\gamma_i) = 
2 {\bf u}_1 \cdot (M_{r_1}M_{r_2} \cdots M_{r_{i-3}} {\bf v_3}). 
\]
More generally, for $k \geq j + 4$ we have
\begin{align*} 
	\I(\gamma_{j-1},\gamma_k) 
	& = \I(\Phi_{j-1}(\gamma_0),\Phi_{k-3}(\gamma_3))\\
	& = \I(\Phi_{j-1}(\gamma_0),\Phi_{j-1}\phi_{r_j} \cdots \phi_{r_{k-3}}(\gamma_3))\\
	& = \I(\gamma_0,\phi_{r_j} \cdots \phi_{r_{k-3}}(\gamma_3))\\
	& = 2 {\bf u}_0 \cdot (M_{r_j} \cdots M_{r_{k-3}} {\bf v_3})
\end{align*}
and similarly
\begin{align*}
	\I(\gamma_j,\gamma_k)
	& = \I(\gamma_1,\phi_{r_j} \cdots \phi_{k-3}(\gamma_3))\\
	& = 2 {\bf u}_1 \cdot (M_{r_j} \cdots M_{r_{k-3}}{\bf v_3}).
\end{align*}

On the other hand, because the entries of the vectors $\mathbf u_0,\mathbf u_1, \mathbf v_2,\mathbf v_3$ are all $0's$ and $1's$, the dot products in these equations are just the sums of certain entries of the matrix $M_{r_j} M_{r_{j+1}} \cdots M_{r_{k-3}}$.  Specifically, we have
\begin{equation} \label{Eq:entrysum1}
\I(\gamma_{j-1},\gamma_k) = 2 \sum_{s \in \{2,3\}, t \in \{3,4\} }  (M_{r_j} M_{r_{j+1}} \cdots M_{r_{k-3}})_{s,t}
\end{equation}
and
\begin{equation} \label{Eq:entrysum2}
\I(\gamma_j,\gamma_k) = 2 \sum_{s \in \{4,5\}, t \in \{3,4\} }  (M_{r_j} M_{r_{j+1}} \cdots M_{r_{k-3}})_{s,t}.
\end{equation}
Therefore, to prove \eqref{Eq:k}, Equations \eqref{Eq:entrysum1} and \eqref{Eq:entrysum2} show that it suffices to prove that for any $j$ and $k \equiv j$ (mod 2) with $j \leq k-4$
\[ \sum_{s \in \{2,3\} , t \in \{3,4\}}  (N_j N_{j+2}\cdots N_{k-4})_{s,t}
\, \, \emul \, \, 1 \, \, \emul \sum_{s \in \{4,5\} , t \in \{3,4\}}  (N_j N_{j+2}\cdots N_{k-4})_{s,t}  
\]
with implied multiplicative constants independent of $j$ and $k$.  This is immediate from Corollary~\ref{C:matrix bounds intersection}, with multiplicative constant $\max\{3,4C\}$.
\end{proof}

Applying both Corollaries~\ref{C:matrices converge} and \ref{C:matrix bounds intersection} we see that there are only two projective limits of $\{\gamma_k\}$.  More precisely, recall that 
$\mu =\{ \gamma_0, \gamma_1, \gamma_2, \gamma_3\}$.

\begin{proposition} \label{Prop:evenoddconverge}
Both of the sequences
\[ 
\left\{ \frac{\gamma_{2k}}{\I(\gamma_{2k},\mu)} \right\}_{k=0}^\infty 
\quad \text{and}\quad 
\left\{ \frac{\gamma_{2k+1}}{\I(\gamma_{2k+1},\mu)} \right\}_{k=0}^\infty 
\]
converge in $\ML(S)$ to non-zero measured laminations.
\end{proposition}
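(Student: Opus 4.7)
The plan is to exhibit, for each parity class, a common scaling factor $C_k$ such that $\gamma_k/C_k$ converges to a definite nonzero admissible weight vector on the train track $\tau$, and then to promote this to convergence of $\gamma_k/\I(\gamma_k,\mu)$ using continuity of the intersection pairing on $\ML(S)$.

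First, I would rewrite the weight vector of $\gamma_i$ using $\gamma_i = \Phi_{i-3}(\gamma_3)$, so that it equals $M_{r_1}M_{r_2}\cdots M_{r_{i-3}}\mathbf{v}_3$ for $i\geq 3$. Using the identity $M_{r_i}M_{r_{i+1}} = 2 r_{i+1}\,N_i$, I would pair up the matrix factors according to the parity of $i$. For even $i=2k$ (with $k\geq 2$, so $2k-3$ is odd), pair starting at the second factor, leaving $M_{r_1}$ out front:
\[
M_{r_1}M_{r_2}\cdots M_{r_{2k-3}}\mathbf{v}_3 \;=\; C_{2k}^{\mathrm{ev}}\cdot M_{r_1}\,N_2N_4\cdots N_{2k-4}\,\mathbf{v}_3,\qquad C_{2k}^{\mathrm{ev}} := \prod_{j=1}^{k-2} 2 r_{2j+1}.
\]
For odd $i=2k+1$ (with $k\geq 1$, so $2k-2$ is even), pair starting at the first factor with no leftover:
\[
M_{r_1}M_{r_2}\cdots M_{r_{2k-2}}\mathbf{v}_3 \;=\; C_{2k+1}^{\mathrm{od}}\cdot N_1N_3\cdots N_{2k-3}\,\mathbf{v}_3,\qquad C_{2k+1}^{\mathrm{od}} := \prod_{j=1}^{k-1} 2 r_{2j}.
\]
By Corollary~\ref{C:matrices converge} (applied with $j=2$ and $j=1$ respectively), the matrix products $N_2N_4\cdots N_{2k-4}$ and $N_1N_3\cdots N_{2k-3}$ converge to matrices $N^{\mathrm{ev}}_\infty$ and $N^{\mathrm{od}}_\infty$. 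By the lower bound in Corollary~\ref{C:matrix bounds intersection}, the entries $(3,4)$ and $(4,4)$ of these limits are at least $1/3$ and $1$, so the third and fourth entries of $N^{\mathrm{ev}}_\infty\mathbf{v}_3$ and $N^{\mathrm{od}}_\infty\mathbf{v}_3$ are strictly positive; using the explicit form of $M_{r_1}$ one sees that $\mathbf{w}^{\mathrm{ev}} := M_{r_1}N^{\mathrm{ev}}_\infty\mathbf{v}_3$ and $\mathbf{w}^{\mathrm{od}} := N^{\mathrm{od}}_\infty\mathbf{v}_3$ are nonzero. Since convergence of admissible weight vectors on $\tau$ is convergence in $\ML(S)$, this gives $\gamma_{2k}/C_{2k}^{\mathrm{ev}}\to\mathbf{w}^{\mathrm{ev}}$ and $\gamma_{2k+1}/C_{2k+1}^{\mathrm{od}}\to\mathbf{w}^{\mathrm{od}}$ in $\ML(S)$.

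To finish, I would invoke continuity of the pairing $\I(\cdot,\mu)\colon\ML(S)\to\RR$ together with the fact that $\mu$ is a filling marking, so $\I(\mathbf{w}^{\mathrm{ev}},\mu)$ and $\I(\mathbf{w}^{\mathrm{od}},\mu)$ are strictly positive. Then $\I(\gamma_k,\mu)/C_k = \I(\gamma_k/C_k,\mu)\to \I(\mathbf{w},\mu)>0$ in each parity, so
\[
\frac{\gamma_{2k}}{\I(\gamma_{2k},\mu)} \;=\; \frac{\gamma_{2k}/C_{2k}^{\mathrm{ev}}}{\I(\gamma_{2k},\mu)/C_{2k}^{\mathrm{ev}}} \;\longrightarrow\; \frac{\mathbf{w}^{\mathrm{ev}}}{\I(\mathbf{w}^{\mathrm{ev}},\mu)},
\]
and similarly for the odd subsequence, producing nonzero measured lamination limits. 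The only real point requiring care is the bookkeeping---pairing the matrices $M_{r_i}$ correctly according to the parity of $i$ and isolating the right scaling factor $C_k$---after which existence of the limit is immediate from Corollary~\ref{C:matrices converge} and non-vanishing from the lower bound in Corollary~\ref{C:matrix bounds intersection}. Proposition~\ref{Prop:intersectiongamma} is not strictly needed for this argument, though it encodes the same asymptotic growth.
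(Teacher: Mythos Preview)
Your argument is correct and is essentially the paper's proof: the paper treats the odd subsequence with exactly your scaling $c_{2k+1}=C_{2k+1}^{\mathrm{od}}$, reduces to the convergence of $N_1N_3\cdots N_{2k-3}\mathbf v_3$ via Corollary~\ref{C:matrices converge}, checks nonvanishing from the $(4,4)$ lower bound in Corollary~\ref{C:matrix bounds intersection}, and then divides by $\I(\cdot,\mu)$ using continuity and that $\mu$ is a marking. Your explicit treatment of the even case by leaving $M_{r_1}$ unpaired on the left (so the limit vector is $M_{r_1}N^{\mathrm{ev}}_\infty\mathbf v_3$, visibly nonzero since its first two entries are $\geq 2r_1-1$) is a perfectly good realization of what the paper dismisses as ``similar''.
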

It is not difficult to see that up to subsequence 
$\{ \gamma_k/\I(\gamma_k,\mu)\}$ converges in $\ML(S)$.  However, the interesting 
point is that we only need two subsequences---the even and odd subsequences.
\begin{proof}
We give the proof for the odd sequence.  The proof that the even sequence 
converges is similar.
First, set
\[ c_{2k+1} = \prod_{\tiny \begin{array}{c}i = 1 \\ i \equiv 1 \mbox{ mod } 2 \end{array} }^{2k-3} 2 r_{i+1}. \]
Then we claim that 
\begin{equation} \label{Eq:ML-convergence} 
\left\{ \frac{\gamma_{2k+1}}{c_{2k+1}} \right\}_{k=0}^\infty
\end{equation}
converges to a non-zero element of $\ML(S)$.  From this we see that the limit
\[  
\lim_{k \to \infty} \frac{ \I(\gamma_{2k+1},\mu) }{c_{2k+1}} = 
\lim_{k \to \infty} \I\left( \frac{\gamma_{2k+1}}{c_{2k+1}}, \mu \right) 
\]
exists (and is nonzero since $\mu$ is a marking), hence the sequence
\[ 
\left\{\frac{\gamma_{2k+1}}{\I(\gamma_{2k+1},\mu)}\right\}_{k=0}^\infty
\]
converges as required.

To see that the sequence in \eqref{Eq:ML-convergence} converges we note 
that for all $k \geq 1$, $\gamma_{2k+1}/c_{2k+1}$ is carried by the train track $\tau$ and is 
defined by the vector
\[ 
\frac{1}{c_{2k+1}} M_{r_1}\cdots M_{r_{2k-2}}{\bf v_3} = 
N_1 N_3 \cdots N_{2k-3}{\bf v_3}. 
\]
Corollary~\ref{C:matrices converge} and continuity of matrix multiplication guarantees that the sequence of vectors $\{N_1N_3 \cdots N_{2k-3} {\bf v_3} \}_{k=2}^\infty$ converges.  Furthermore, by Corollary~\ref{C:matrix bounds intersection} the fourth entry of each of these vectors is at least $1$, and hence the limiting vector is nonzero.  The limiting vector defines a measured lamination carried by $\tau$ which is the desired limit of the sequence in (\ref{Eq:ML-convergence}).
\end{proof}

Not only is $\mu$ a marking, but any four consecutive curves in the sequence $\{\gamma_i\}$ form a marking 
of $S$.  Let 
\[
\mu_i = \Phi_i (\mu) = 
 \{ \gamma_i, \gamma_{i+1}, \gamma_{i+2}, \gamma_{i+3} \}.
\]  
We end this section with another application of \propref{intersectiongamma}.

\begin{corollary} \label{Cor:mark-int1} If $k \geq j$ and $k \equiv j$ mod $2$, then
\[ \I(\mu_{j-1},\gamma_k) \emul \I(\mu_j,\gamma_k) \emul  \prod_{\tiny \begin{array}{c}i = j \\ i \equiv j \mbox{ mod } 2 \end{array} }^{k-4} 2 r_{i+1} \]
\end{corollary}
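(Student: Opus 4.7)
The plan is to expand each intersection-with-marking as a four-term sum and control every summand by Proposition~\ref{Prop:intersectiongamma}. Abbreviating the target product by
\[ P(j,k) = \prod_{\substack{i = j \\ i \equiv j \,(\mathrm{mod}\, 2)}}^{k-4} 2r_{i+1}, \]
the arithmetic input I need, beyond Proposition~\ref{Prop:intersectiongamma} itself, is the telescoping identity $P(j,k) = 2r_{j+1}\, P(j+2,k)$ whenever $j+2 \le k-4$ (and hence also $P(j,k) = 2r_{j+1}\cdot 2r_{j+3}\, P(j+4,k)$). Since $r_i \ge R \gg 1$, this forces $P(j,k)$ to dominate $P(j+2,k)$ and $P(j+4,k)$ by unbounded multiplicative factors, which is what makes the four-term sum collapse to its leading term.

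Assume first that $k \ge j+6$. Writing
\[ \I(\mu_j,\gamma_k) = \I(\gamma_j,\gamma_k) + \I(\gamma_{j+1},\gamma_k) + \I(\gamma_{j+2},\gamma_k) + \I(\gamma_{j+3},\gamma_k), \]
I would estimate each summand as follows. Since $k \equiv j$ (mod $2$), the indices $j$ and $j+2$ have the same parity as $k$, and Proposition~\ref{Prop:intersectiongamma} applies directly to give $\I(\gamma_j,\gamma_k) \emul P(j,k)$ and $\I(\gamma_{j+2},\gamma_k) \emul P(j+2,k)$. The indices $j+1$ and $j+3$ have the opposite parity; for these I would rewrite $\gamma_{j+1} = \gamma_{(j+2)-1}$ and $\gamma_{j+3} = \gamma_{(j+4)-1}$ and invoke the other half of Proposition~\ref{Prop:intersectiongamma} to get $\I(\gamma_{j+1},\gamma_k) \emul P(j+2,k)$ and $\I(\gamma_{j+3},\gamma_k) \emul P(j+4,k)$. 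By the dominance observation, the first summand swamps the other three, so $\I(\mu_j,\gamma_k) \emul P(j,k)$. The analogous expansion of $\I(\mu_{j-1},\gamma_k)$ contains both $\I(\gamma_{j-1},\gamma_k)$ and $\I(\gamma_j,\gamma_k)$, each $\emul P(j,k)$, with the remaining two summands $\emul P(j+2,k)$, giving the same conclusion.

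The small cases $k \in \{j, j+2, j+4\}$ are handled separately. For $k = j+4$ the main argument still works, the only extra input being that $\I(\gamma_{j+3},\gamma_{j+4}) = 0$ since consecutive curves in the sequence are disjoint. For $k = j$ and $k = j+2$ the target $P(j,k)$ is an empty product equal to $1$, and each summand is either zero (for consecutive or equal curves) or, after pulling back by $\Phi_j^{-1}$ or $\Phi_{j-1}^{-1}$, becomes an intersection number among the fixed curves $\gamma_0,\gamma_1,\gamma_2,\gamma_3$, hence bounded by a universal constant. I do not anticipate any real obstacle; the principal care is parity bookkeeping and checking the index ranges where each form of Proposition~\ref{Prop:intersectiongamma} applies.
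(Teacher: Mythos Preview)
Your proposal is correct and follows essentially the same approach as the paper: expand the intersection with the marking as a four-term sum, apply Proposition~\ref{Prop:intersectiongamma} to each summand (handling parity as you do), and observe that the leading term $P(j,k)$ dominates because $P(j,k)/P(j+2,k) = 2r_{j+1}$ is uniformly bounded below. The paper's proof is terser---it treats only $\mu_{j-1}$ explicitly and does not separate out the small cases $k \in \{j,j+2,j+4\}$---but your extra care with parity bookkeeping and with the boundary cases is entirely in the right spirit and fills in details the paper leaves implicit.
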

\begin{proof}
Appealing to \propref{intersectiongamma} we have
\begin{align*}
	\I(\mu_{j-1},\gamma_k) 
	& = \I(\gamma_{j-1},\gamma_k) + \I(\gamma_j,\gamma_k) + \I(\gamma_{j+1},\gamma_k) + \I(\gamma_{j+2},\gamma_k)\\
	& \emul 2 \left( \prod_{\tiny \begin{array}{c}i = j \\ i \equiv j \mbox{ mod } 2 \end{array} }^{k-4} 2 r_{i+1} +
	\prod_{\tiny \begin{array}{c}i = j+2 \\ i \equiv j \mbox{ mod } 2 \end{array} }^{k-4} 2 r_{i+1} \right)\\
	& \emul  \prod_{\tiny \begin{array}{c}i = j \\ i \equiv j \mbox{ mod } 2 \end{array} }^{k-4} 2 r_{i+1}
\end{align*}
A similar computation proves the desired estimate for $\I(\mu_j,\gamma_k)$.
\end{proof}

\section{Non-unique ergodicity} \label{Sec:Ergodicity}

The parity constraints in \propref{intersectiongamma} and \corref{mark-int1} 
indicate an asymmetry that we wish to exploit.  To better emphasize this, we let 
$\{\alpha_i\}$ and $\{ \beta_i\}$ denote the even and odd subsequence of 
$\{ \gamma_i\}$, respectively.  More precisely, for all $i \geq 0$, set
\[ 
\alpha_i = \gamma_{2i} \qquad\text{and}\qquad \beta_i = \gamma_{2i+1}.
\]
We visualized the path $\{\gamma_i \}$ in the curve complex as 
\begin{equation*}
\xymatrix{
  \alpha_0 \ar[d] & \alpha_1 \ar[d]& \alpha_2 \ar[d]&  \text{\phantom{x}} \ldots \\
  \beta_0 \ar[ur] & \beta_1 \ar[ur]& \beta_2 \ar[ur]&   \text{\phantom{x}} \ldots      
}
\end{equation*}

Set $n_i = r_{2i-1}$ and $m_i = r_{2i}$ for all $i\geq 1$. Also set $m_0=1$. 
\begin{lemma} \label{Lem:twist2}
For all $i \geq 1$, we have
\[ 
\twist_{\alpha_i}(\alpha_{i-1},\alpha_{i+1}) = n_i
\qquad\text{and}\qquad 
\twist_{\alpha_i}(\mu,\nu) \eadd n_i.
\]
Similarly, 
\[ \twist_{\beta_i}(\beta_{i-1},\beta_{i+1}) = m_i
\qquad\text{and}\qquad 
\twist_{\beta_i}(\mu,\nu) \eadd m_i.
\]
\end{lemma}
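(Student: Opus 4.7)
The first equalities are essentially just Lemma~\ref{Lem:twist} rewritten in the even/odd indexing. Substituting $\alpha_j = \gamma_{2j}$ and $n_i = r_{2i-1}$ we have
\[
\twist_{\alpha_i}(\alpha_{i-1},\alpha_{i+1}) = \twist_{\gamma_{2i}}(\gamma_{2i-2},\gamma_{2i+2}) = r_{2i-1} = n_i,
\]
and similarly $\twist_{\beta_i}(\beta_{i-1},\beta_{i+1}) = \twist_{\gamma_{2i+1}}(\gamma_{2i-1},\gamma_{2i+3}) = r_{2i} = m_i$. So the work is in the two additive estimates, and by symmetry I will focus on $\twist_{\alpha_i}(\mu,\nu) \eadd n_i$.

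The plan is to invoke the Bounded Geodesic Image Theorem (Theorem~\ref{Thm:BGIT}) on two sub-paths of the quasi-geodesic $\{\gamma_k\}$. The key geometric input, extracted from the proof of Lemma~\ref{Lem:Quasi-Geodesic}, is that the only curves in the sequence $\{\gamma_k\}$ disjoint from $\alpha_i = \gamma_{2i}$ are its immediate neighbors $\gamma_{2i-1},\gamma_{2i},\gamma_{2i+1}$; every other $\gamma_k$ intersects $\alpha_i$. I will therefore split the quasi-geodesic into two sub-quasi-geodesics that avoid the link of $\alpha_i$: the initial segment $\gamma_0,\gamma_1,\ldots,\alpha_{i-1} = \gamma_{2i-2}$, and the terminal ray $\alpha_{i+1} = \gamma_{2i+2},\gamma_{2i+3},\gamma_{2i+4},\ldots$. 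Applying Theorem~\ref{Thm:BGIT} to each yields a constant $\K$, independent of $i$, such that
\[
\twist_{\alpha_i}(\gamma_j,\alpha_{i-1}) \leq \K \quad \text{for } 0 \leq j \leq 2i-2,
\]
\[
\twist_{\alpha_i}(\alpha_{i+1},\gamma_k) \leq \K \quad \text{for } k \geq 2i+2.
\]
Since $\mu = \{\gamma_0,\gamma_1,\gamma_2,\gamma_3\}$, taking the maximum over the four curves of $\mu$ gives $\twist_{\alpha_i}(\mu,\alpha_{i-1}) \leq \K$ (for $i$ large enough that every curve of $\mu$ intersects $\alpha_i$; small $i$ contribute only to the additive constant).

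For the other bound I need to pass from $\twist_{\alpha_i}(\alpha_{i+1},\gamma_k) \leq \K$ for every $k$ to the limiting inequality $\twist_{\alpha_i}(\alpha_{i+1},\nu) \leq \K$. By Corollary~\ref{C:GromovBoundaryLimit}, $\gamma_k \to \nu$ in $\PML(S)$, hence as unmeasured laminations, and $\nu$ is filling so in particular intersects $\alpha_i$. Lifting to the annular cover $S_{\alpha_i}$, non--boundary--parallel lifts of $\gamma_k$ converge in Hausdorff topology to non--boundary--parallel lifts of $\nu$, and consequently the annular projections converge in $\calC(\alpha_i)$ up to bounded error. This yields the desired bound $\twist_{\alpha_i}(\alpha_{i+1},\nu) \leq \K + O(1)$. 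Finally, the quasi-triangle inequality for the annular subsurface projection gives
\[
\bigl| \twist_{\alpha_i}(\mu,\nu) - \twist_{\alpha_i}(\alpha_{i-1},\alpha_{i+1}) \bigr| \leq \twist_{\alpha_i}(\mu,\alpha_{i-1}) + \twist_{\alpha_i}(\alpha_{i+1},\nu) + O(1),
\]
and combined with $\twist_{\alpha_i}(\alpha_{i-1},\alpha_{i+1}) = n_i$ this delivers $\twist_{\alpha_i}(\mu,\nu) \eadd n_i$. The same argument, splitting the quasi-geodesic at $\beta_i = \gamma_{2i+1}$ into the initial segment $\gamma_0,\ldots,\gamma_{2i-1}$ and the terminal ray $\gamma_{2i+3},\gamma_{2i+4},\ldots$, handles the $\beta_i$ case.

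The main obstacle is the continuity step: justifying that the annular projection of $\gamma_k$ converges to that of $\nu$. The rest is a direct application of BGIT plus the quasi-triangle inequality for twists, both of which are essentially quoted from earlier in the paper.
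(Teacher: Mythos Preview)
Your proof is correct and follows essentially the same route as the paper: the first equalities are Lemma~\ref{Lem:twist} reindexed, and the additive estimates come from combining the first equalities with Lemma~\ref{Lem:Quasi-Geodesic} and Theorem~\ref{Thm:BGIT}. The paper's proof is a one-line citation of exactly these three ingredients, while you have spelled out the details (splitting the quasi-geodesic at the link of $\alpha_i$, applying BGIT to each sub-path, passing to the limit for $\nu$, and assembling via the quasi-triangle inequality), which is precisely the intended argument.
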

\begin{proof}
The first equality follows from \lemref{twist}. The second inequality
follows from the first inequality, \lemref{Quasi-Geodesic} and
Theorem~\ref{Thm:BGIT}. 
\end{proof}

Likewise, \propref{intersectiongamma} and \corref{mark-int1} imply the following
\begin{corollary} \label{C:intersectionswithmu}
For all $i < k$ we have
\begin{equation} \label{Eq:n}
\I(\beta_{i-1}, \alpha_k) \emul \I(\alpha_i, \alpha_k) 
  \emul \prod_{j=i+1}^{k-1} 2 n_j,
\end{equation}  
and
\begin{equation} \label{Eq:m}
\I(\alpha_i, \beta_k) \emul  \I(\beta_i, \beta_k) 
  \emul \prod_{j=i+1}^{k-1} 2 m_j.
\end{equation}
Moreover, for all $i \geq 0$ we have
\begin{equation} \label{Eq:markn} 
\I(\mu,\alpha_{i+1})  \emul 
\prod_{j=1}^i 2 n_j
\end{equation}
and  
\begin{equation} \label{Eq:markm}
\I(\mu, \beta_{i+1})  \emul \prod_{j=1}^i 2 m_j.
\end{equation}
\end{corollary}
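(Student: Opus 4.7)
The plan is to derive each of the four estimates by direct reindexing of \propref{intersectiongamma} and \corref{mark-int1}, converting the parity-constrained products over consecutive $r$-indices into the decoupled $\{n_\ell\}$ and $\{m_\ell\}$ products. There is no new mathematical content—the work is purely bookkeeping, using $\alpha_i = \gamma_{2i}$, $\beta_i = \gamma_{2i+1}$, $n_\ell = r_{2\ell-1}$, $m_\ell = r_{2\ell}$, and parity of indices.

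For \eqref{Eq:n}, I would apply \propref{intersectiongamma} with $j' = 2i$ and $k' = 2k$ (both even, valid since $i < k$), which yields
\[
\I(\gamma_{2i-1},\gamma_{2k}) \emul \I(\gamma_{2i},\gamma_{2k}) \emul \prod_{\substack{i' = 2i \\ i' \text{ even}}}^{2k-4} 2 r_{i'+1}.
\]
Substituting $i' = 2\ell - 2$ rewrites the product as $\prod_{\ell=i+1}^{k-1} 2 r_{2\ell-1} = \prod_{\ell=i+1}^{k-1} 2 n_\ell$, which is the asserted estimate. Equation \eqref{Eq:m} follows symmetrically: apply \propref{intersectiongamma} with $j' = 2i+1$, $k' = 2k+1$ (both odd), and reindex the product over odd $i'$ via $i' = 2\ell - 1$, so that $r_{i'+1} = r_{2\ell} = m_\ell$ and the product becomes $\prod_{\ell=i+1}^{k-1} 2 m_\ell$.

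For \eqref{Eq:markn}, I note that $\mu = \mu_0$, and apply \corref{mark-int1} with $j = 0$ and $k = 2i+2$ (both even); the resulting product is over even $i'$ from $0$ to $2i-2$, which via $i' = 2\ell - 2$ converts into $\prod_{\ell=1}^i 2 n_\ell$. For \eqref{Eq:markm}, I apply \corref{mark-int1} with $j = 1$ and $k = 2i+3$ (both odd), using $\mu_{j-1} = \mu_0 = \mu$; the product over odd $i'$ from $1$ to $2i-1$ reindexes via $i' = 2\ell - 1$ to $\prod_{\ell=1}^i 2 m_\ell$.

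I do not anticipate any genuine obstacle here, as the argument is mechanical. The only minor subtlety is the degenerate case $k = i+1$ in \eqref{Eq:n} and \eqref{Eq:m}, where the right-hand product is empty (equal to $1$) while the intersection number is the absolute constant $2$; this is comparable and is handled exactly as in the $k = j+2$ base case treated in the proof of \propref{intersectiongamma}.
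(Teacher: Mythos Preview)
Your proposal is correct and is precisely the expanded version of what the paper leaves implicit: the paper gives no proof, stating only that the corollary follows from \propref{intersectiongamma} and \corref{mark-int1}, and your reindexing $\alpha_i=\gamma_{2i}$, $\beta_i=\gamma_{2i+1}$, $n_\ell=r_{2\ell-1}$, $m_\ell=r_{2\ell}$ is exactly the intended translation. The handling of the degenerate $k=i+1$ case is also appropriate.
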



By \propref{evenoddconverge}, there are measured laminations $\bnu_\alpha,\bnu_\beta \in \ML(S)$ defined by the limits 
\begin{equation}\label{Eq:definition}
\bnu_\alpha = \lim_{i \to \infty} \frac {\alpha_i}{\I(\alpha_i, \mu)}
\qquad\text{and}\qquad
\bnu_\beta = \lim_{i \to \infty} \frac {\beta_i}{\I(\beta_i, \mu)}.
\end{equation}
Letting $\nu$ denote the lamination in Corollary~\ref{C:GromovBoundaryLimit}, it follows that $\bnu_\alpha$ and $\bnu_\beta$ are supported on $\nu$.
We will now show that $\bnu_\alpha,\bnu_\beta$ are distinct ergodic measures.


\begin{lemma} \label{Lem:Mutually-Singular}
 We have for $i\geq 0$
\begin{equation*}
\I(\mu, \alpha_{i+1}) \I(\alpha_i, \bnu_\alpha) \emul 1
\qquad\text{and}\qquad
\lim_{i \to \infty} \I(\mu, \alpha_{i+1}) \I(\alpha_i, \bnu_\beta)=0.
\end{equation*}
On the other hand 
\begin{equation*}
\I(\mu, \beta_{i+1}) \I(\beta_i, \bnu_\beta) \emul 1
\qquad\text{and}\qquad
\lim_{i \to \infty} \I(\mu, \beta_{i+1}) \I(\beta_i, \bnu_\alpha)=0.
\end{equation*}
\end{lemma}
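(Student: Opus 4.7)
The plan is to reduce each of the four claims to a computation of $\I(\alpha_i,\bnu_\alpha)$, $\I(\alpha_i,\bnu_\beta)$, $\I(\beta_i,\bnu_\alpha)$, or $\I(\beta_i,\bnu_\beta)$ up to uniform multiplicative constants, then combine with the estimates of $\I(\mu,\alpha_{i+1})$ and $\I(\mu,\beta_{i+1})$ from Corollary~\ref{C:intersectionswithmu}, and finally invoke the growth hypothesis (encoded as $n_j/m_j \leq \epsilon$ and $m_j/n_{j+1} \leq \epsilon$) to send the ``cross'' terms to zero.

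By continuity of intersection on $\ML(S) \times \ML(S)$ and \eqref{Eq:definition},
\[ \I(\alpha_i, \bnu_\alpha) = \lim_{k \to \infty} \frac{\I(\alpha_i,\alpha_k)}{\I(\alpha_k,\mu)}, \]
and analogous identities hold for the other three pairings. Since the multiplicative constants in \eqref{Eq:n}--\eqref{Eq:markm} are uniform in $k$, they survive under taking the limit. In particular, \eqref{Eq:n} gives $\I(\alpha_i,\bnu_\alpha) \emul 1/\prod_{j=1}^i 2 n_j$, which cancels against $\I(\mu, \alpha_{i+1}) \emul \prod_{j=1}^i 2n_j$ from \eqref{Eq:markn} to give the first estimate. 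Similarly \eqref{Eq:m} yields $\I(\alpha_i,\bnu_\beta) \emul 1/\prod_{j=1}^i 2m_j$, so
\[ \I(\mu, \alpha_{i+1})\,\I(\alpha_i, \bnu_\beta) \emul \prod_{j=1}^i \frac{n_j}{m_j} = \prod_{j=1}^i \frac{r_{2j-1}}{r_{2j}} \leq \epsilon^i \to 0. \]

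The two $\beta$--statements are proved in the same way, with one index shift to watch: \eqref{Eq:n} is stated for $\I(\beta_{i-1},\alpha_k)$, so after relabeling $\I(\beta_i,\alpha_k) \emul \prod_{j=i+2}^{k-1} 2n_j$, giving $\I(\beta_i, \bnu_\alpha) \emul 1/\prod_{j=1}^{i+1} 2n_j$. Paired with \eqref{Eq:markm},
\[ \I(\mu,\beta_{i+1})\,\I(\beta_i,\bnu_\alpha) \emul \frac{1}{2n_1}\prod_{j=1}^i \frac{m_j}{n_{j+1}} = \frac{1}{2n_1}\prod_{j=1}^i \frac{r_{2j}}{r_{2j+1}} \leq \frac{\epsilon^i}{2n_1} \to 0, \]
and the estimate $\I(\mu,\beta_{i+1})\,\I(\beta_i,\bnu_\beta) \emul 1$ follows by the same cancellation as in the $\alpha$--case, using \eqref{Eq:m} and \eqref{Eq:markm}. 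Since the argument is a purely algebraic manipulation of the estimates from Corollary~\ref{C:intersectionswithmu}, the only obstacle to speak of is bookkeeping the $\alpha$--$\beta$ index shifts correctly.
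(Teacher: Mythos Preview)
Your proof is correct and follows essentially the same approach as the paper: pass to the limit in $k$ using continuity of the intersection pairing and the definition \eqref{Eq:definition}, substitute the uniform estimates from Corollary~\ref{C:intersectionswithmu}, and bound the resulting ratios $\prod n_j/m_j$ (respectively $\prod m_j/n_{j+1}$) by $\epsilon^i$. The paper writes the computation slightly differently---keeping the finite-$k$ expression and cancelling the products directly rather than first isolating $\I(\alpha_i,\bnu_\alpha)$ etc.---and only treats the $\alpha$ half explicitly, but your more detailed bookkeeping (including the index shift for $\I(\beta_i,\bnu_\alpha)$) is equivalent.
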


\begin{proof}
Since $\bnu_\alpha$ is the limit of $\left\{ \frac{ \alpha_k}{\I(\mu,\alpha_k)} \right\}$, we may apply Equations \eqref{Eq:n} and \eqref{Eq:markn} to conclude that,
for $k$ much larger than $i$, we have 
\begin{align*}
\I(\mu, \alpha_{i+1}) \I(\alpha_i, \bnu_\alpha)
&\emul\I(\mu, \alpha_{i+1}) 
    \, \I \!\left(\alpha_i, \frac{\alpha_k}{\I (\mu, \alpha_k)}\right) \\
&\emul \frac{\prod_{j=1}^{i} 2n_j \, 
        \prod_{j=i+1}^{k-1} 2n_j}{\prod_{j=1}^{k-1} 2n_j} =1
\end{align*}
which proves the first equation. 

Next, observe that by assumption on the sequence $\{r_i\}$ we have
\[ n_j = r_{2j-1} \leq \epsilon r_{2j} = \epsilon m_j\]
for all $j \geq 1$, where $\epsilon < \tfrac12$.
Combining this with Equations \eqref{Eq:m}, \eqref{Eq:markn}, and \eqref{Eq:markm} we have
\begin{align*}
\lim_{i \to \infty} \I(\mu, \alpha_{i+1}) \I(\alpha_i, \bnu_\beta)
&= \lim_{i \to \infty} \lim_{k \to \infty} 
\I(\mu, \alpha_{i+1}) \,\I\!\left(\alpha_i, \frac{\beta_k}{\I(\mu, \beta_k)}\right) \\
&\emul \lim_{i \to \infty} \lim_{k \to \infty} 
   \frac{\prod_{j=1}^{i} 2n_j \, 
    \prod_{j=i+1}^{k-1} 2m_j}{\prod_{j=1}^{k-1} 2m_j} \\
& =  \lim_{i \to \infty} \frac{\prod_{j=1}^{i} 2n_j }{\prod_{j=1}^{i} 2m_j } \leq \lim_{i\to \infty} \epsilon^i = 0. 
\end{align*} 
This proves the first half of the proposition.  The second half can be proved 
in a similar way.
\end{proof}


\begin{corollary} \label{Cor:NUE}
The measured laminations $\bnu_\alpha$ and $\bnu_\beta$ are mutually singular 
ergodic measures on $\nu$. In particular,  $\nu$ is not uniquely ergodic. 
\end{corollary}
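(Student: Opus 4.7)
The plan is to deduce the corollary directly from Lemma~\ref{Lem:Mutually-Singular}, using as an external input the surface-specific fact (recalled in the introduction) that on the five-times punctured sphere a minimal filling lamination admits at most two ergodic transverse measures up to scale.

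First, I would show that $\bnu_\alpha$ and $\bnu_\beta$ are projectively distinct.  If we had $\bnu_\alpha = c\,\bnu_\beta$ for some $c > 0$, then $\I(\alpha_i, \bnu_\alpha) = c\,\I(\alpha_i, \bnu_\beta)$, so Lemma~\ref{Lem:Mutually-Singular} would simultaneously give $\I(\mu, \alpha_{i+1}) \I(\alpha_i, \bnu_\alpha) \emul 1$ and $\I(\mu, \alpha_{i+1}) \I(\alpha_i, \bnu_\alpha) = c\,\I(\mu, \alpha_{i+1}) \I(\alpha_i, \bnu_\beta) \to 0$, a contradiction.  Both $\bnu_\alpha$ and $\bnu_\beta$ are supported on $\nu$ by Corollary~\ref{C:GromovBoundaryLimit}, so this already shows that $\nu$ is not uniquely ergodic.

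Combining non-unique ergodicity with the dimension bound, $\nu$ has exactly two ergodic transverse measures $\bar\xi_1, \bar\xi_2$ (up to scale), and every transverse measure on $\nu$ is a non-negative combination of these.  I would then write
\[ \bnu_\alpha = a_1 \bar\xi_1 + a_2 \bar\xi_2 \quad \text{and} \quad \bnu_\beta = b_1 \bar\xi_1 + b_2 \bar\xi_2, \quad a_j, b_j \geq 0. \]
Since $\nu$ is minimal and filling and each $\alpha_i$ is an essential simple closed curve, $\alpha_i$ meets the support of every non-zero transverse measure on $\nu$ in positive measure, so $\I(\alpha_i, \bar\xi_j) > 0$ for $j = 1, 2$.

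The decisive step is to show that exactly one of $a_1, a_2$ is zero (and similarly for $b_1, b_2$).  Suppose instead that $a_1, a_2 > 0$.  Then
\[ \frac{\I(\alpha_i, \bnu_\beta)}{\I(\alpha_i, \bnu_\alpha)} = \frac{b_1 \I(\alpha_i, \bar\xi_1) + b_2 \I(\alpha_i, \bar\xi_2)}{a_1 \I(\alpha_i, \bar\xi_1) + a_2 \I(\alpha_i, \bar\xi_2)} \geq \min\!\left(\frac{b_1}{a_1}, \frac{b_2}{a_2}\right), \]
while Lemma~\ref{Lem:Mutually-Singular} forces the left-hand side to tend to $0$.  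Hence $b_1 = b_2 = 0$, contradicting $\bnu_\beta \neq 0$.  Thus $\bnu_\alpha$ is a positive multiple of a single ergodic measure, and the symmetric argument applied to the $\beta$--part of Lemma~\ref{Lem:Mutually-Singular} does the same for $\bnu_\beta$; projective distinctness then forces them to be scalar multiples of \emph{different} ergodic measures.  Mutual singularity is automatic for distinct ergodic measures, completing the proof.  The one real obstacle is the external dimension bound on $\Delta(\nu)$ for the five-times punctured sphere; once that is in hand, the rest is a short convexity argument driven entirely by Lemma~\ref{Lem:Mutually-Singular}.
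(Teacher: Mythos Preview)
Your approach is essentially the same as the paper's: use Lemma~\ref{Lem:Mutually-Singular} to show projective distinctness, invoke the surface-specific fact that $\Delta(\nu)$ is one-dimensional, then argue via convexity that each of $\bnu_\alpha, \bnu_\beta$ is supported on a single extreme point. The paper packages the key consequences of Lemma~\ref{Lem:Mutually-Singular} as
\[
\frac{\I(\alpha_i,\bnu_\alpha)}{\I(\alpha_i,\bnu_\beta)}\to \infty
\quad\text{while}\quad
\frac{\I(\beta_i,\bnu_\alpha)}{\I(\beta_i,\bnu_\beta)}\to 0,
\]
and then asserts the conclusion in one sentence; your write-up spells out the convexity step more explicitly, which is helpful.

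There is, however, one genuine slip. From the assumption $a_1,a_2>0$ and your convexity bound, the limit $\tfrac{\I(\alpha_i,\bnu_\beta)}{\I(\alpha_i,\bnu_\alpha)}\to 0$ only forces $\min(b_1/a_1,b_2/a_2)=0$, i.e.\ \emph{one} of $b_1,b_2$ vanishes, not both; so you do not obtain the claimed contradiction $\bnu_\beta=0$. The fix is to use the other half of Lemma~\ref{Lem:Mutually-Singular} simultaneously: when $a_1,a_2>0$ the same convexity gives an \emph{upper} bound
\[
\frac{\I(\gamma,\bnu_\beta)}{\I(\gamma,\bnu_\alpha)} \le \max\!\Big(\frac{b_1}{a_1},\frac{b_2}{a_2}\Big) < \infty
\]
for every curve $\gamma$, but along $\beta_i$ this ratio tends to $\infty$, yielding the contradiction. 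Equivalently, your ``symmetric argument'' for the $\beta$-part is already needed in this first step, not only afterward. With that adjustment your proof is correct and matches the paper's.
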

\begin{proof}
\lemref{Mutually-Singular} implies
\begin{equation}\label{Eq:distinct}
\frac{\I(\alpha_i,\bnu_\alpha)}{\I(\alpha_i,\bnu_\beta)}\to \infty \quad \text{ while }
\quad \frac{\I(\beta_i,\bnu_\alpha)}{\I(\beta_i,\bnu_\beta)}\to 0.
 \end{equation}
Thus $\bnu_\alpha,\bnu_\beta$ are not scalar multiples of each other, and $\nu$ is not uniquely ergodic.
For the five-times punctured sphere, the maximal dimension of the (projective) simplex of 
measures on a lamination is one. Therefore there are exactly two distinct ergodic measures
on $\nu$, up to scaling. Writing $\bnu_\alpha$ and $\bnu_\beta$ as weighted sums 
of these two measures, \eqnref{distinct} implies that 
each has zero weight on a different ergodic 
measure.
Thus $\bnu_\alpha$ and $\bnu_\beta$ are themselves distinct 
ergodic measures on $\nu$.
\end{proof}

\begin{proof}[Proof of \thmref{NUE}]
The proof follows  from Sections 3, 4 and 5 as follows.
By \lemref{Quasi-Geodesic}, $\{\gamma_i\}$ is a quasi-geodesic if 
 the elements of the  sequence $\{r_i\}$ from some point on are at least $R$.
 It follows from Theorem \ref{KL} that $\{\gamma_i\}$ converges to a minimal filling lamination $\nu$.  
 Requiring  in addition that  $r_i \leq \epsilon r_{i+1}$ for all $i\geq 1$,  we apply 
 \corref{NUE} to conclude that $\nu$ is not uniquely ergodic. 
\end{proof}

\section{Active intervals and shift} 

Let $X=X_0$ be an $\ep_0$--thick Riemann surface in \Teich space $\T(S)$ 
(for some uniform constant $\ep_0$) where the marking $\mu$ has a uniformly 
bounded length. Let 
$$
\bnu = c_\alpha \, \bnu_\alpha + c_\beta \, \bnu_\beta
$$ 
where $c_\alpha, c_\beta\geq 0$ and $c_\alpha + c_\beta > 0$. We have from \eqnref{definition} 
that $\I(\bnu_\alpha, \mu)= \I(\bnu_\beta, \mu) = 1$. Scale  the pair $(c_\alpha,c_\beta)$
so that  $\Hyp_X(\bnu) = 1$. The length of any lamination in $X$
is comparable to its intersection number with any bounded length marking
(see, for example, \cite[Proposition 3.1]{rafi:BC}). Hence, 
\[
c_\alpha + c_\beta = c_\alpha \I( \bnu_\alpha, \mu) + c_\beta \I(\bnu_\beta, \mu) 
=  \I(\bnu, \mu) \emul \Hyp_X(\bnu) = 1.
\]

Let $g= g(X, \bnu)$ be the unique \Teich geodesic ray $g \from \RR_+ \to \T(S)$ 
starting at $X$ in the direction $\bnu$. Let $(X_t,q_t)$ be the associated ray of 
quadratic differentials. Let $v_t(\param)$ and $h_t(\param)$ denote the vertical and 
the horizontal length of a curve at $q_t$. Recall that, for any curve $\gamma$, 
\[
v_t(\gamma) = e^{-t} v_0(\gamma)
\qquad\text{and}\qquad
h_t(\gamma) = e^t v_0(\gamma).
\]
In particular, for any times $s$ and $t$. 
\begin{equation} \label{E:vhconstant}
v_{s}(\gamma) h_{s}(\gamma)=v_{t}(\gamma) h_{t}(\gamma).
\end{equation}
We say a curve $\gamma$ is \emph{balanced} at time $t_0$ if
\[
v_{t_0}(\gamma) = h_{t_0}(\gamma).
\]
From basic Euclidean geometry, we know that for every curve $\gamma$, 
\[
\min \big[ v_t(\gamma), h_t(\gamma )\big]  \leq
\ell_t(\gamma) \leq  v_t(\gamma) + h_t(\gamma ).
\]
Hence, if $\gamma$ is balanced at time $t_0$ then
\[
\ell_t(\gamma) \emul h_t(\gamma) \qquad\text{for}\qquad t\geq t_0
\]
and
\[
\ell_t(\gamma) \emul v_t(\gamma) \qquad\text{for}\qquad t\leq t_0
\]

From \lemref{twist2} and using the results in \cite{rafi:SC, rafi:HT} 
we know that each curve $\alpha_i$ or $\beta_i$ is short at some point along 
this geodesic ray. In fact, the minima of the hyperbolic
length of these curves are independent of values of $c_\alpha$ and $c_\beta$. 
Here, we summarize the consequences of the theory relevant to our current setting:

\begin{proposition} \label{Prop:Intervals}
For $i$ large enough, there are intervals of time $I_i, J_i \subset \RR_+$ 
and a constant $M$ so that the following holds. 
\begin{enumerate}
\item For $t \in I_i$, $q_t$ contains a flat cylinder $F_i$ with core
curve $\alpha_i$ of modulus larger than $M$. Similarly, for $t \in J_i$,
 $q_t$ contains a flat cylinder $G_i$ with core
curve $\beta_i$ of modulus larger than $M$. As a consequence, the intervals 
are disjoint whenever the associated curves intersect. 
\item The intervals $I_i$ appear in order along $\RR_+$, as do the intervals $J_i$.
\item \label{Ppart:interval length} $|I_i| \eadd \log n_i$ and $|J_i| \eadd \log m_i$. 
\item The curve $\alpha_i$ is balanced at the 
midpoint $a_i$ of $I_i$, and $\beta_i$ is balanced at the midpoint $b_i$ of $J_i$.
The flat lengths of $\alpha_i$ and $\beta_i$ take their respective minima at the midpoints of their respective intervals.  In fact
$$
\ell_t(\alpha_i) \emul  \ell_{a_i}(\alpha_i) \cosh (t-a_i)  \quad \mbox{ and } \quad \ell_t(\beta_i) \emul  \ell_{b_i}(\beta_i) \cosh (t-b_i)
$$
\item The hyperbolic length at the balanced time depends only on the 
topological type $\nu$. In fact, for all values of $c_\alpha$ and $c_\beta$, 
$$
\Hyp_{a_i}(\alpha_i) \emul \frac 1{n_i}
\qquad\text{and}\qquad
\Hyp_{b_i}(\beta_i) \emul \frac 1{m_i}.
$$
\end{enumerate}
\end{proposition}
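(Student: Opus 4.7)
The plan is to appeal to Rafi's characterization of short curves along a Teichm\"uller geodesic in terms of subsurface projection coefficients, developed in \cite{rafi:SC} and \cite{rafi:HT}. The essential input is \lemref{twist2}, which gives $\twist_{\alpha_i}(\mu,\nu) \eadd n_i$ and $\twist_{\beta_i}(\mu,\nu) \eadd m_i$. Since $\mu$ is a bounded length marking on $X_0$ its projections to every annulus are bounded, and since the measured foliation $\bnu$ is supported on $\nu$, the value $\twist_{\alpha_i}(F_+,F_-) \eadd n_i$ depends only on the underlying unmeasured lamination and not on the coefficients $c_\alpha,c_\beta$; this is the source of the independence claim in part (5). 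We also need to rule out large non-annular projection coefficients along the geodesic, which follows from the bounded geodesic image theorem (\thmref{BGIT}) applied to the quasi-geodesic $\{\gamma_i\}$ from \lemref{Quasi-Geodesic}.

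Granted this input, Rafi's theorem produces, for each $i$ with $n_i$ larger than a threshold, a maximal interval $I_i$ during which $(X_t,q_t)$ contains a flat cylinder $F_i$ with core $\alpha_i$ of modulus at least $M$; analogously one obtains $J_i$ with cylinder $G_i$ around $\beta_i$. The ordering of the intervals $I_i$ along $\RR_+$ is inherited from the order of the short curves along the Teichm\"uller axis, which matches the order along the curve complex quasi-geodesic. Disjointness of $I_i$ and $J_j$ when the associated curves intersect is an immediate consequence of the collar lemma: intersecting curves cannot both be shorter than the Margulis constant at the same time.

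For parts (3) and (4) I would compute directly in flat coordinates. Writing $v_t(\alpha_i)=e^{-t}v_0(\alpha_i)$ and $h_t(\alpha_i)=e^t h_0(\alpha_i)$, the equation $v_t=h_t$ has the unique solution $a_i=\tfrac12\log(v_0(\alpha_i)/h_0(\alpha_i))$, and at this balanced time $v_{a_i}(\alpha_i)=h_{a_i}(\alpha_i)=\sqrt{v_0(\alpha_i)h_0(\alpha_i)}$. Because $\alpha_i$ is the core of a flat cylinder throughout $I_i$, its flat length satisfies $\ell_t(\alpha_i) \emul v_t(\alpha_i)+h_t(\alpha_i) \emul \ell_{a_i}(\alpha_i)\cosh(t-a_i)$, up to a uniform multiplicative constant, which proves the length formula in (4) and identifies $a_i$ as the midpoint of $I_i$. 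The hyperbolic length then drops below the Margulis threshold precisely on an interval symmetric about $a_i$ of length $\eadd \log n_i$, giving (3). For part (5), the modulus of a flat cylinder is preserved along a Teichm\"uller geodesic, so $\mathrm{mod}(F_i) \emul n_i$ throughout $I_i$; the standard comparison between hyperbolic length of a short curve and the inverse modulus of its flat annulus then yields $\Hyp_{a_i}(\alpha_i) \emul 1/n_i$, with analogous estimates for $\beta_i$.

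The main obstacle is verifying Rafi's hypotheses uniformly in the parameters $c_\alpha,c_\beta$: one needs both that the only subsurfaces with large projection coefficients are the annuli around the $\gamma_i$, and that each annular coefficient has the prescribed value $\eadd n_i$ or $\eadd m_i$ regardless of the particular transverse measure chosen on $\nu$. The former comes from the bounded geodesic image theorem together with the curve complex quasi-geodesic \lemref{Quasi-Geodesic}, while the latter relies crucially on the fact that relative annular twisting is a topological invariant that does not see the transverse measures on $\nu$; extracting clean uniform constants from Rafi's framework in this exact form is where the bulk of the care will be required.
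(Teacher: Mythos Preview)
Your approach is exactly the paper's: the paper gives no proof of this proposition at all, simply introducing it with ``From \lemref{twist2} and using the results in \cite{rafi:SC, rafi:HT}\ldots Here, we summarize the consequences of the theory relevant to our current setting,'' and then stating the proposition. Your sketch of how the pieces of Rafi's machinery assemble is therefore more than the paper itself provides, and the ingredients you identify (the annular coefficients from \lemref{twist2}, the bounded geodesic image theorem to suppress non-annular projections, the balance-time computation in flat coordinates) are the right ones.

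One technical slip to flag: in your argument for part (5) you write that ``the modulus of a flat cylinder is preserved along a Teichm\"uller geodesic.'' This is false. Teichm\"uller flow preserves the \emph{area} of a flat cylinder, but the circumference $\ell_t(\alpha_i)$ varies as in part (4), so the modulus $= \text{area}/\ell_t(\alpha_i)^2$ is maximal at the balanced time $a_i$ and decays like $1/\cosh^2(t-a_i)$; indeed this decay is exactly what determines the endpoints of $I_i$ (where the modulus drops to $M$) and hence gives $|I_i| \eadd \log n_i$. The conclusion $\Hyp_{a_i}(\alpha_i) \emul 1/n_i$ you want still holds, via the comparison of hyperbolic length with inverse extremal length (or inverse modulus of the maximal annulus) evaluated at $t = a_i$, where $\mathrm{mod}(F_i) \emul n_i$; but it is not because the modulus is constant.
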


Denote the left and right endpoints of $I_i$ by $\ua_i$ and  $\ba_i$, respectively.  Similarly denote and the endpoints of $J_i$ by $\ub_i$ and $\bb_i$.   As a corollary of Proposition~\ref{Prop:Intervals}(3) we have
\begin{corollary} \label{C:endpointvalues}  For all sufficiently large $i$ we have
\[
\ua_i \eadd a_i - \frac{\log n_i}{2}
\qquad\text{and}\qquad
\ba_i \eadd  a_i + \frac{\log n_i}{2}. 
\]
and
\[
\ub_i \eadd b_i - \frac{\log m_i}{2}
\qquad\text{and}\qquad
\bb_i \eadd  b_i + \frac{\log m_i}{2}.
\]
\end{corollary}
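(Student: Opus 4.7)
The plan is straightforward: the corollary is essentially a direct consequence of parts (3) and (4) of \propref{Intervals}, together with the definition of the midpoint. First, I would record that by part (4), the point $a_i$ is the midpoint of the interval $I_i = [\ua_i, \ba_i]$, so by definition
\[
a_i - \ua_i = \ba_i - a_i = \tfrac{1}{2}|I_i|.
\]
Next, I would apply part (3), which asserts $|I_i| \eadd \log n_i$, meaning there is a uniform additive constant $C$ with $\bigl| |I_i| - \log n_i \bigr| \leq C$. Dividing by $2$ preserves an additive bound (with constant $C/2$), yielding $\tfrac{1}{2}|I_i| \eadd \tfrac{1}{2}\log n_i$.

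Combining these two observations gives
\[
\ba_i = a_i + \tfrac{1}{2}|I_i| \eadd a_i + \tfrac{\log n_i}{2}
\qquad\text{and}\qquad
\ua_i = a_i - \tfrac{1}{2}|I_i| \eadd a_i - \tfrac{\log n_i}{2},
\]
which are the first two equations of the corollary. The statements for $\ub_i$ and $\bb_i$ are established by the exact same argument applied to the interval $J_i = [\ub_i, \bb_i]$ with midpoint $b_i$, using the estimate $|J_i| \eadd \log m_i$ from part (3).

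Since the entire derivation reduces to the definition of midpoint plus one application of each of parts (3) and (4), there is no genuine obstacle: the only subtlety is simply to note that additive constants are preserved (up to a factor of $1/2$) under halving, which is why the proof is presented as a corollary rather than being absorbed into \propref{Intervals} itself.
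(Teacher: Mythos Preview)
Your proof is correct and matches the paper's approach: the paper simply presents this as an immediate corollary of \propref{Intervals}(3) without further argument, and your proof supplies exactly the intended two-line justification using the midpoint from part (4) and the length estimate from part (3).
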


Using Proposition~\ref{Prop:Intervals} and our estimates on intersection numbers, we can estimate the values of $a_i$ and $b_i$ (and hence also of $\ua_i,\ba_i,\ub_i,\bb_i$).

\begin{lemma} \label{L:interval locations}
If $c_\alpha > 0$, then for $i$ sufficiently large (depending on $c_\alpha$) we have
\[
a_i \eadd \sum_{j=1}^{i-1} \log 2n_j + \frac {\log n_i}2  - \frac {\log c_\alpha}2. 
\]
If $c_\alpha = 0$, then for $i$ sufficiently large we have
\[
a_i\eadd  \sum_{j=1}^{i-1} 
   \log 2n_j + \frac {\log n_i}2 + \frac 12 \sum_{j=1}^{i}\log \frac{m_j}{n_j}.
\]
If $c_\beta > 0$, then for $i$ sufficiently large (depending on $c_\beta$) we have 
\[
b_i\eadd \sum_{j=1}^{i-1} \log 2m_j + \frac {\log m_i}2- \frac {\log c_\beta}2.
\]
If $c_\beta = 0$, then for $i$ sufficiently large we have
\[
b_i\eadd  \sum_{j=1}^{i-1} \log 2m_j + 
   \frac {\log m_i}2 + \frac 12\sum_{j=1}^{i}\log \frac{n_{j+1}}{m_j}.
\]
\end{lemma}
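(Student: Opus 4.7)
The plan is to use the defining property: $a_i$ is the time at which $\alpha_i$ is balanced, $v_{a_i}(\alpha_i) = h_{a_i}(\alpha_i)$. Combined with $v_t(\alpha_i) = e^{-t} v_0(\alpha_i)$ and $h_t(\alpha_i) = e^{t}h_0(\alpha_i)$, this gives the exact closed form
\[
a_i = \tfrac{1}{2}\log\!\left(\frac{v_0(\alpha_i)}{h_0(\alpha_i)}\right),
\]
and analogously for $b_i$. So it suffices to estimate the vertical and horizontal lengths of $\alpha_i$ (and $\beta_i$) at the initial time $0$.

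Since the vertical foliation of $q_0$ represents $\bnu = c_\alpha \bnu_\alpha + c_\beta \bnu_\beta$, one has $h_0(\alpha_i) = \I(\alpha_i,\bnu) = c_\alpha \I(\alpha_i,\bnu_\alpha) + c_\beta \I(\alpha_i,\bnu_\beta)$. Combining \lemref{Mutually-Singular} with \eqref{Eq:markn} gives $\I(\alpha_i,\bnu_\alpha) \emul 1/\prod_{j=1}^{i} 2n_j$, while passing to the limit $k\to\infty$ in the ratio $\I(\alpha_i,\beta_k)/\I(\mu,\beta_k)$ using \eqref{Eq:m} and \eqref{Eq:markm} yields $\I(\alpha_i,\bnu_\beta) \emul 1/\prod_{j=1}^{i} 2m_j$. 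Since $n_j \leq \epsilon m_j$ with $\epsilon < \tfrac12$, the first dominates geometrically, so for $c_\alpha > 0$ and $i$ large (depending on $c_\alpha$) we have $h_0(\alpha_i) \emul c_\alpha/\prod_{j=1}^{i} 2n_j$, while for $c_\alpha = 0$ we have $h_0(\alpha_i) \emul 1/\prod_{j=1}^{i} 2m_j$ directly.

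The crucial claim is $v_0(\alpha_i) \emul \I(\mu,\alpha_i) \emul \prod_{j=1}^{i-1} 2n_j$, with multiplicative constants independent of $c_\alpha,c_\beta$. This rests on three facts: (i) $\ell_0(\alpha_i) \emul v_0(\alpha_i) + h_0(\alpha_i)$ by basic Euclidean geometry; (ii) because $X_0$ is $\epsilon_0$-thick and the normalization $\Hyp_{X_0}(\bnu) = 1$ forces $\area(q_0) = \Ext_{X_0}(\bnu) \emul 1$, the flat metric of $q_0$ is uniformly bi-Lipschitz to the hyperbolic metric on $X_0$, so $\ell_0(\alpha_i) \emul \Hyp_0(\alpha_i) \emul \I(\mu,\alpha_i)$ by the marking-length formula; (iii) $h_0(\alpha_i) \to 0$ while $\I(\mu,\alpha_i) \to \infty$ as $i \to \infty$, so $v_0(\alpha_i) \emul \ell_0(\alpha_i)$ for $i$ large. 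The hard part is (ii): the bi-Lipschitz constants must be uniform in the direction $\bnu$, which is precisely where the thickness of $X_0$ and the length normalization of $\bnu$ are essential.

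Assembling, when $c_\alpha > 0$,
\[
a_i \eadd \tfrac{1}{2}\log\!\left(\frac{\prod_{j=1}^{i-1} 2n_j \cdot \prod_{j=1}^{i} 2n_j}{c_\alpha}\right) \eadd \sum_{j=1}^{i-1} \log 2n_j + \tfrac{1}{2}\log n_i - \tfrac{1}{2}\log c_\alpha,
\]
using $\prod_{j=1}^{i} 2n_j = 2n_i \prod_{j=1}^{i-1} 2n_j$ and absorbing constants into $\eadd$. The $c_\alpha = 0$ case is identical with $h_0 \emul 1/\prod 2m_j$; regrouping the resulting logarithms produces the stated $\tfrac12\sum_{j=1}^{i}\log(m_j/n_j)$ correction. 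The formulas for $b_i$ are proved by the symmetric computation using $\I(\beta_i,\bnu_\beta) \emul 1/\prod_{j=1}^{i} 2m_j$ (from \lemref{Mutually-Singular}) and $\I(\beta_i,\bnu_\alpha) \emul 1/\prod_{j=1}^{i+1} 2n_j$ (from \eqref{Eq:n} and \eqref{Eq:markn}); the second is always much smaller than the first, so $h_0(\beta_i)$ is dominated by the $c_\beta$-term when $c_\beta > 0$, and by the lone $\bnu_\alpha$-term when $c_\beta = 0$, yielding the claimed formulas.
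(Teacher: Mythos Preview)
Your argument is correct and follows essentially the same approach as the paper's proof. Both rest on the balance identity (you write it as $a_i = \tfrac12\log(v_0(\alpha_i)/h_0(\alpha_i))$, the paper equivalently computes $v_{a_i}(\alpha_i)^2 = v_0(\alpha_i)h_0(\alpha_i)$ and then $a_i = \log(v_0/v_{a_i})$), together with the estimates $v_0(\alpha_i)\emul \I(\mu,\alpha_i)$ from thickness of $X_0$ and $h_0(\alpha_i)=\I(\alpha_i,\bnu)$ from \lemref{Mutually-Singular} and Corollary~\ref{C:intersectionswithmu}. Your extra care about uniformity of the flat--hyperbolic comparison in $(c_\alpha,c_\beta)$ is a point the paper handles by citation; your justification that $v_0\emul\ell_0$ via $h_0(\alpha_i)\to 0$ is a harmless variant of the paper's observation that $0\leq a_i$ forces $h_0\leq v_0$.
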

The lemma tells us that for $c_\alpha > 0$, as $c_\alpha$ gets smaller, the balance times $a_i$ are shifted more and more
to the right compared with when $c_\alpha=1$. However, the amount of shift is fixed 
for a fixed non-zero value of $c_\alpha$ and large enough $i$.  On the other hand, the shift tends to infinity as $i \to \infty$ when $c_\alpha = 0$.  See 
\figref{times_alpha}. This phenomenon is responsible for the different behavior
of \Teich geodesics associated to different values $c_\alpha$ and $c_\beta$. 
\begin{figure}
    \begin{center}
    \centerline{\includegraphics{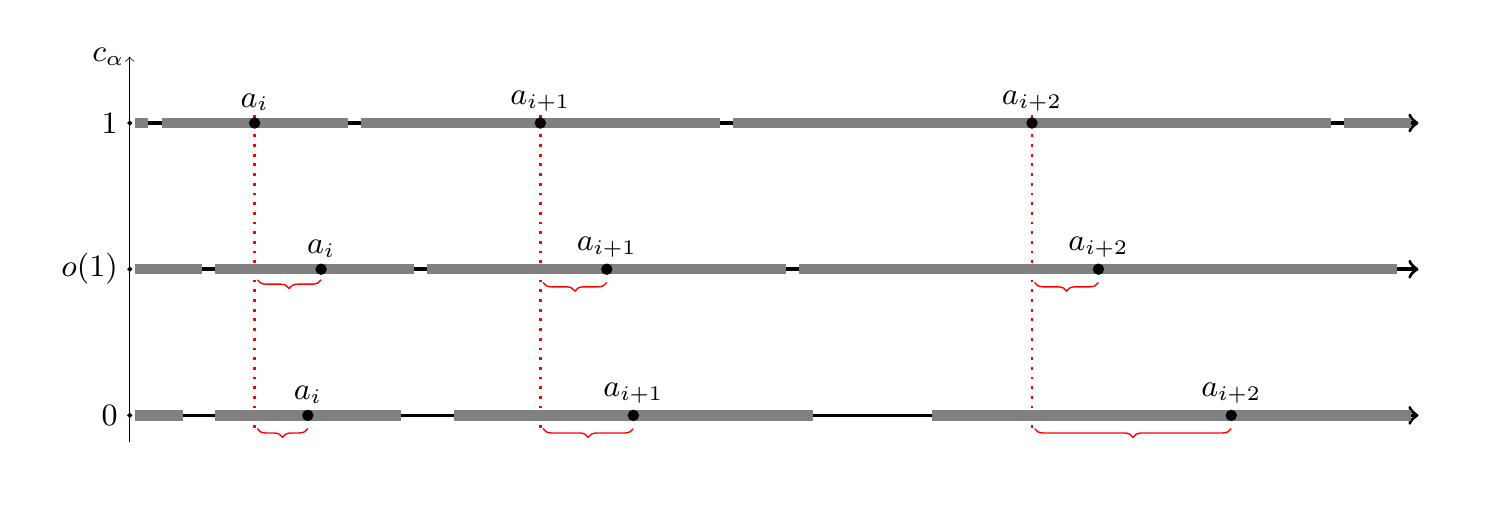}}
    \end{center}
    \caption{The position of intervals $I_i$ depending on the value of $c_\alpha$. 
    When $c_\alpha$
    is small,  balance times $a_i$ are shifted forward. The shift is fixed for a fixed non-zero
    value of $c_\alpha$, and grows as $c_\alpha$ decreases. When $c_\alpha=0$, the shift 
    goes to infinity with $i$. Similar picture holds for intervals $J_i$. }
    \label{Fig:times_alpha}
    \end{figure} 
\begin{proof}
We prove the estimates for $a_i$.  The proofs of the estimates for $b_i$ are similar.
First observe that by Proposition~\ref{Prop:Intervals}(1)--(3), $a_i, b_i$ must tend to infinity.

Since $0\leq a_i$, $v_0(\alpha_i) \emul \ell_0(\alpha_i)$.
Also, since $X_0$ is $\ep_0$--thick, the flat lengths in $q_0$ and the hyperbolic
lengths in $X_0$ are comparable (see \cite{rafi:TT} for a general discussion). 
Hence,
\begin{equation} \label{Eq:v_0}
 v_0(\alpha_i)  \emul \ell_0(\alpha_i) \emul \Hyp_0(\alpha) \emul \I(\alpha_i, \mu) 
\emul \prod_{j=1}^{i-1} 2n_j. 
\end{equation}
where the last equality follows from Corollary~\ref{C:intersectionswithmu}.
By definition of the horizontal length we have
\[
h_0(\alpha_i) \emul \I(\alpha_i, \bnu).
\]
Therefore, combining this all with \eqref{E:vhconstant} we have
\begin{eqnarray} \notag  v_{a_i}(\alpha_i)^2 & = &  
v_{a_i}(\alpha_i) h_{a_i}(\alpha_i)  = v_0(\alpha_i) h_0(\alpha_i)  \emul  \I(\alpha_i, \mu)  \, \I(\alpha_i, \bnu)\\ & \emul & \I(\alpha_i, \mu) 
[c_\alpha \I(\alpha_i, \bnu_\alpha)+ c_\beta \I(\alpha_i, \bnu_\beta)] \label{E:vaisquared}
\end{eqnarray}

We now divide the proof into the two cases specified by the Lemma.\\

\noindent {\bf Case 1.} $c_\alpha > 0$.
\begin{proof}
According to Lemma~\ref{Lem:Mutually-Singular}, $\I(\alpha_i,\bnu_\alpha) \emul \frac{1}{\I(\mu, \alpha_{i+1})}$ while
\[ 0\leq \lim_{i \to \infty} \I(\mu,\alpha_i)\I(\alpha_i,\bnu_\beta) \leq \lim_{i \to \infty} \I(\mu,\alpha_{i+1})\I(\alpha_i,\bnu_\beta) \to 0.\]
Thus, for $i$ sufficiently large (depending on $c_\alpha$) we have
\[ \I(\alpha_i, \mu) [c_\alpha \I(\alpha_i, \bnu_\alpha)+ c_\beta \I(\alpha_i, \bnu_\beta)] \emul  \frac {c_\alpha  \I(\alpha_i, \mu)}{\I(\mu, \alpha_{i+1})}\]

Combining this with \eqref{E:vaisquared} and Corollary~\ref{C:intersectionswithmu} we have
\[ v_{a_i}(\alpha_i)^2  \emul  \I(\alpha_i,\mu)[c_\alpha \I(\alpha_i, \bnu_\alpha)+ c_\beta \I(\alpha_i, \bnu_\beta)] \emul   \frac {c_\alpha \I(\alpha_i, \mu)}{\I(\mu, \alpha_{i+1})}
\emul  c_\alpha \frac{\prod_{j=1}^{i-1} 2n_j}{\prod_{j=1}^{i} 2n_j} 
  = \frac{c_\alpha}{2n_i}.
\]
Thus $v_{a_i}(\alpha_i) \emul \sqrt{\frac{c_\alpha}{2n_i}}$.  Plugging this and \eqref{Eq:v_0} into Proposition~\ref{Prop:Intervals}(4) we obtain
\[ a_i = \log \frac{v_0(\alpha_i)}{v_{a_i}(\alpha_i)}
\eadd \sum_{j=1}^{i-1} \log 2n_j + \frac {\log n_i}2  - \frac {\log c_\alpha}2\]
as required.
\end{proof}

\noindent {\bf Case 2.}  $c_\alpha = 0$.
\begin{proof}
From the definition of $\bnu_\beta$ and Corollary~\ref{C:intersectionswithmu} we have
$$
\I(\alpha_i,\bnu)=\I(\alpha_i,c_{\beta}\bnu_{\beta})=
\lim_{k\to\infty}\frac{c_{\beta}\I(\alpha_i,\beta_k)}{\I(\mu,\beta_k)}\emul
    \frac{1}{\prod_{j=1}^{i}2m_j},
$$
Combining this with (\ref{E:vaisquared}), and appealing to Corollary~\ref{C:intersectionswithmu} we have
\[ v_{a_i}(\alpha_i)^2
   \emul \I(\alpha_i,\mu)i(\alpha_i,c_\beta\bnu_\beta)
   \emul \frac{\prod_{j=1}^{i-1}2n_j}{\prod_{j=1}^{i}2m_j}
                       = \frac{\prod_{j=1}^{i-1}n_j}{2\prod_{j=1}^{i}m_j} \]
As in the previous case, we can plug this and \eqref{Eq:v_0} into Proposition~\ref{Prop:Intervals}(4) to obtain
\begin{eqnarray*} a_i & = & \log\frac{v_0(\alpha_i)}{v_{a_i}(\alpha_i)}
 \eadd \sum_{j=1}^{i-1}\log{2n_j}
+\frac{1}{2}\left[\sum_{j=1}^{i}\log{m_j} -\sum_{j=1}^{i-1}\log{n_j}\right]\\
& \eadd & \sum_{j=1}^{i-1} 
   \log 2n_j + \frac {\log n_i}2 + \frac 12 \sum_{j=1}^{i}\log \frac{m_j}{n_j}.
\end{eqnarray*}
\end{proof}
This takes care of both cases for $c_\alpha$ and completes the proof.
\end{proof}

\section{Growth conditions}
Our assumptions on $\{r_i\}$ in \eqref{E:assumptions-on-r_i} are equivalent to the requirement that the sequences $\{n_i\}$ and $\{m_i\}$ satisfy the following conditions for all $i>0$
\begin{equation}\label{Eq:exponential}
n_1 \geq \frac1\epsilon > 2, \quad\frac{n_i}{m_{i-1}}\geq \frac1\epsilon > 2 \quad\text{and}\quad \frac{m_i}{n_i}\geq\frac1\epsilon > 2.
\end{equation}
After introducing some additional conditions on these sequences, we will investigate the relation
between  intervals $I_i$ and $J_i$. 
\begin{definition} \label{Def:Growth} 
We define the following conditions on $\{n_i\}$ and $\{m_i\}$. 

\begin{equation}
\prod_{j=1}^i\frac{n_j}{m_{j-1}} \leq \prod_{j=1}^{i}\frac{m_j}{n_j} \leq \prod_{j=1}^{i+1}\frac{n_{j}}{m_{j-1}}, \text{  for all big enough } i.\tag{$\calG_1$} \label{tagG1}
\end{equation}
\begin{equation}
\frac{m_{i+1}}{n_{i+1}}=o\left(\prod_{j=1}^{i} \frac{n_j}{m_{j-1}}\right) 
\qquad\text{and}\qquad
\frac{n_{i+2}}{m_{i+1}}=o\left(\prod_{j=1}^i \frac{m_j}{n_j}\right) \quad\text{as }i\to \infty.
  \tag{$\calG_2$} \label{tagG2}
\end{equation}
\end{definition}

It is not difficult to choose $n_i$ and $m_i$ to satisfy the above
conditions. For example: 

\begin{lemma}\label{Lem:exists} 
There exist $\{n_i\}$ and $\{m_i\}$ satisfying \eqref{Eq:exponential} 
and for which \eqref{tagG1} and \eqref{tagG2} hold.
\end{lemma}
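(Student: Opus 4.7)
The plan is to exhibit an explicit example coming from a purely geometric sequence, which turns out to already be enough. Fix any constant $K \geq 1/\epsilon$ (so in particular $K > 2$) and set
$$n_i = K^{2i-1} \quad \text{and} \quad m_i = K^{2i}$$
for all $i \geq 1$; equivalently, the underlying sequence from \eqref{E:assumptions-on-r_i} is $r_i = K^i$. With this ansatz fixed, the whole proof reduces to routine verification of the three conditions.

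Condition \eqref{Eq:exponential} is immediate: $n_1 = K \geq 1/\epsilon$, and every consecutive ratio satisfies $n_i/m_{i-1} = m_i/n_i = K \geq 1/\epsilon$. For \eqref{tagG1}, the key point is that the symmetric choice makes both products in the definition telescope to the same quantity:
$$\prod_{j=1}^i \frac{n_j}{m_{j-1}} \;=\; K^i \;=\; \prod_{j=1}^i \frac{m_j}{n_j}.$$
Thus the first inequality in \eqref{tagG1} is actually an equality, and the second becomes $K^i \leq K^{i+1}$, which is trivial.

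For \eqref{tagG2}, the two individual ratios $m_{i+1}/n_{i+1}$ and $n_{i+2}/m_{i+1}$ are both constantly equal to $K$, while both of the products above grow geometrically as $K^i \to \infty$. The $o$--conditions therefore reduce to $K \cdot K^{-i} \to 0$, which is automatic since $K > 1$.

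There is no substantive obstacle; the only conceptual step is the choice of ansatz. Setting the two consecutive ratios equal (the symmetric case) makes the two products in \eqref{tagG1} identically equal, which is the cleanest way to fit them into the required double inequality. Since those products then grow geometrically while the single ratios appearing in \eqref{tagG2} remain bounded, the second growth condition comes for free. A less symmetric geometric choice would also work, but would require a short extra comparison for \eqref{tagG1}, so the symmetric ansatz is preferable.
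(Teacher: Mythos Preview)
Your proof is correct and essentially identical to the paper's own argument: the paper also takes $n_i = K^{2i-1}$, $m_i = K^{2i}$ for an integer $K > 1/\epsilon$, and verifies the three conditions by the same telescoping computation you give. The only cosmetic difference is that the paper insists $K$ be an integer (so that the $r_i = K^i$ are integers, as required by the construction), whereas you write $K \geq 1/\epsilon$; you may want to add the word ``integer'' for consistency with \eqref{E:assumptions-on-r_i}.
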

\begin{proof}
Fix any integer $K > \frac1\epsilon$, set $m_0 = 1$, and for $i > 0$ set $m_i = K^{2i}$ and $n_i = K^{2i-1}$.  Then for all $i > 0$, we have
\[ 
n_1 = \frac{n_i}{m_{i-1}} = \frac{m_i}{n_i} = K > \frac1\epsilon,
\]
and hence \eqref{Eq:exponential} is satisfied.  In addition
\[ \prod_{j=1}^i\frac{n_j}{m_{j-1}} = \prod_{j=1}^{i}\frac{m_j}{n_j}= K^i  \leq K^{i+1} = \prod_{j=1}^{i+1}\frac{n_{j}}{m_{j-1}},
\]
and thus \eqref{tagG1} and \eqref{tagG2} are satisfied.
\end{proof}

The following lemma provides some information on the position of the intervals
$I_i$ and $J_i$ when the weights $c_\alpha$ and $c_\beta$ are non-zero.
By the notation $\Ga \ll \Gb$ (where $\Ga$ and $\Gb$ are functions of $i$)
we mean $\Gb-\Ga$ goes to infinity as $i$ approaches infinity.  Recall that $\ua_i < \ba_i$ and $\ub_i < \bb_i$ are the endpoints of $I_i$ and $J_i$, respectively.

\begin{figure}
    \begin{center}
    \centerline{\includegraphics{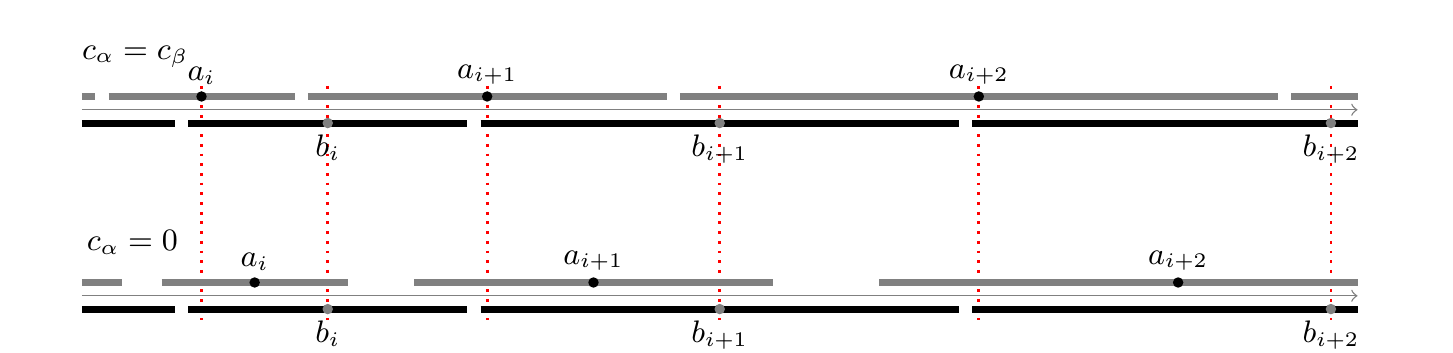}}
    \end{center}
    \caption{Relative position of intervals $I_i$ and $J_i$ depending on $c_\alpha$ and $c_\beta$.
    }
    \label{Fig:times}
    \end{figure} 
    
\begin{lemma} \label{Lem:Times_non_zero} 
Suppose that  $\{n_i\}$ and $\{m_i\}$ satisfy \eqref{tagG1}. Then for any
non-zero value of $c_\alpha$ and $c_\beta$ and  large enough $i$
(depending on $c_\alpha$ and $c_\beta$), we have 
$$
 \ua_i\ll \bb_{i-1} < \ub_{i} \ll  \ba_{i} < \ua_{i+1}\ll \bb_i
$$
and $$
\ba_i \ladd b_i+\frac 1 2 \log\frac{c_\beta}{c_\alpha}
\qquad\text{and}\qquad
 \bb_i \ladd a_{i+1}+\frac 1 2 \log\frac{c_\alpha}{c_\beta}.
$$
\end{lemma}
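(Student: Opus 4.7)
The plan is to apply Lemma~\ref{L:interval locations} (in the case $c_\alpha, c_\beta > 0$) and Corollary~\ref{C:endpointvalues} to obtain, for all sufficiently large $i$, the additive-error formulas
\begin{align*}
\ua_i &\eadd \sum_{j=1}^{i-1}\log(2n_j) - \tfrac{\log c_\alpha}{2}, &
\ba_i &\eadd \ua_i + \log n_i, \\
\ub_i &\eadd \sum_{j=1}^{i-1}\log(2m_j) - \tfrac{\log c_\beta}{2}, &
\bb_i &\eadd \ub_i + \log m_i,
\end{align*}
and to reduce every assertion in the lemma to an elementary inequality among products of the $n_j$'s and $m_j$'s. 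The point is that each half of condition $(\calG_1)$ is engineered precisely to close one of the two most delicate of these inequalities.

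Subtracting the formulas in pairs, the three $\ll$ claims become, up to additive error,
\[ \bb_{i-1}-\ua_i \eadd \log\!\prod_{j=1}^{i-1}\!\tfrac{m_j}{n_j} + \tfrac{1}{2}\log\tfrac{c_\alpha}{c_\beta}, \qquad \bb_i-\ua_{i+1}\eadd \log\!\prod_{j=1}^{i}\!\tfrac{m_j}{n_j} + \tfrac{1}{2}\log\tfrac{c_\alpha}{c_\beta}, \]
\[ \ba_i-\ub_i\eadd \log n_i - \log\!\prod_{j=1}^{i-1}\!\tfrac{m_j}{n_j} + \tfrac{1}{2}\log\tfrac{c_\beta}{c_\alpha}. \]
The first two diverge as $i\to\infty$ because $m_j/n_j > 2$ by \eqref{Eq:exponential}. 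For the third, I would multiply the right inequality of $(\calG_1)$, applied at index $i-1$, through by $\prod_{j=1}^{i-1}\tfrac{m_j}{n_j}$ (using $m_0 = 1$) to obtain $\prod_{j=1}^{i-1}\tfrac{m_j}{n_j} \leq \sqrt{n_i}$, so that $\ba_i-\ub_i$ is bounded below by $\tfrac{1}{2}\log n_i$ up to an additive constant, and this again diverges by \eqref{Eq:exponential}.

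The two strict inequalities $\bb_{i-1} < \ub_i$ and $\ba_i < \ua_{i+1}$ I would not deduce from the formulas at all: both pairs $(\beta_{i-1},\beta_i) = (\gamma_{2i-1},\gamma_{2i+1})$ and $(\alpha_i,\alpha_{i+1}) = (\gamma_{2i},\gamma_{2i+2})$ consist of curves at index distance $2$ in $\{\gamma_k\}$, hence they intersect (either by inspection of \figref{fivecurves} or by the inductive argument in the proof of \lemref{Quasi-Geodesic}). Proposition~\ref{Prop:Intervals}(1)--(2) then forces the associated intervals to be disjoint and to occur in increasing order along $\RR_+$.

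For the two $\ladd$ bounds, direct computation yields
\[ \ba_i - b_i - \tfrac{1}{2}\log\tfrac{c_\beta}{c_\alpha} \eadd \log n_i - \tfrac{1}{2}\log m_i - \log\!\prod_{j=1}^{i-1}\!\tfrac{m_j}{n_j}, \]
\[ \bb_i - a_{i+1} - \tfrac{1}{2}\log\tfrac{c_\alpha}{c_\beta} \eadd \log\!\prod_{j=1}^{i}\!\tfrac{m_j}{n_j} - \tfrac{1}{2}\log n_{i+1}. \]
After clearing denominators and using $m_0 = 1$, the left half of $(\calG_1)$ is exactly $2\log n_i \leq \log m_i + 2\log\prod_{j=1}^{i-1}\tfrac{m_j}{n_j}$, which makes the first right-hand side non-positive; and the same squaring trick applied to the right half of $(\calG_1)$ at index $i$ yields $\prod_{j=1}^{i}\tfrac{m_j}{n_j} \leq \sqrt{n_{i+1}}$, which makes the second right-hand side non-positive. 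The main subtlety of the whole argument is thus to recognize that $(\calG_1)$ has been manufactured precisely so that its two halves encode the inequalities $\log n_i - \tfrac{1}{2}\log m_i \leq \log\prod_{j=1}^{i-1}\tfrac{m_j}{n_j}$ and $\log\prod_{j=1}^{i}\tfrac{m_j}{n_j} \leq \tfrac{1}{2}\log n_{i+1}$ needed here; the two outer $\ll$ claims and the strict inequalities all follow from the cruder data in \eqref{Eq:exponential} and Proposition~\ref{Prop:Intervals} alone.
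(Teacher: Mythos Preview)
Your proposal is correct and follows essentially the same route as the paper: both arguments plug the formulas from Corollary~\ref{C:endpointvalues} and Lemma~\ref{L:interval locations} into the various differences, dispatch the strict inequalities via Proposition~\ref{Prop:Intervals}(1), and close the two $\ladd$ bounds with the two halves of $(\calG_1)$ (your ``squaring trick'' is the same telescoping identity $\log m_i=\sum_j\log\frac{m_j}{n_j}+\sum_j\log\frac{n_j}{m_{j-1}}$ the paper uses, just repackaged). One small point: you invoke $(\calG_1)$ for $\ub_i\ll\ba_i$, but the paper observes that $\log n_i-\sum_{j=1}^{i-1}\log\frac{m_j}{n_j}=\sum_{j=1}^i\log\frac{n_j}{m_{j-1}}$ identically, so this middle $\ll$ already follows from \eqref{Eq:exponential} alone---your use of $(\calG_1)$ there is valid but unnecessary.
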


\begin{proof}  The inequalities $\bb_{i-1} < \ub_i$ and $\ba_i < \ua_{i+1}$ follow from Proposition \ref{Prop:Intervals}(1) since $\I(\beta_{i-1},\beta_i) = 2 = \I(\alpha_i,\alpha_{i+1})$.
We first show $\ub_i \ll \ba_i$; the other two inequalities are similar.  From Corollary~\ref{C:endpointvalues} and Lemma~\ref{L:interval locations} we have
\[
\ba_i = \sum_{j=1}^{i-1} \log 2n_j + \log n_i - \frac{\log c_\alpha}2
\quad\text{and}\quad
\ub_i =  \sum_{j=1}^{i-1} \log 2m_j  - \frac {\log c_\beta}2.
\]
Hence, 
\begin{eqnarray*}
\ba_i - \ub_i  &\eadd & \sum_{j=1}^{i-1}\log \frac{n_j}{m_j}+\log{n_i}+\frac 12\log\frac{c_\beta}{c_\alpha}\\
& = &  \sum_{j=1}^i\log \frac{n_j}{m_{j-1}}+\frac12\log\frac{c_\beta}{c_\alpha} \geq  \sum_{j=1}^i 2j \log(4) +\frac12\log\frac{c_\beta}{c_\alpha}.
\end{eqnarray*}
As the right-hand side tends to infinity when $i \to \infty$, so does the left-hand side.  It follows that $\ub_i \ll \ba_i$, as required.

By a similar argument, we have
\[ \bb_i - \ua_{i+1} \eadd \sum_{j=1}^i \log \frac{m_j}{n_j} + \frac12\log\frac{c_\alpha}{c_\beta}.\]
As this also tends to infinity when $i \to \infty$, we have $\ua_{i+1} \ll \bb_i$, and shifting indices $\ua_i \ll \bb_{i-1}$.  This completes the proof of the first part of the lemma.

We will now prove the second assertion which is where we need
the condition \eqref{tagG1}. Since the proofs of both cases are similar, 
we will only show $\ba_i\ladd b_i+\frac 12 \log\frac{c_\beta}{c_\alpha}$. First note 
that we can write 
\[ 
\log m_i=\sum_{j=1}^{i}\log\frac{m_j}{n_j}+\sum_{j=1}^{i}\log\frac{n_j}{m_{j-1}}. 
\]
Hence, from Corollary~\ref{C:endpointvalues} and Lemma~\ref{L:interval locations} we have
\begin{align*}
b_i-\ba_i+\frac 12 \log\frac{c_\beta}{c_\alpha}
  &\eadd\sum_{j=1}^{i}\log \frac{m_j}{n_j}-\frac 1 2 \log m_i\\
  &= \frac 1 2 \sum_{j=1}^{i}\log \frac{m_j}{n_j}-
\frac 1 2 \sum_{j=1}^{i}\log\frac{n_j}{m_{j-1}}\overset{(\mathcal G_1)}{\geq}0.
\end{align*}
This finishes the proof.
\end{proof}

The next lemma deals with the case when either $c_\alpha=0$ or $c_\beta=0$.  
\begin{lemma} \label{Lem:Times_zero}
Suppose that conditions \eqref{tagG1} and \eqref{tagG2} hold. If $c_\alpha=0$, 
then for sufficiently large $i$ we have
$$
  \ua_i\ll \bb_{i-1} < \ub_{i} \ll a_i\ll  b_i\ll \ba_{i} \ll \ua_{i+1}\ll \bb_i.
$$
Similarly, if $c_\beta=0$, then 
$$
  \ub_{i} \ll \ba_{i} \ladd \ua_{i+1}\ll b_i\ll a_{i+1}\ll\bb_i\ll \ub_{i+1}\ll \ba_{i+1}.
 $$
\end{lemma}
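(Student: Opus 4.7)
The plan is direct verification via substitution: each quantity in the two chains is given up to additive error by a closed-form expression using Lemma~\ref{L:interval locations} and Corollary~\ref{C:endpointvalues}, so the differences $Y - X$ for consecutive terms $X \ll Y$ can be computed explicitly. Specifically, $a_i$ and $b_i$ are explicit in the $n_j, m_j$ (and $c_\alpha$ or $c_\beta$), and the endpoints satisfy $\ua_i \eadd a_i - \tfrac{1}{2}\log n_i$, $\ba_i \eadd a_i + \tfrac{1}{2}\log n_i$, $\ub_i \eadd b_i - \tfrac{1}{2}\log m_i$, $\bb_i \eadd b_i + \tfrac{1}{2}\log m_i$. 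After substitution, each difference collapses (with occasional help from the telescoping identity $\log m_k = \sum_{j=1}^k\log(m_j/n_j)+\sum_{j=1}^k\log(n_j/m_{j-1})$) to a sum involving the ratios $m_j/n_j$ and $n_{j+1}/m_j$, each term of which is bounded below by $\log(1/\epsilon) > \log 2$ thanks to \eqref{Eq:exponential}, and possibly a single such term subtracted off as a ``boundary correction''.

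For the case $c_\alpha = 0$ (so $c_\beta > 0$), the strict ordering $\bb_{i-1} < \ub_i$ is immediate from \propref{Intervals}(1), because $\beta_{i-1}$ and $\beta_i$ intersect so the flat cylinders $G_{i-1}$ and $G_i$ cannot coexist. The ``interior'' inequalities $\ub_i \ll a_i$, $a_i \ll b_i$, and $b_i \ll \ba_i$ each reduce to a sum whose $i$-th partial value grows at least linearly in $i$, hence diverge by \eqref{Eq:exponential} alone. The two ``extreme'' inequalities $\ua_i \ll \bb_{i-1}$ and $\ua_{i+1} \ll \bb_i$ reduce, up to additive constants, to expressions of the form
\[
\tfrac{1}{2}\sum_{j=1}^{k-1}\log\tfrac{m_j}{n_j} - \tfrac{1}{2}\log\tfrac{m_k}{n_k} + O(1)
\]
(for $k = i$ and $k = i+1$ respectively); divergence to $+\infty$ here requires precisely the joint use of \eqref{tagG2}, which yields $m_k/n_k = o(\prod_{j=1}^{k-1}n_j/m_{j-1})$, and \eqref{tagG1}, which bounds this product above by $\prod_{j=1}^{k-1}m_j/n_j$. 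The remaining assertion $\ba_i \ll \ua_{i+1}$ can similarly be read off from the computation of $a_{i+1} - a_i$ against the endpoint corrections.

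The case $c_\beta = 0$ is handled by the symmetric computation after swapping $\{n_j\}$ and $\{m_j\}$, adjusting indices to reflect that $\alpha_i=\gamma_{2i}$ precedes $\beta_i=\gamma_{2i+1}$ in the sequence $\{\gamma_i\}$. The one genuinely different assertion is the additive bound $\ba_i \ladd \ua_{i+1}$: here $c_\alpha > 0$, so substituting the formulas for $a_i$ and $a_{i+1}$ gives $a_{i+1}-a_i \eadd \log 2 + \tfrac{1}{2}(\log n_i + \log n_{i+1})$, and subtracting the $\tfrac{1}{2}(\log n_i + \log n_{i+1})$ contribution from the endpoint corrections leaves $\ua_{i+1}-\ba_i \eadd \log 2$, the promised additive bound.

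The main obstacle is bookkeeping. With many inequalities to verify, several partial sums appearing in each, and two cases to manage, sign and index errors are easy to make, and one must track carefully where \eqref{Eq:exponential} suffices and where the stronger growth conditions \eqref{tagG1} and \eqref{tagG2} are genuinely needed. Conceptually, however, the picture is simple: the interval lengths $|I_i|, |J_i|$ are $\eadd \log n_i$ and $\log m_i$, while the gaps between the balance times $a_i$, $b_i$, $a_{i+1}$, $b_{i+1}$ are governed by partial products of the ratios $m_j/n_j$ and $n_{j+1}/m_j$. The growth conditions are calibrated so that, in the appropriate ``extreme'' direction (determined by whether $c_\alpha$ or $c_\beta$ vanishes), one of these intervals grows wide enough to engulf the balance time of its opposite-labeled neighbor, producing the claimed chain.
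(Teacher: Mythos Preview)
Your proposal is correct and follows essentially the same route as the paper: compute each consecutive difference explicitly via Lemma~\ref{L:interval locations} and Corollary~\ref{C:endpointvalues}, then verify divergence using \eqref{Eq:exponential} for the interior inequalities and \eqref{tagG1}--\eqref{tagG2} for the extreme ones. Your handling of $\ub_i \ll a_i$ is in fact cleaner than the paper's---the difference is $\tfrac{1}{2}\sum_{j=1}^i\log\tfrac{n_j}{m_{j-1}}+\tfrac{1}{2}\log\tfrac{m_i}{n_i}$, both terms positive by \eqref{Eq:exponential}, so no appeal to \eqref{tagG2} is needed there.
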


\begin{proof}
Suppose that $c_\alpha = 0$, and hence $c_\beta \emul 1$.  Since $i(\beta_{i-1},\beta_i) = 2$, Proposition~\ref{Prop:Intervals}(1) implies $\bb_{i-1} < \ub_i$.

Applying Corollary~\ref{C:endpointvalues}, Lemma~\ref{L:interval locations}, and condition \eqref{Eq:exponential} we have
\begin{align*}
\ba_i - b_i &\eadd  \sum_{j=1}^{i-1}\log \frac{n_j}{m_j}+\log{n_i}+\frac 1 2 \sum_{j=1}^i  \log
\frac{m_j}{n_j} -\frac 1 2 \log m_i\\
&=\frac 1 2 \sum_{j=1}^{i-1}\log\frac {n_j}{m_j}+\frac 1 2 \log n_i
=\frac 1 2 \sum_{j=1}^{i}\log\frac {n_{j}}{m_{j-1}} \geq \frac12 \sum_{j=1}^i 2j,
\end{align*}
and thus $b_i \ll \ba_i$. 
Similarly
\[ \ua_{i+1} - \ba_i \eadd \frac12 \log \frac{m_{i+1}}{n_{i+1}} > \frac{2i+1}{2},\]
and
\[ b_i - a_i \eadd \frac12 \sum_{j=1}^{i-1} \log \frac{m_j}{n_j} > \frac12 \sum_{j=1}^{i-1} 2j+1\]
and hence $\ba_i \ll \ua_{i+1}$ and $a_i \ll b_i$.

Next we prove $\ub_i \ll a_i$.  For this, we appeal to Corollary~\ref{C:endpointvalues} and Lemma~\ref{L:interval locations} again, and write
\[ a_i - \ub_i \eadd \ba_i - b_i + \frac12 \log \frac{n_i}{m_i}.\]
Combining this with the computation for $\ba_i - b_i$ above we obtain
\[ a_i - \ub_i \eadd \frac 1 2 \sum_{j=1}^{i}\log\frac {n_{j}}{m_{j-1}} + \frac12 \log \frac{n_i}{m_i} = \frac12 \left( \sum_{j=1}^i \log \frac {n_j}{m_{j-1}} - \log \frac{m_i}{n_i} \right).\]
By \eqref{tagG2}, $\displaystyle{\frac{m_i}{n_i} = o \left( \prod_{j=1}^{i-1} \frac{n_j}{m_{j-1}} \right)}$ so by \eqref{tagG1}, $\displaystyle{\frac{m_i}{n_i} = o \left( \prod_{j=1}^i \frac{n_j}{m_{j-1}}\right)}$.  It follows that $a_i - \ub_i \to \infty$ as $i \to \infty$, proving $a_i \ll \ub_i$.

To show  $\ua_{i+1}\ll \bb_i$, we write
\begin{align*}
\bb_i-\ua_{i+1} 
&\eadd
 \sum_{j=1}^{i}\log\frac{m_j}{n_j}-\frac 1 2 \sum_{j=1}^{i+1}\log\frac{m_j}{n_j} \\
&=\frac 1 2 \sum_{j=1}^{i}\log\frac{m_j}{n_j}- \frac 1 2 \log \frac{m_{i+1}}{n_{i+1}}
\end{align*}
which similarly goes to infinity with $i$ by \eqref{Eq:exponential}, \eqref{tagG1}, and \eqref{tagG2}.  The only remaining inequality in the first string is $\ua_i \ll \bb_{i-1}$, which follows from this by shifting the index.

The second string of inequalities, when $c_\beta = 0$, are proved similarly.  This finishes the lemma.
\end{proof}

We will also need the following technical statement. 
\begin{lemma}\label{Lem:slow}
Suppose that conditions \eqref{tagG1} and \eqref{tagG2} hold. Then
\begin{enumerate}
\item 
\[
\log m_i=o\left(\prod_{j=1}^{i}\frac{n_j}{m_{j-1}}\right) 
\quad\text{and}\quad
\log n_{i+1}=o\left(\prod_{j=1}^{i}\frac{m_j}{n_j}\right)
\]
\item If $c_\alpha=0$ then 
\[
e^{\ua_{i+1}-b_i}=o\left(\prod_{j=1}^{i}\frac{n_j}{m_{j-1}}\right) \quad\text{as }i\to \infty.
\]
\item If $c_\beta=0$ then 
\[
e^{\ub_{i+1}-a_{i+1}}=o\left(\prod_{j=1}^{i}\frac{m_j}{n_j}\right) \quad\text{as }i\to \infty.
\]
\end{enumerate}
\end{lemma}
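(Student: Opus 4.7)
The plan is to prove all three parts by rewriting the relevant quantities in terms of the partial products
\[ P_i = \prod_{j=1}^{i}\frac{n_j}{m_{j-1}}, \qquad Q_i = \prod_{j=1}^{i}\frac{m_j}{n_j}, \]
and then combining the two hypotheses \eqref{tagG1} and \eqref{tagG2} with the trivial fact that $\log X = o(X)$ as $X\to\infty$. The key identity throughout will be the telescoping relation
\[ m_i = m_0 \cdot \prod_{j=1}^{i}\frac{n_j}{m_{j-1}}\cdot\prod_{j=1}^{i}\frac{m_j}{n_j} = P_i Q_i, \]
so that in particular $\log m_i=\log P_i+\log Q_i$, and similarly $\log n_{i+1} = \log(n_{i+1}/m_i)+\log P_i+\log Q_i$.

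For part (1), I would bound $\log Q_i$ using \eqref{tagG1}: $\log Q_i\le\log P_{i+1}=\log P_i+\log(n_{i+1}/m_i)$. The term $\log P_i$ is $o(P_i)$ for free, while \eqref{tagG2} applied with index shift gives $n_{i+1}/m_i=o(Q_{i-1})$, and \eqref{tagG1} allows the replacement $Q_{i-1}\le P_i$, so $\log(n_{i+1}/m_i)=o(P_i)$ as well. Adding these yields $\log m_i=o(P_i)$. The second estimate is symmetric: since \eqref{tagG1} gives $\log P_i\le\log Q_i=o(Q_i)$, the same logic produces $\log n_{i+1}=o(Q_i)$.

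For parts (2) and (3), the strategy is to compute the exponent exactly (up to additive error) using the formulas from Lemma~\ref{L:interval locations} (in the correct case, depending on whether $c_\alpha$ or $c_\beta$ vanishes) together with Corollary~\ref{C:endpointvalues}. After substitution and telescoping, I expect the messy sums to collapse. For instance, in the $c_\alpha=0$ case I would substitute $\ua_{i+1}\eadd a_{i+1}-\tfrac12\log n_{i+1}$ and the formulas for $a_{i+1}$ and $b_i$, and then use $m_i=P_iQ_i$ repeatedly to simplify. I expect the result to be
\[ \ua_{i+1}-b_i \eadd \tfrac12\log P_i + \tfrac12\log\tfrac{m_{i+1}}{n_{i+1}}, \]
so that $e^{\ua_{i+1}-b_i}\emul\sqrt{P_i\cdot(m_{i+1}/n_{i+1})}$. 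Then \eqref{tagG2} gives $m_{i+1}/n_{i+1}=o(P_i)$, so the product inside the square root is $o(P_i^2)$, hence its square root is $o(P_i)$, finishing part (2). Part (3) is entirely analogous: the analogous computation should produce $e^{\ub_{i+1}-a_{i+1}}\emul\sqrt{Q_i\cdot(n_{i+2}/m_{i+1})}$, and then the second half of \eqref{tagG2} gives $n_{i+2}/m_{i+1}=o(Q_i)$.

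The main obstacle will be the bookkeeping in parts (2) and (3): cancelling the terms coming from $\ua_{i+1}-b_i$ carefully enough to extract the clean form $\tfrac12\log P_i+\tfrac12\log(m_{i+1}/n_{i+1})$. I expect this to require re-grouping sums, using $\log n_i=\log(n_i/m_{i-1})+\log m_{i-1}=\log(n_i/m_{i-1})+\log P_{i-1}+\log Q_{i-1}$, and being careful about summation index shifts (especially distinguishing $\sum\log(m_j/n_j)$ from $\sum\log(n_{j+1}/m_j)$ in the $c_\beta=0$ formula for $b_{i+1}$). Once the clean form is obtained, the conclusion is immediate from \eqref{tagG2}.
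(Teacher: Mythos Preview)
Your proposal is correct and follows essentially the same route as the paper: both arguments use the telescoping identity $m_i = P_iQ_i$ together with \eqref{tagG1}, \eqref{tagG2}, and $\log x = o(x)$ for part~(1), and for parts~(2)--(3) both compute $\ua_{i+1}-b_i$ (resp.\ $\ub_{i+1}-a_{i+1}$) from Lemma~\ref{L:interval locations} and Corollary~\ref{C:endpointvalues} and simplify to exactly the clean forms you anticipated, $\tfrac12\log P_i+\tfrac12\log\tfrac{m_{i+1}}{n_{i+1}}$ and $\tfrac12\log Q_i+\tfrac12\log\tfrac{n_{i+2}}{m_{i+1}}$, after which \eqref{tagG2} finishes. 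The bookkeeping obstacle you flag is real but resolves just as you predict.
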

\begin{proof}
We will prove the first claim in $(1)$, the second part is proved similarly.
Rewriting $\log m_i$ and applying \eqref{tagG1} and \eqref{tagG2} we have, for large $i$,
$$
\log m_i=\log\frac{m_i}{n_i}+\sum_{j=1}^{i-1}\log\frac{m_j}{n_j}+\sum_{j=1}^{i}\log\frac{n_j}{m_{j-1}} \leq 3 \sum_{j=1}^i \log \frac{n_j}{m_{j-1}}.
$$
Since $\underset{x\to \infty}{\lim}\frac{\log x}{x}=0$
we obtain  
$$\log m_i=o\left(\prod_{j=1}^{i}\frac{n_j}{m_{j-1}}\right).$$
To prove $(2)$, we compute 
\begin{align*}
\ua_{i+1}-b_i
&\eadd \sum_{j=1}^{i}\log\frac{n_j}{m_{j-1}}+\frac 1 2  \sum_{j=1}^{i+1}\log \frac{m_j}{n_j}-\frac 1 2 \log m_i\\
&= \frac 1 2 \sum_{j=1}^{i}\log\frac{n_j}{m_{j-1}} +\frac 1 2\log \frac{m_{i+1}}{n_{i+1}}
=\frac 1 2 \log\left(\frac{m_{i+1}}{n_{i+1}} \prod_{j=1}^{i}\frac{n_j}{m_{j-1}}\right)
\end{align*}
and so using \eqref{tagG2} we have
$$e^{\ua_{i+1}-b_i}=\left(\frac{m_{i+1}}{n_{i+1}}\right)^{\frac 1 2}\left(\prod_{j=1}^{i}\frac{n_j}{m_{j-1}}\right)^{\frac 1 2}=o\left(\prod_{j=1}^{i}\frac{n_j}{m_{j-1}}\right).$$
The proof of $(3)$ is similar.  
\end{proof}

\section{Limit set in Thurston boundary} 

Let $\gamma$ be a curve. We would like to estimate the length of 
$\gamma$ in various points along a \Teich geodesic.  The geometry of a point
$X\in \calT(S)$ is simple since $S$ is a five-times-punctured sphere.
There are two disjoint curves of bounded length and 
the length of $\gamma$ can be approximated by the number of times it crosses 
these curves and the number of  times it twists around them. We separate the 
contribution to the length of $\gamma$ from crossing and twisting around each curve.

Recall that, for a curve $\alpha$ in $X$, $\twist_\alpha(X, \gamma)$ is the number of times 
$\gamma$ twists around $\alpha$ relative to the arc perpendicular to $\alpha$. 
Also $\width_X(\alpha)$ is the width of the collar around $\alpha$ from the 
Collar theorem \cite{buser:GSC}. We have
\[
\width_X(\alpha) \eadd 2\log\frac{1}{\Hyp_X(\alpha)}.
\]
For a curve $\alpha$ that has bounded length or is short in $X$, define
the \emph{contribution to length of $\gamma$ coming from $\alpha$} to be
$$
\Hyp_X(\alpha, \gamma) =
\I(\gamma, \alpha) \Big[ \width_X(\alpha)
+ \twist_\alpha(X, \gamma) \Hyp_X(\alpha) \Big].
$$

The following estimates the hyperbolic length of a curve in terms of the lengths of the shortest curves in $X$.  See \cite{rafi:LT}, for example.
\begin{theorem} \label{T:hyperbolic-contribution}
If $\alpha,\beta$ are the shortest curves in $X$, then
\[
\Big | \Hyp_X(\gamma)  - \Hyp_X(\alpha, \gamma)  - \Hyp_X (\beta, \gamma) \Big| =
O\big(\I(\alpha, \gamma)+ \I(\beta, \gamma) \big).
\]
\end{theorem}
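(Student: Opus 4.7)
The plan is to decompose $X$ into the collars about $\alpha$ and $\beta$ and a bounded-geometry complement, and sum the length contributions of the geodesic representative of $\gamma$ on each piece.  I would first reduce to the case that both $\alpha$ and $\beta$ are short (below a fixed Margulis constant).  If neither is short, $X$ lies in a fixed thick part of \Teich space and the entire statement reduces to the classical coarse comparison $\Hyp_X(\gamma) \emul \I(\alpha,\gamma) + \I(\beta,\gamma)$; if only one is short, the argument that follows applies with the non-short collar term absorbed into the error.  When both are short, the collar lemma guarantees that their embedded collars---and hence the curves themselves---are disjoint, so $\{\alpha,\beta\}$ is a pants decomposition of $S$.

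Let $A$ and $B$ be the standard collars of $\alpha$ and $\beta$ and put $T = S \setminus (A \cup B)$.  Each connected component of $T$ is a pair of pants whose three cuffs are trimmed by either collar boundaries or standard cusp horoball neighborhoods; by the collar lemma and the geometry of cusps, each component has diameter bounded in terms of the topology of $S$ alone.  The geodesic representative of $\gamma$ crosses $A$ in exactly $\I(\alpha,\gamma)$ arcs joining the two boundary components of $A$, crosses $B$ in $\I(\beta,\gamma)$ analogous arcs, and meets $T$ in arcs with endpoints on $\partial A \cup \partial B$; the total number of $T$-arcs is $O(\I(\alpha,\gamma)+\I(\beta,\gamma))$ since these endpoints are matched to the endpoints of the collar arcs.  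Each $T$-arc has length $O(1)$, giving $\mathrm{length}(\gamma \cap T) = O(\I(\alpha,\gamma)+\I(\beta,\gamma))$.

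The main step is to estimate $\mathrm{length}(\gamma \cap A)$; the estimate for $B$ is identical.  Model $A$ in Fermi coordinates $(u,v)$ about $\alpha$, with metric $\cosh^2(v)\, du^2 + dv^2$ and $|v| \leq \width_X(\alpha)/2$.  Each arc of $\gamma \cap A$ is a geodesic between the two boundary circles; by the definition of $\twist_\alpha(X,\gamma)$ via intersections of essential lifts in the annular cover, such an arc has vertical extent $\width_X(\alpha)$ and horizontal displacement $\twist_\alpha(X,\gamma)\cdot \Hyp_X(\alpha) + O(\Hyp_X(\alpha))$.  A direct Clairaut-integral computation in this metric gives per-arc length equal to $\width_X(\alpha)+ \twist_\alpha(X,\gamma)\cdot \Hyp_X(\alpha)+O(1)$.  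Summing over the $\I(\alpha,\gamma)$ arcs,
\[
\mathrm{length}(\gamma \cap A) = \Hyp_X(\alpha,\gamma) + O(\I(\alpha,\gamma)),
\]
and analogously for $B$.  Combining the three contributions yields
\[
\bigl|\Hyp_X(\gamma) - \Hyp_X(\alpha,\gamma) - \Hyp_X(\beta,\gamma)\bigr| = O(\I(\alpha,\gamma)+\I(\beta,\gamma)),
\]
the desired bound.  The only technically nontrivial step is the per-arc hyperbolic-cylinder calculation, which is a classical computation underlying Minsky's product-region theorem and the length estimates cited in the statement.
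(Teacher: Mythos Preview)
The paper does not prove this theorem; it is stated with the citation ``See \cite{rafi:LT}, for example'' and used as a black box.  So there is no proof in the paper to compare against.

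Your outline is the standard route to such estimates and is essentially what one finds in the cited literature: decompose along the collars of the short system, use the bounded geometry of the complementary pairs of pants to control the off-collar arcs by a constant times the number of crossings, and compute the per-arc length inside a collar via Fermi coordinates.  The Clairaut computation you allude to does give length $= \width_X(\alpha) + \twist_\alpha(X,\gamma)\,\Hyp_X(\alpha) + O(1)$ per crossing arc, which is exactly the per-arc content of the formula for $\Hyp_X(\alpha,\gamma)$.  A couple of small points worth tightening if you write this out in full: (i) you should note that since $\gamma$ is essential its geodesic representative avoids the standard cusp neighborhoods, so the $T$-arcs really do have uniformly bounded length; (ii) the edge case $\gamma \in \{\alpha,\beta\}$ (where both intersection numbers vanish) is degenerate for the statement as written and should be excluded or absorbed into an additive constant---in the paper's applications $\gamma$ always crosses the short system, so this never arises.
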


This theorem essentially says that length of each component of restriction of 
$\gamma$ to the standard annulus around $\alpha$ is the above estimate up to
an additive error and the length of each arc outside of the two 
annuli is also universally bounded. The error is a fixed multiple of
the number of intersections which is the number of these components. 

To analyze how the length of a curve changes along a \Teich geodesic, 
it is enough to know what curves are short at any given time and
then to analyze the contribution to lengths coming from each short curve. 
Let $\gamma$ be any closed curve. The following two lemmas
will provide the needed information about the length of short curves
at any time $t$ and the amount that $\gamma$ twists about curves $\alpha_i$ 
and $\beta_i$. 

\begin{lemma}[Theorem 1.3 in \cite{rafi:CM}]\label{Lem:twisting} 
For a fixed $\gamma$ and large enough $i$ we have  
$$
\twist_{\beta_i}(X_t, \gamma) = 
\begin{cases}
0   \pm O\big(1/\Hyp_t(\beta_i )\big)   & t \leq b_i \\
m_i \pm O\big(1/\Hyp_t(\beta_i )\big)  & t \geq b_i
\end{cases}
$$
A similar statement holds for curves $\alpha_i$, twisting numbers $n_i$ and times $a_i$. 
\end{lemma}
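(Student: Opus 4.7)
The plan is to reduce the claim about $\gamma$ to one about $\mu$, and then analyze $\twist_{\beta_i}(X_t,\mu)$ directly using the flat structure. For the reduction, I would use that $\{\gamma_i\}$ is a quasi-geodesic in $\calC(S)$ passing through $\beta_i$, so for large $i$ any $\calC(S)$--geodesic connecting the fixed curves $\mu$ and $\gamma$ stays a bounded distance from $\beta_i$. By \thmref{BGIT} applied to the annular subsurface around $\beta_i$, this yields $\twist_{\beta_i}(\mu,\gamma) = O(1)$. The triangle inequality in the annular arc complex of $\beta_i$ then gives
\[ \twist_{\beta_i}(X_t,\gamma) \eadd \twist_{\beta_i}(X_t,\mu), \]
so it suffices to prove the claim with $\mu$ in place of $\gamma$.

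Next I would pin down the endpoint values. Since $X_0$ is thick and $\mu$ has bounded length in $X_0$, we have $\twist_{\beta_i}(X_0,\mu) = O(1)$. Before the active interval $J_i$, the curve $\beta_i$ is not short, so the hyperbolic perpendicular cannot accumulate twist, and $\twist_{\beta_i}(X_{\ub_i},\mu) = O(1)$. After $J_i$, the perpendicular direction in $X_t$ tracks the (flat) vertical direction, so $\twist_{\beta_i}(X_{\bb_i},\mu) \eadd \twist_{\beta_i}(\nu,\mu) \eadd m_i$ by \lemref{twist2}. Thus the twist must change by $\eadd m_i$ across $J_i$, and the task reduces to showing the change is concentrated near $t = b_i$ with the claimed error.

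The heart of the argument is the flat-cylinder analysis within the active interval. By \propref{Intervals}, $\beta_i$ is the core of a flat cylinder $G_i$ in $(X_t,q_t)$ of modulus $\geq M$ throughout $J_i$. At each such $t$, the core makes angle $\theta_t$ with the horizontal, where $\tan\theta_t = v_t(\beta_i)/h_t(\beta_i)$, so $\theta_t$ decreases monotonically from near $\pi/2$ at $\ub_i$ to near $0$ at $\bb_i$, passing through $\pi/4$ at the balanced time $b_i$. For $t < b_i$ the flat perpendicular to $\beta_i$ lies closer to the horizontal foliation (whose twist about $\beta_i$ relative to $\mu$ is $O(1)$ since $\mu$ has bounded length at $X_0$), while for $t > b_i$ it lies closer to the vertical foliation $\nu$ (whose twist relative to $\mu$ is $\eadd m_i$). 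One then verifies that the hyperbolic perpendicular at $X_t$ agrees with the flat perpendicular up to a bounded amount of twist, with an ambiguity arising from the fact that the hyperbolic collar of $\beta_i$ has modulus $\eadd \log(1/\Hyp_t(\beta_i))$, which translates into a twist uncertainty of $O(1/\Hyp_t(\beta_i))$.

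The main obstacle will be making the sharp transition at $b_i$ quantitative: one must show not merely that $\theta_t$ passes through $\pi/4$ at the balanced time, but that the twist jumps from $O(1/\Hyp_t(\beta_i))$ to $m_i - O(1/\Hyp_t(\beta_i))$ with the prescribed error. This requires comparing hyperbolic and flat geometry on the collar of the short curve using the explicit asymptotic $\ell_t(\beta_i) \emul \ell_{b_i}(\beta_i)\cosh(t-b_i)$ from \propref{Intervals}(4), and is precisely the content of \cite[Theorem 1.3]{rafi:CM}. The analogous statement for $\alpha_i$, $n_i$, and $a_i$ follows by the same argument with the roles of the even and odd subsequences interchanged.
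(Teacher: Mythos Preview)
The paper gives no proof of this lemma at all: it is stated as a direct citation of \cite[Theorem~1.3]{rafi:CM} and used as a black box. Your proposal is therefore not competing with any argument in the paper; it is an outline of what happens inside the cited reference, and you yourself acknowledge at the end that the quantitative core ``is precisely the content of \cite[Theorem~1.3]{rafi:CM}.'' In that sense your write-up is consistent with the paper's approach, just more expansive.

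One genuine slip in your reduction step: you say that for large $i$ any $\calC(S)$--geodesic from $\mu$ to $\gamma$ ``stays a bounded distance from $\beta_i$.'' This is backwards. The curves $\mu$ and $\gamma$ are fixed, so any geodesic between them is a fixed bounded set in $\calC(S)$, while $d_S(\beta_i,\mu)\to\infty$. What you need (and what actually feeds into \thmref{BGIT}) is the opposite: for large $i$, $\beta_i$ is far from every vertex on such a geodesic, hence every vertex intersects $\beta_i$, and then the Bounded Geodesic Image Theorem yields $\twist_{\beta_i}(\mu,\gamma)=O(1)$. Your conclusion and the subsequent triangle-inequality step are correct; only the stated justification is inverted. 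The rest of your sketch (endpoint values, flat-cylinder geometry, the role of the balanced time) is a fair summary of the mechanism behind the cited theorem, though none of it is needed for the present paper.
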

Note that the statement makes sense at $t=b_i$ since, as the following lemma recalls,
 at this time the length of $\beta_i$ is $\frac{1}{m_i}$ up to a bounded multiplicative error.
We also have  

\begin{lemma} \label{Lem:length} 
The function $\Hyp_t(\alpha_i)$ obtains its minimum within a uniform
distance of $a_i$ where $\Hyp_{a_i}(\alpha_i) \emul \frac 1{n_j}$. It changes at most 
exponentially fast. There is a time between $\ba_{i-1}$ and $\ua_i$ 
where the lengths of $\alpha_{i-1}$ and $\alpha_i$ are equal and both are 
comparable to $1$. In particular, when $\ba_{i-1} \eadd \ua_i$, for 
$\ua_i \leq t \leq a_i$, we have
\begin{equation} \label{Eq:width}
\width_t(\alpha_i) \ladd t- \underline a_i. 
\end{equation}
A similar statement holds for curves $\beta_i$, twisting numbers $m_i$ and times $b_i$. 
\end{lemma}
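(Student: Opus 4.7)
The first two assertions come essentially from \propref{Intervals} combined with standard Teichm\"uller theory. Part (5) of that proposition already gives $\Hyp_{a_i}(\alpha_i) \emul 1/n_i$. To locate the minimum of $\Hyp_t(\alpha_i)$, note that by part (4) the flat length $\ell_t(\alpha_i)$ is minimized at $t=a_i$; since the large-modulus flat cylinder about $\alpha_i$ (from part (1)) forces the extremal length to be comparable to $\ell_t(\alpha_i)^2$ and, by Maskit's comparison, to the hyperbolic length once $\alpha_i$ is short, the function $\Hyp_t(\alpha_i)$ also attains its minimum within uniformly bounded distance of $a_i$. The exponential rate of change of $\Hyp_t(\alpha_i)$ is Wolpert's lemma: $|\tfrac{d}{dt}\log \Hyp_t(\gamma)|$ is uniformly bounded along any Teichm\"uller geodesic, so $\Hyp_t(\alpha_i)$ changes by at most a multiplicative factor $e^{O(|t-s|)}$ between any two times $s,t$.

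For the crossover claim in $[\ba_{i-1}, \ua_i]$: by the definition of the short intervals in \propref{Intervals}, the flat modulus about $\alpha_{i-1}$ equals the threshold $M$ at $t=\ba_{i-1}$ while the flat modulus about $\alpha_i$ equals $M$ at $t=\ua_i$, so $\Hyp_{\ba_{i-1}}(\alpha_{i-1}) \emul 1$ and $\Hyp_{\ua_i}(\alpha_i) \emul 1$. Outside their respective short intervals both curves have length $\gmul 1$, so on $[\ba_{i-1}, \ua_i]$ each hyperbolic length is squeezed between two positive constants. The function $t \mapsto \Hyp_t(\alpha_i) - \Hyp_t(\alpha_{i-1})$ is continuous, with opposite signs at the endpoints $\ba_{i-1}$ (where $\alpha_{i-1}$ is still just barely short and $\alpha_i$ is not yet short) and $\ua_i$ (where the roles have reversed). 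The intermediate value theorem therefore produces a time in this interval where the two lengths coincide, and at such a time the common value must be of order $1$.

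For the width bound, assume $\ba_{i-1} \eadd \ua_i$; by the previous paragraph this forces $\Hyp_{\ua_i}(\alpha_i) \emul 1$ and hence $\width_{\ua_i}(\alpha_i) = O(1)$. Combining this base value with the linear bound on $\log(1/\Hyp_t(\alpha_i))$ coming from the rate-of-change step gives $\log(1/\Hyp_t(\alpha_i)) \ladd t - \ua_i$ throughout $[\ua_i, a_i]$, and then $\width_t(\alpha_i) \ladd t-\ua_i$ follows from the Collar-theorem identity $\width_t(\alpha_i) \eadd 2\log(1/\Hyp_t(\alpha_i))$. The $\beta_i$ statement is entirely symmetric. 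The main technical subtlety is tracking constants carefully through the two comparisons (extremal vs.\ hyperbolic length, and the precise coefficient in Wolpert's bound) to match the additive normalization used in the statement; this is exactly the kind of bookkeeping developed in the Rafi--Minsky framework \cite{rafi:SC, rafi:HT} that \propref{Intervals} summarizes.
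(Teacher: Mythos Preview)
Your treatment of the first two assertions matches the paper's: the minimum-length statement is a restatement of \propref{Intervals}, and the exponential rate of change is Wolpert's inequality.

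The gap is in your crossover argument. From ``outside their short intervals both curves have length $\gmul 1$'' you conclude that on $[\ba_{i-1},\ua_i]$ each hyperbolic length is ``squeezed between two positive constants.'' But $\gmul 1$ is only a lower bound; nothing you have said bounds either length from above on this interval. A curve that is not short can still be arbitrarily long --- for instance $\Hyp_0(\alpha_i) \emul \prod_{j<i} 2n_j$. So even if the intermediate value theorem produces a crossing, you have not shown that the common value there is $\emul 1$, and that is precisely what the lemma asserts and what is used later (in particular to deduce \eqref{Eq:width}).

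The paper takes a different route that supplies this upper bound directly, and it does not use the intermediate value theorem. It fixes $a_{i-1,i}$, the first time after $a_{i-1}$ at which $\Hyp_t(\alpha_{i-1})$ becomes comparable to $1$, and argues that $\Hyp_{a_{i-1,i}}(\alpha_i)$ is also bounded. The mechanism is: once $\alpha_{i-1}$ stops being short it never becomes short again \cite{rafi:CM}, so all twisting about $\alpha_{i-1}$ has already occurred and $\twist_{\alpha_{i-1}}(X_{a_{i-1,i}},\alpha_i)=O(1)$. At this moment the only other short curve is a $\beta_j$ disjoint from $\alpha_i$ (by the interval arrangement of Lemmas~\ref{Lem:Times_non_zero}--\ref{Lem:Times_zero}). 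Since $\alpha_i$ meets $\alpha_{i-1}$ only twice, does not twist around it, and misses the other short curve, the length formula of Theorem~\ref{T:hyperbolic-contribution} gives $\Hyp_{a_{i-1,i}}(\alpha_i)\emul 1$. This geometric step --- which genuinely uses the combinatorics of the sequence $\{\gamma_i\}$ on the five-punctured sphere --- is exactly the missing upper bound in your approach.

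Your final paragraph on the width bound is fine once $\Hyp_{\ua_i}(\alpha_i)\emul 1$ is in hand; the paper treats \eqref{Eq:width} as an immediate consequence in the same way.
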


\begin{proof}
The minimum length statement is a restatement from \propref{Intervals}.
The fact that length of curve grows at most exponentially fast is due
to Wolpret \cite{wolpert:LS}. It remains to show that the lengths of $\alpha_{i-1}$ and 
$\alpha_i$ are simultaneously bounded at some time 
between $\ba_{i-1} $ and $\ua_i$. This follows from the theory outlined in
\cite{rafi:HT} and its predecessors. 

Let $a_{i-1, i}$ be the first time after $a_{i-1}$ where length of $\alpha_{i-1}$
is comparable to  $1$. 
Then, the length of $\alpha_{i-1}$ will never be short after that 
\cite[Theorem 1.2]{rafi:CM}. 
This means $\twist_{\alpha_{i-1}}(X_{a_{i-1,i}}, \alpha_i)$ is uniformly bounded,
otherwise, $\alpha_{i-1}$ would have to get short again (to do the twisting) 
before $\alpha_i$ gets short (see \cite{rafi:SC}). Simply put, all the twisting around 
$\alpha_{i-1}$ happens during the interval $I_{i-1}$ and the curves
that are short afterwards will never look twisted around $\alpha_{i-1}$.

To summarize, at time $a_{i-1,i}$, the curve $\alpha_{i-1}$ has a
length comparable to 1 and $\alpha_i$, which intersects $\alpha_{i-1}$ twice, 
does not twist around $\alpha_{i-1}$. By \lemref{Times_non_zero} and 
\lemref{Times_zero} the other short curve in the surface is $\beta_i$
that is disjoint from $\alpha_i$. Therefore, $\alpha_i$ also has length comparable 
to 1. 
\end{proof}

\thmref{intro} stated in  the introduction is a direct consequence of the following theorem.
\begin{theorem}\label{limit}
Suppose $\{n_i\}$ and
$\{m_i\}$ satisfy condition \eqref{tagG1}.
Then, if  $c_\alpha$ and $c_\beta$ are non-zero, the limit set in $\mathcal{PML}(S)$ of the 
corresponding ray $g$ is the entire simplex spanned by $\bnu_{\alpha}$ and $\bnu_{\beta}$.
If, in addition \eqref{tagG2} holds, then the limit set of $g$ is the entire simplex 
 for any value of $c_\alpha$ and $c_\beta$.
\end{theorem}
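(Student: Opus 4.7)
Our goal is to show that every $[\bar\lambda] \in \Delta(\nu)$, with $\bar\lambda = c'_\alpha \bnu_\alpha + c'_\beta \bnu_\beta$ and $c'_\alpha, c'_\beta \geq 0$ not both zero, arises as a subsequential limit of $g(t)$ in the Thurston boundary, regardless of the specific weights $c_\alpha, c_\beta$ defining the ray. Concretely, we will produce times $t_k \to \infty$ such that for every simple closed curve $\gamma$,
\[
\frac{\Hyp_{t_k}(\gamma)}{\Hyp_{t_k}(\mu)} \longrightarrow \frac{\I(\gamma,\bar\lambda)}{\I(\mu,\bar\lambda)}.
\]

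The length formula is the basic tool. For $t$ large, Lemmas~\ref{Lem:Times_non_zero} and~\ref{Lem:Times_zero} imply that the two shortest curves on $X_t$ form a consecutive disjoint pair $\{\alpha_*, \beta_*\}$ drawn from our sequence (either $\{\alpha_i,\beta_i\}$ or $\{\beta_i, \alpha_{i+1}\}$, depending on where $t$ sits). Theorem~\ref{T:hyperbolic-contribution}, combined with the twist estimate in Lemma~\ref{Lem:twisting} and the length behavior in Lemma~\ref{Lem:length}, yields
\[
\Hyp_t(\gamma) \eadd A_t\,\I(\gamma,\alpha_*) + B_t\,\I(\gamma,\beta_*) + O\!\left(\I(\gamma,\alpha_*) + \I(\gamma,\beta_*)\right),
\]
where $A_t = \width_t(\alpha_*) + n_*\,\Hyp_t(\alpha_*)$ and $B_t = \width_t(\beta_*) + m_*\,\Hyp_t(\beta_*)$ depend only on $t$, not on the test curve $\gamma$. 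Both factors tend to infinity (they are at least on the order of $\log$ of the relevant twist parameter at balance time), so the $O(\cdot)$ error is negligible relative to the main terms. Next, Proposition~\ref{Prop:evenoddconverge} gives for fixed $\gamma$ and large $i$ that $\I(\gamma,\alpha_i) \emul \I(\mu,\alpha_i)\,\I(\gamma,\bnu_\alpha)$ and $\I(\gamma,\beta_i) \emul \I(\mu,\beta_i)\,\I(\gamma,\bnu_\beta)$. Substituting these into the length formula and dividing by $\Hyp_{t_k}(\mu)$ reduces the desired convergence to selecting $t_k$ so that
\[
\frac{A_{t_k}\,\I(\mu,\alpha_{i(k)})}{B_{t_k}\,\I(\mu,\beta_{i(k)})} \longrightarrow \frac{c'_\alpha}{c'_\beta}.
\]

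Within each active window (an overlap of $I_i$ and $J_i$, or of $J_i$ and $I_{i+1}$), one of $A_t, B_t$ grows exponentially while the other decays or is bounded, since as $t$ crosses a balance time the width of one curve shrinks and the twist contribution from the other turns on. Hence $A_t/B_t$ is a continuous monotone function of $t$ sweeping across a large range. Combined with the intersection-number growth from Corollary~\ref{C:intersectionswithmu} and the balance-time formulas of Lemma~\ref{L:interval locations}, this range realizes every finite positive ratio $c'_\alpha/c'_\beta$ by an intermediate-value argument, giving the case $c'_\alpha, c'_\beta > 0$ under condition~\eqref{tagG1} alone.

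The extremal cases $c'_\alpha = 0$ or $c'_\beta = 0$ require driving the ratio to $0$ or $\infty$ along a subsequence, which demands both a long enough active window and enough separation between successive intervals. This is exactly where condition~\eqref{tagG2} enters: Lemma~\ref{Lem:slow} shows that the logarithmic shifts in the balance times arising in Lemma~\ref{Lem:Times_zero} are dominated by the products governing $\I(\mu,\alpha_i)$ and $\I(\mu,\beta_i)$, so contributions from curves outside the current pair remain negligible. The main obstacle I anticipate is precisely this bookkeeping: keeping the error term in the length formula dominated as the active index $i(k)$ changes with $k$, and verifying that the older, no-longer-short curves $\alpha_j,\beta_j$ for $j < i(k)$ do not reintroduce length through residual width or twist contributions. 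Once this is pinned down, what remains is a routine continuity argument, with all the underlying estimates supplied by Lemmas~\ref{Lem:twisting},~\ref{Lem:length},~\ref{Lem:slow}, and the interval-location lemmas.
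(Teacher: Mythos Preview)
Your approach differs from the paper's in a way that creates unnecessary work and leaves a real gap. The paper's key simplification, which you miss, is that the limit set $\Lambda(g)$ of a ray is always \emph{connected} (it is the nested intersection of the closures $\overline{g([t,\infty))}$) and is already known to lie in $\Delta(\nu)$. Hence it suffices to show only that the two \emph{endpoints} $\bnu_\alpha$ and $\bnu_\beta$ lie in $\Lambda(g)$; by the symmetry of the construction it even suffices to prove $\bnu_\alpha \in \Lambda(g)$. This reduces the task to driving a single ratio to zero, rather than engineering times that realize every prescribed target ratio $c'_\alpha/c'_\beta$.

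Concretely, the paper exhibits an explicit sequence ($t_i = \ba_i$ when $c_\alpha \neq 0$, or the time $a_{i,i+1}$ of Lemma~\ref{Lem:length} when $c_\alpha = 0$) and shows $\Hyp_{t_i}(\beta_i,\gamma)/\Hyp_{t_i}(\alpha_i,\gamma) \to 0$. Then the desired limit $\Hyp_{t_i}(\gamma)/\Hyp_{t_i}(\gamma') \to \I(\gamma,\bnu_\alpha)/\I(\gamma',\bnu_\alpha)$ follows \emph{exactly}, because the bracket $[\width_t(\alpha_i) + \twist_{\alpha_i}\cdot\Hyp_t(\alpha_i)]$ is independent of the test curve and cancels in the ratio, leaving $\I(\gamma,\alpha_i)/\I(\gamma',\alpha_i)$.

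Your intermediate-value argument, by contrast, has a genuine gap. The formula $A_t = \width_t(\alpha_*) + n_*\,\Hyp_t(\alpha_*)$ is only valid \emph{after} the balance time $a_*$ (before it, Lemma~\ref{Lem:twisting} gives twist $O(1/\Hyp_t(\alpha_*))$, not $n_*$), so $A_t/B_t$ is not the continuous monotone function you describe, and you have not verified that its range, after multiplication by $\I(\mu,\alpha_*)/\I(\mu,\beta_*)$, actually covers the target ratio. More seriously, even if you hit the target ratio on the nose, your substitution $\I(\gamma,\alpha_i) \emul \I(\mu,\alpha_i)\,\I(\gamma,\bnu_\alpha)$ carries multiplicative errors that do not cancel when both $A$ and $B$ terms survive; you would only conclude convergence of $\Hyp_{t_k}(\gamma)/\Hyp_{t_k}(\mu)$ up to bounded multiplicative error, which does not identify a single projective limit. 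The paper's reduction to the extremal measures avoids this entirely.
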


\begin{proof}[Proof of Theorem \ref{limit}] 
It is enough to show that the limit set contains $\bnu_\alpha$. 
Then, because of the symmetry, we will also have that $\bnu_\beta$ is 
in the limit set.  Since the limit set is connected and consists only of 
laminations topologically equivalent to $\nu$, it must be the entire interval. 

We need to find a 
sequence of times $t_i\to\infty$ such that for  any two simple closed curves $\gamma$ and $\gamma'$ 
\[
\frac{\Hyp_{t_i}(\gamma)}{\Hyp_{t_i}(\gamma')}\to 
  \frac{\I(\bnu_\alpha,\gamma)}{\I(\bnu_\alpha,\gamma')}.
\]
 
\subsection*{Case 1.} Assume $c_\alpha\neq 0$ and that
\eqref{tagG1} holds.  If $c_\beta = 0$, then also assume \eqref{tagG2}.
\begin{proof}
We will prove the statement for the sequence of times $\{\ba_i\}$.  By \lemref{Times_non_zero} and
\lemref{Times_zero}, we have $\ba_i \eadd \ua_{i+1}$ and hence, 
by \lemref{length}, $\Hyp_{\ba_i}(\alpha_i)\emul 1$.
Let $\gamma$ be a simple closed curve. We will estimate the hyperbolic length of $\gamma$ at time $\ba_i$. 

Let $i$ be large enough so that the inequalities in \eqref{tagG1} hold and so that
$$
\I(\gamma, \bnu_\alpha) \emul \frac{\I(\gamma, \alpha_i)}{\I(\mu, \alpha_i)}
\qquad\text{and}\qquad
\I(\gamma, \bnu_\beta) \emul \frac{\I(\gamma, \beta_i)}{\I(\mu, \beta_i)}.
$$
Then, applying \eqnref{markn} and \lemref{twisting}, 
\begin{eqnarray}
\Hyp_{\ba_i} (\alpha_i, \gamma) 
 &= &\I(\gamma, \alpha_i) \big[ O(1)+ (n_i \pm O(1))\Hyp_{\ba_i}(\alpha_i) \big]  \label{E:star} \\
 &\emul &  \I(\gamma, \bnu_\alpha) \I(\mu, \alpha_i) n_i \notag \\
 &\emul & \I(\gamma, \bnu_\alpha) \prod_{j=1}^i 2n_j.  \notag
\end{eqnarray}

By \lemref{Times_non_zero} and  \lemref{Times_zero},  we have
$\ub_i\ll\ba_i\ll\bb_i$, therefore $\beta_i$ is short at $\ba_i$. Hence, we need 
to compute the contribution to the length of $\gamma$ from $\beta_i$. 
Note that, depending on whether $c_\beta$ is zero or not, we have:
\[
\ba_i -b_i\ladd\frac 12\log\frac{c_\beta}{c_\alpha}
\qquad\text{or}\qquad 
\ba_i \ll b_i. 
\]
That is, for the purposes of this case, if we consider $\frac{c_\beta}{c_\alpha}$
to be a uniform constant (independent of $i$), we can write
$\ba_i \ladd b_i$, which means $\ba_i$ is in the first half of the interval $J_i$. 
Therefore, by \lemref{twisting} and \lemref{length}
\[
\twist_{\beta_i}(X_{\ba_i} , \gamma)  
\Hyp_{\ba_i}(\beta_i) = O(1) 
\quad \text{and}\quad
\width_{\ba_i}(\beta_i) \ladd \ba_i - \ub_i.
\]
If $c_\beta > 0$, then by Corollary~\ref{C:endpointvalues} and Lemma~\ref{L:interval locations} we have
$$ 
\ba_i - \ub_i \eadd \sum_{j=1}^{i} \log \frac {n_j}{m_{j-1}}+\frac 12\log\frac{c_\beta}{c_\alpha} \ladd \sum_{j=1}^i \log \frac{n_j}{m_{j-1}}
$$
where again we are taking $\frac{c_\beta}{c_\alpha}$ to be a uniform constant in the inequality $\ladd$ here.  Similarly, when $c_\beta = 0$, we have
\[ \ba_i - \ub_i \eadd \frac12 \left( \sum_{j=1}^i \log \frac{n_j}{m_{j-1}} - \log \frac{n_{i+1}}{m_i} \right) < \sum_{j=1}^i \log \frac{n_j}{m_{j-1}}.\]
Since the right-hand side of both of these inequalities tend to infinity, for any $c_\beta$ we have
\[ \width_{\ba_i}(\beta_i) \lmul \sum_{j=1}^i \log \frac{n_j}{m_{j-1}}.\]


Now, appealing to \lemref{length} and these computations, we see that for large enough $i$  
\begin{eqnarray}
\Hyp_{\ba_i} (\beta_i, \gamma) 
& \lmul &  \I(\gamma, \beta_i)  \Big[ \sum_{j=1}^{i} \log \frac {n_j}{m_{j-1}} +O(1) \Big] \label{E:starbeta}\\
& \emul & \I(\gamma, \bnu_\beta) \I(\mu, \beta_i)  \sum_{j=1}^{i} \log \frac {n_j}{m_{j-1}} \notag \\  
&\emul  & \I(\gamma, \bnu_\beta) \prod_{j=1}^{i-1} 2m_j  \sum_{j=1}^{i} \log \frac {n_j}{m_{j-1}}.  \notag 
\end{eqnarray}

Note that for any  curve $\gamma$, the ratio $\I(\gamma, \bnu_\alpha) / \I(\gamma, \bnu_\beta)$ is a fixed
number independent of $i$, and the sum $\sum_{j=1}^{i} \log \frac{n_j}{m_{j-1}}$ 
is negligible compared to the product $\prod_{j=1}^i \frac{n_j}{m_{j-1}}$.
Therefore, dividing the above estimate by \eqref{E:star}, we obtain
$$
\lim_{i \to \infty} 
\frac{\Hyp_{\ba_i}(\beta_i, \gamma)} 
{\Hyp_{\ba_i}(\alpha_i, \gamma) } = 0.
$$
Clearly,  the above holds for any other simple closed curve $\gamma'$, and so does the estimate in \eqref{E:star}.  
Thus by Theorem~\ref{T:hyperbolic-contribution} and appealing to $\displaystyle{ \lim_{i \to \infty} \frac{\Hyp_{\ba_i}(\beta_i, \gamma)} {\Hyp_{\ba_i}(\alpha_i, \gamma) } = 0}$ and \eqref{E:star} we have
\begin{eqnarray*}
\lim_{i \to \infty} \frac{\Hyp_{\ba_i}(\gamma)}{\Hyp_{\ba_i}(\gamma')} &= &  \lim_{i \to \infty} \frac{\Hyp_{\ba_i}(\alpha_i,\gamma) + \Hyp_{\ba_i}(\beta_i,\gamma) + O( \I(\alpha_i,\gamma) + \I(\beta_i,\gamma))}{\Hyp_{\ba_i}(\alpha_i,\gamma') + \Hyp_{\ba_i}(\beta_i,\gamma') + O( \I(\alpha_i,\gamma') + \I(\beta_i,\gamma'))}\\
& = & \lim_{i \to \infty} \frac{\Hyp_{\ba_i}(\alpha_i,\gamma)}{\Hyp_{\ba_i}(\alpha_i,\gamma')}\\
& = & \lim_{i \to \infty}  
  \frac{\I(\gamma, \alpha_i)(O(1) + n_i\Hyp_{\ba_i}(\alpha_i))}{\I(\gamma', \alpha_i)(O(1)+n_i\Hyp_{\ba_i}(\alpha_i))}
= \frac{\I(\gamma, \bnu_\alpha)}{\I(\gamma', \bnu_\alpha)}
\end{eqnarray*}

This implies that 
$$
X_{\ba_i} \to \bnu_\alpha
$$
in $\PML(S)$, completing the proof in this case.
\end{proof}

\subsection*{Case 2.} 
Assume now that  $c_\alpha=0$.

\begin{proof}
As above, let $i$ be big enough so that 
$\{n_i\}$ and $\{m_i\}$ satisfy inequalities in \eqref{tagG1} and \eqref{tagG2} and so that
$$
\I(\gamma, \bnu_\alpha) \emul \frac{\I(\gamma, \alpha_i)}{\I(\mu, \alpha_i)}
\qquad\text{and}\qquad
\I(\gamma, \bnu_\beta) \emul \frac{\I(\gamma, \beta_i)}{\I(\mu, \beta_i)}.
$$
The curve $\alpha_i$ might still be very short at $\ba_i$,  in which case
its contribution to the length of $\gamma$ would not be maximal at $\ba_i$.

Hence we will estimate the length of $\gamma$ at $a_{i,i+1}\in[\ba_i,\ua_{i+1}]$, 
(see \lemref{length}) when the hyperbolic length of $\alpha_i$ and $\alpha_{i+1}$
are both comparable to $1$. By a computation essentially the same as in \eqref{E:star}, 
the contribution from $\alpha_i$ to the length of  the curve $\gamma$ is

\begin{equation} \label{E:starstar}
\Hyp_{a_{i,i+1}} (\alpha_i, \gamma) 
 \emul \I(\gamma, \bnu_\alpha) \prod_{j=1}^i 2n_j.  
\end{equation}
We now estimate the contribution of $\beta_i$ at the time $a_{i,i+1}$. 
Note that
$[\ba_i,\ua_{i+1}]\subset [b_i,\bb_i]$ by \lemref{Times_zero}, so $\beta_i$
is short at this moment and its length is increasing at most exponentially.
But $\Hyp_{b_i}(\beta_i) \emul 1/m_i$. Hence, 
\[
\Hyp_{a_{i,i+1}}(\beta_i)\lmul \frac{e^{\ua_{i+1}-b_i}}{m_i}.
\]
The width of $\beta_i$ is bounded above by $\log m_i$ for any value of $t$.
By \lemref{twisting} and  \eqnref{markm},
 \begin{align*}
\Hyp_{a_{i,i+1}}(\beta_i,\gamma)
& \lmul\I(\gamma,\beta_i)\left(\log m_i+e^{\ua_{i+1}-b_i}\right)\\
& \emul  \I(\gamma, \bnu_\beta) \prod_{j=1}^{i-1} 2m_j \left(\log m_i+e^{\ua_{i+1}-b_i}\right)
\end{align*}
 which, together with \eqref{E:starstar} and \lemref{slow} implies that 
$$
\lim_{i \to \infty} 
\frac{\Hyp_{a_{i, i+1}}(\beta_i, \gamma)} 
{\Hyp_{a_{i, i+1}}(\alpha_i, \gamma) } = 0.
$$
Repeating the argument at the end of the Case 1, 
we conclude that  the projective class of $\bnu_\alpha$ is in the limit set.  
This  completes the proof in this case.
\end{proof}
As both cases exhaust the possibilities, this completes the proof of the theorem.
\end{proof}
  \bibliographystyle{alpha}
  \bibliography{main}

\begin{thebibliography}{CMW14}

\bibitem[AC13]{athreya-chaika:HD}
J.~Athreya and J.~Chaika.
\newblock The hausdorff dimension of non-uniquely ergodic directions in
  ${H}(2)$ is almost everywhere $1/2$.
\newblock preprint, 2013.

\bibitem[BH99]{bridson:NPC}
M.R. Bridson and A.~Haefliger.
\newblock {\em Metric spaces of non-positive curvature}, volume 319 of {\em
  Grundlehren der Mathematischen Wissenschaften [Fundamental Principles of
  Mathematical Sciences]}.
\newblock Springer-Verlag, Berlin, 1999.

\bibitem[BM14]{brockmodami}
J.~{Brock} and B.~{Modami}.
\newblock Recurrent weil-petersson geodesic rays with non-uniquely ergodic
  ending laminations.
\newblock preprint, {\tt arXiv:1409.1562}, 2014.

\bibitem[Bro]{brock:CF}
J.F. Brock.
\newblock Chains of flats, ending laminations, and unique ergodicity.
\newblock in preparation.

\bibitem[Bus92]{buser:GSC}
P.~Buser.
\newblock {\em Geometry and spectra of compact {R}iemann surfaces}, volume 106
  of {\em Progress in Mathematics}.
\newblock Birkh\"auser Boston Inc., Boston, MA, 1992.

\bibitem[CE07]{eskin:UE}
Y.~Cheung and A.~Eskin.
\newblock Unique ergodicity of translation flows.
\newblock In {\em Partially hyperbolic dynamics, laminations, and
  {T}eichm\"uller flow}, volume~51 of {\em Fields Inst. Commun.}, pages
  213--221. Amer. Math. Soc., Providence, RI, 2007.

\bibitem[Che03]{cheung:HD}
Y.~Cheung.
\newblock Hausdorff dimension of the set of nonergodic directions.
\newblock {\em Ann. of Math. (2)}, 158(2):661--678, 2003.
\newblock With an appendix by M. Boshernitzan.

\bibitem[CHM08]{masur:TD}
Y.~Cheung, P.~Hubert, and H.~Masur.
\newblock Topological dichotomy and strict ergodicity for translation surfaces.
\newblock {\em Ergodic Theory Dynam. Systems}, 28(6):1729--1748, 2008.

\bibitem[CM06]{masur:MN}
Y.~Cheung and H.~Masur.
\newblock Minimal non-ergodic directions on genus-2 translation surfaces.
\newblock {\em Ergodic Theory Dynam. Systems}, 26(2):341--351, 2006.

\bibitem[CMW14]{chaikamasurwolf:IP}
J.~Chaika, H.~Masur, and M.~Wolf.
\newblock Limits in {P}{M}{F} of {T}eichm\"uller geodesics.
\newblock preprint, {\tt arXiv:1406.0564}, 2014.

\bibitem[CRS08]{rafi:LT}
Y.~Choi, K.~Rafi, and C.~Series.
\newblock Lines of minima and {T}eichm\"uller geodesics.
\newblock {\em Geom. Funct. Anal.}, 18(3):698--754, 2008.

\bibitem[Gab09]{gabai:AF}
David Gabai.
\newblock Almost filling laminations and the connectivity of ending lamination
  space.
\newblock {\em Geom. Topol.}, 13(2):1017--1041, 2009.

\bibitem[GL00]{gardiner:QT}
F.~P. Gardiner and N.~Lakic.
\newblock {\em Quasiconformal {T}eichm\"uller theory}, volume~76 of {\em
  Mathematical Surveys and Monographs}.
\newblock American Mathematical Society, Providence, RI, 2000.

\bibitem[Ham06]{hamenstad:GB}
Ursula Hamenst{\"a}dt.
\newblock Train tracks and the {G}romov boundary of the complex of curves.
\newblock In {\em Spaces of {K}leinian groups}, volume 329 of {\em London Math.
  Soc. Lecture Note Ser.}, pages 187--207. Cambridge Univ. Press, Cambridge,
  2006.

\bibitem[Har81]{harvey:BS}
W.J. Harvey.
\newblock Boundary structure of the modular group.
\newblock In {\em Riemann surfaces and related topics: Proceedings of the 1978
  Stony Brook Conference (State Univ. New York, Stony Brook, N.Y., 1978)},
  volume~97 of {\em Ann. of Math. Stud.}, pages 245--251, Princeton, N.J.,
  1981. Princeton Univ. Press.

\bibitem[HM79]{masur:QDF}
J.~Hubbard and H.A. Masur.
\newblock Quadratic differentials and foliations.
\newblock {\em Acta Math.}, 142(3-4):221--274, 1979.

\bibitem[Hub06]{hubbard:TT}
J.~Hubbard.
\newblock {\em {T}eichm\"uller theory and applications to geometry, topology
  and dynamics}.
\newblock Matric Edition, Ithaca, NY, 2006.

\bibitem[Kea77]{keane:NI}
M.~Keane.
\newblock Non-ergodic interval exchange transformations.
\newblock {\em Israel J. Math.}, 26(2):188--196, 1977.

\bibitem[Kla99]{klarreich:bc}
E.~Klarreich.
\newblock The boundary at infinity of the curve complex and the relative
  {T}eichm\"uller space.
\newblock preprint, 1999.

\bibitem[KN76]{newton:NUE}
H.B. Keynes and D.~Newton.
\newblock A ``minimal", non-uniquely ergodic interval exchange transformation.
\newblock {\em Math. Z.}, 148(2):101--105, 1976.

\bibitem[Len08]{lenzhen:GL}
A.~Lenzhen.
\newblock {T}eichm\"uller geodesics that do not have a limit in
  {${\mathcal{PMF}}$}.
\newblock {\em Geom. Topol.}, 12(1):177--197, 2008.

\bibitem[Lev83]{levitt:FL}
G.~Levitt.
\newblock Foliations and laminations on hyperbolic surfaces.
\newblock {\em Topology}, 22(2):119--135, 1983.

\bibitem[LM10]{lenzhen:CD}
A.~Lenzhen and H.~Masur.
\newblock Criteria for the divergence of pairs of {T}eichm\"uller geodesics.
\newblock {\em Geom. Dedicata}, 144:191--210, 2010.

\bibitem[LRT12]{rafi:BC}
A.~Lenzhen, K.~Rafi, and J.~Tao.
\newblock Bounded combinatorics and the {L}ipschitz metric on {T}eichm\"uller
  space.
\newblock {\em Geom. Dedicata}, 159:353--371, 2012.

\bibitem[Mas82]{masur:TB}
H.A. Masur.
\newblock Two boundaries of {T}eichm\"uller space.
\newblock {\em Duke Math. J.}, 49(1):183--190, 1982.

\bibitem[Mas92]{masur:HD}
H.~Masur.
\newblock Hausdorff dimension of the set of nonergodic foliations of a
  quadratic differential.
\newblock {\em Duke Math. J.}, 66(3):387--442, 1992.

\bibitem[MM99]{minsky:CCI}
H.A. Masur and Y.N. Minsky.
\newblock Geometry of the complex of curves. {I}. {H}yperbolicity.
\newblock {\em Invent. Math.}, 138(1):103--149, 1999.

\bibitem[MM00]{minsky:CCII}
H.A. Masur and Y.~N. Minsky.
\newblock Geometry of the complex of curves. {II}. {H}ierarchical structure.
\newblock {\em Geom. Funct. Anal.}, 10(4):902--974, 2000.

\bibitem[MS91]{masur:HDN}
H.~Masur and J.~Smillie.
\newblock Hausdorff dimension of sets of nonergodic measured foliations.
\newblock {\em Ann. of Math. (2)}, 134(3):455--543, 1991.

\bibitem[MT02]{masur:RB}
H.~Masur and S.~Tabachnikov.
\newblock Rational billiards and flat structures.
\newblock In {\em Handbook of dynamical systems, {V}ol.\ 1{A}}, pages
  1015--1089. North-Holland, Amsterdam, 2002.

\bibitem[PH92]{penner:TT}
R.C. Penner and J.L. Harer.
\newblock {\em Combinatorics of train tracks}, volume 125 of {\em Annals of
  Mathematics Studies}.
\newblock Princeton University Press, Princeton, NJ, 1992.

\bibitem[Raf05]{rafi:SC}
K.~Rafi.
\newblock A characterization of short curves of a {T}eichm\"uller geodesic.
\newblock {\em Geometry and Topology}, 9:179--202, 2005.

\bibitem[Raf07a]{rafi:CM}
K.~Rafi.
\newblock {A combinatorial model for the {T}eichm\"uller metric}.
\newblock {\em Geom. Funct. Anal.}, 17(3):936--959, 2007.

\bibitem[Raf07b]{rafi:TT}
K.~Rafi.
\newblock Thick-thin decomposition for quadratic differentials.
\newblock {\em Math. Res. Lett.}, 14(2):333--341, 2007.

\bibitem[Raf14]{rafi:HT}
Kasra Rafi.
\newblock Hyperbolicity in {T}eichm\"uller space.
\newblock {\em Geom. Topol.}, 18(5):3025--3053, 2014.

\bibitem[Sat75]{sataev:NIM}
E.A. Sataev.
\newblock The number of invariant measures for flows on orientable surfaces.
\newblock {\em Izv. Akad. Nauk SSSR Ser. Mat.}, 39(4):868--878, 1975.

\bibitem[Thu86]{thurston:GT}
W.P. Thurston.
\newblock Geometry and topology of $3$--manifolds.
\newblock Princeton University Lecture Notes, online at
  http://www.msri.org/publications/books/gt3m, 1986.

\bibitem[Tre14]{trevino:RATE}
Rodrigo Trevi{\~n}o.
\newblock On the ergodicity of flat surfaces of finite area.
\newblock {\em Geom. Funct. Anal.}, 24(1):360--386, 2014.

\bibitem[Vee69]{veech:SE}
W.A. Veech.
\newblock Strict ergodicity in zero dimensional dynamical systems and the
  {K}ronecker-{W}eyl theorem {${\rm mod}\ 2$}.
\newblock {\em Trans. Amer. Math. Soc.}, 140:1--33, 1969.

\bibitem[Wol79]{wolpert:LS}
S.~A. Wolpert.
\newblock The length spectra as moduli for compact {R}iemann surfaces.
\newblock {\em Ann. of Math. (2)}, 109(2):323--351, 1979.

\end{thebibliography}

  \end{document}